\newtheorem{theo}{Theorem}[section]
\newtheorem{lem}[theo]{Lemma}
\newtheorem{claim}[theo]{Claim}
\newtheorem{prop}[theo]{Proposition}
\newtheorem{coro}[theo]{Corollary}
\newtheorem{Definition}[theo]{Definition}
\newcommand{\ds}{\displaystyle}
\newcommand{\fig}[3]{\begin{figure}[h!]\begin{center}\includegraphics[#1]{#2}\end{center}\caption{#3}\label{fig:#2}\end{figure}}
\DeclareMathOperator{\ONE}{\mathds{1}}
\newcommand{\al}{\alpha}
\newcommand{\be}{\beta}
\newcommand{\eps}{\epsilon}
\newcommand{\Om}{\Omega}
\newcommand{\siM}{M^{\star}}
\newcommand{\siN}{N^{\star}}
\newcommand{\NN}{\mathbb{N}}
\newcommand{\ZZ}{\mathbb{Z}}
\def\ni{\noindent}
\def\hPhi{\widehat{\Phi}}
\def\Cat{\textrm{Cat}}
\def\Eo{E^\circ}
\def\Vo{V^\circ}
\def\cA{\mathcal{A}}
\def\cB{\mathcal{B}}
\def\cC{\mathcal{B}}
\def\cD{\mathcal{D}}
\def\cE{\mathcal{E}}
\def\cG{\mathcal{G}}
\def\cQ{\mathcal{Q}}
\def\cO{\mathcal{O}}
\def\cF{\mathcal{F}}
\def\cT{\mathcal{T}}
\def\cU{\mathcal{U}}
\def\cV{\mathcal{V}}
\def\cW{\mathcal{W}}
\def\tF{\widetilde{F}}
\def\tA{\widetilde{A}}
\def\cEd{\cE_d}
\def\cDd{\cD_d}
\def\cTd{\cT_d}
\def\cAda{\cA_d^{(a)}}
\def\cGda{\cG_d^{(a)}}
\def\cAdd{\cA_d^{(d)}}
\def\cBdd{\cB_d^{(d)}}
\def\cUd{\cU_d}
\def\cUdv{\vec{\cU}_d}
\def\cVda{\cV_d^{(a)}}
\def\cVdav{\vec{\cV}_d^{(a)}}
\def\vcAdd{\vec{\cA}_d^{(d)}}
\def\vcAda{\vec{\cA}_d^{(a)}}
\def\vcBda{\vec{\cB}_d^{(a)}}
\def\vAda{{A}_d^{(a)}}
\def\cBda{\cB_d^{(a)}}
\def\bgirth{internal\ girth}
\def\cTqua{\cT_{\raisebox{-1pt}{\scriptsize$\Diamond$}}}
\def\cTtri{\cT_{\vartriangle}}
\def\cDqua{\cD_{\!\Diamond}}
\def\cDtri{\cD_{\vartriangle}}
\def\cUqua{\cU_{\raisebox{-1pt}{\scriptsize$\Diamond$}}}
\def\cUquav{\vec{\cU}_{\raisebox{-1pt}{\scriptsize$\Diamond$}}}
\def\cUtri{\cU_{\vartriangle}}
\def\cVqua{\cV_{\raisebox{-1pt}{\!\scriptsize$\Diamond$}}}
\def\cVquav{\vec{\cV}_{\raisebox{-1pt}{\!\scriptsize$\Diamond$}}}
\def\cVtri{\cV_{\vartriangle}}
\def\cVtriv{\vec{\cV}_{\vartriangle}}
\def\Uqua{U_{\raisebox{-1pt}{\!\scriptsize$\Diamond$}}}
\def\Utri{U_{\vartriangle}}
\def\Ud{U_{d}}
\def\Vqua{V_{\raisebox{-1pt}{\!\scriptsize$\Diamond$}}}
\def\Vtri{V_{\vartriangle}}
\def\Vd{V_{d}}
\def\vA{\vec{A}}
\def\vB{\vec{B}}
\newcommand{\vAqua}[1]{\vA_{\Diamond}^{\raisebox{-2pt}{\scriptsize$(#1)$}}}
\newcommand{\vAtri}[1]{\vA_{\vartriangle}^{\raisebox{-2pt}{\scriptsize$(#1)$}}}
\newcommand{\vBtri}[1]{\vB_{\vartriangle}^{\raisebox{-2pt}{\scriptsize$(#1)$}}}
\def\eps{\epsilon}
\def\hcQ{\widehat{\cQ}}
\def\hcT{\widehat{\cT}}
\def\cOw{\widehat{\cO}}
\def\cV{\mathcal{V}}
\def\wO{\widehat{\cO}}
\def\wB{\widehat{\cB}}
\def\wOd{\wO_d}
\def\wBd{\wB_d}
\newcommand{\Qu}[1]{\cD_{\!\Diamond}^{(#1)}}
\newcommand{\Qb}[1]{\overline{\cD}_{\!\Diamond }^{\raisebox{-2pt}{\scriptsize$(#1)$}}}
\newcommand{\Qbb}[1]{\cA_{\Diamond}^{(#1)}}
\newcommand{\Qvv}[1]{\vec{\cA}_{\Diamond}^{\raisebox{-2pt}{\scriptsize$(#1)$}}}
\newcommand{\Qbbr}[1]{\cB_{\Diamond}^{(#1)}}
\newcommand{\Qvvr}[1]{\vec{\cB}_{\Diamond}^{\raisebox{-2pt}{\scriptsize$(#1)$}}}
\newcommand{\Tr}[1]{\cD_{\!\vartriangle}^{(#1)}}
\newcommand{\Tb}[1]{\overline{\cD}_{\!\vartriangle}^{\raisebox{-2pt}{\scriptsize$(#1)$}}}
\newcommand{\Tbb}[1]{\cA_{\vartriangle}^{(#1)}}
\newcommand{\Tvv}[1]{\vec{\cA}_{\vartriangle}^{\raisebox{-2pt}{\scriptsize$(#1)$}}}
\newcommand{\Tbbr}[1]{\cB_{\!\vartriangle}^{(#1)}}
\newcommand{\Tvvr}[1]{\vec{\cB}_{\!\vartriangle}^{\raisebox{-2pt}{\scriptsize$(#1)$}}}
\begin{document}

\title[Bijections for planar maps with boundaries]{Bijections for planar maps with boundaries}
%\title[A bijective approach for maps with boundaries]{A bijective approach for maps with boundaries: triangulations, quadrangulations, and some more}
%\title[A bijective proof of Krikun's formula]{A bijective proof of Krikun's formula for triangulations with boundaries, and a quadrangulation analogue}

\author[O. Bernardi and \'E. Fusy]{Olivier Bernardi$^{*}$ \and \'{E}ric Fusy$^{\dagger}$}
\thanks{$^{*}$Department of Mathematics, Brandeis University, Waltham MA, USA,
bernardi@brandeis.edu.\\
\phantom{1}\ \ \ \!$^{\dagger}$LIX, \'Ecole Polytechnique, Palaiseau, France, fusy@lix.polytechnique.fr.}

\date{\today}

%% OB. Abstract to be updated.

\begin{abstract}
We present bijections for planar maps with boundaries. In particular, we obtain bijections for triangulations and quadrangulations of the sphere with boundaries of prescribed lengths. For triangulations we recover the beautiful factorized formula obtained by Krikun using a (technically involved) generating function approach. The analogous formula for quadrangulations is new. 
%In particular, we present bijections for triangulations with boundaries of specified lengths, which implies the beautiful factorized formula obtained by Krikun using a (technically involved) generating function approach. We also present a (new) analogue for quadrangulations, and a few additional results. 
%In 2007 Krikun gave a beautiful factorized formula for the number of triangulations of the sphere with boundaries of prescribed lengths, which he proved using a guessing/checking approach. We give here a combinatorial proof of this formula, and an analogue for quadrangulation.
We also obtain a far-reaching generalization for other face-degrees. In fact, all the known enumerative formulas for maps with boundaries are proved bijectively in the present article (and several new formulas are obtained).

Our method is to show that maps with boundaries can be endowed with certain ``canonical'' orientations, making them amenable to the master bijection approach we developed in previous articles. As an application of our enumerative formulas, we note that they provide an exact solution of the dimer model on rooted triangulations and quadrangulations. 
%, a method adapted here to the context of maps with boundaries. 
\end{abstract}

\maketitle

\section{Introduction}
In this article, we present bijections for planar maps with boundaries. Recall that a \emph{planar map} is a decomposition of the 2-dimensional sphere into vertices, edges, and faces, considered up to continuous deformation (see precise definitions in Section~\ref{sec:master-bij}). We deal exclusively with \emph{planar} maps in this article and call them simply \emph{maps} from now on. A \emph{map with boundaries} is a map with a set of distinguished faces called \emph{boundary faces} which are pairwise vertex-disjoint, and have simple-cycle contours (no pinch points). 
We call \emph{boundaries} the contours of the boundary faces.
%% OB. Shouldn't we simply call ``boundary'' the boundary faces (instead of their contour)?
We can think of the boundary faces as holes in the sphere, and maps with boundaries as a decomposition of a sphere with holes into vertices, edges and faces. A \emph{triangulation with boundaries} (resp. \emph{quadrangulation with boundaries}) is a map with boundaries such that every non-boundary face has degree 3 (resp. 4).

The main results obtained in this article are bijections for triangulations and quadrangulations with boundaries. The bijection establishes a correspondence between these maps and certain types of plane trees. This, in turns, easily yields factorized enumeration formulas with control on the number and lengths of the boundaries. In the case of triangulations, the enumerative formula had been established by Krikun~\cite{Kri} (by a technically involved ``guessing/checking'' generating function approach). The case of quadrangulations is new. We also present a far-reaching generalization for maps with other face-degrees.

The strategy we apply is to adapt to maps with boundaries the ``master bijection'' approach we developed in \cite{BeFu12,BeFu12b} for maps without boundaries. Roughly speaking, this strategy reduces the problem of finding bijections, to the problem of exhibiting canonical orientations characterizing these classes of maps. 
%OB. For next version: We also discuss some other results obtainable with the same strategy.

\fig{width=0.7\linewidth}{examples_maps}{Left: a quadrangulation in $\cQ[3;4,2,6]$. Right: a triangulation in $\cT[3;2,1,3]$.}

Let us now state the enumerative formulas derived from our bijections for triangulations and quadrangulations. We call a map with boundaries  \emph{multi-rooted} if the $r$ boundary faces are labeled with distinct numbers 
in $[r]=\{1,\ldots,r\}$, and each one has a marked corner; see Figure~\ref{fig:examples_maps}. For $m\geq 0$ and $a_1,\ldots,a_r$ positive integers, we denote $\cT(m;a_1,\ldots,a_r)$ (resp. $\cQ(m;a_1,\ldots,a_r)$) the set of multi-rooted triangulations (resp. quadrangulations) with $r$ boundary faces, and $m$ internal vertices (vertices not on the boundaries), such that the boundary labeled $i$ has length $a_i$ for all $i\in[r]$.
In 2007 Krikun proved the following result:
\begin{theo}[Krikun~\cite{Kri}]\label{theo:krikun_triang}
For $m\geq 0$ and $a_1,\ldots,a_r$ positive integers,
\begin{equation}\label{eq:krikun_triang}
|\cT[m;a_1,\ldots,a_r]|=\frac{4^k(e-2)!!}{m!(2b+k)!!}\prod_{i=1}^r a_i\binom{2a_i}{a_i},
\end{equation}
where $b:=\sum_{i=1}^r a_i$ is the total boundary length, $k:=r+m-2$, and $e=2b+3k$ is the number of edges (and the notation $n!!$ stands for $\prod_{i=0}^{\lfloor (n-1)/2\rfloor}(n-2i)$). 
\end{theo}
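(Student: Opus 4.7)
\medskip
\noindent\textbf{Proof plan.} My plan is to follow the \emph{master bijection} strategy of \cite{BeFu12,BeFu12b}, which reduces the theorem to three stages: (i) identify a canonical $\alpha$-orientation on maps in $\cT[m;a_1,\ldots,a_r]$; (ii) feed these oriented maps through the master bijection to obtain a class of decorated plane trees; (iii) count the trees.

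For stage (i), I would define an $\alpha$-orientation on a triangulation $T\in\cT[m;a_1,\ldots,a_r]$ as an orientation of its edges in which every internal vertex has outdegree exactly $3$, each boundary cycle is coherently oriented with its boundary face on the right, and the outdegrees of the boundary vertices obey a prescribed local rule $\alpha$. An Euler-type computation based on $e=2b+3k$ will confirm that the total prescribed outdegree matches the edge count, so $\alpha$-orientations are compatible on average. To pin down a \emph{canonical} orientation I would invoke the standard distributive-lattice structure on $\alpha$-orientations and select the unique minimum with respect to counterclockwise reorientation of internal cycles. The main obstacle of this stage is proving that at least one $\alpha$-orientation exists on every such $T$; this is the technical heart of the argument, and I would handle it via a sink/potential argument adapted to the multi-boundary setting, in the spirit of Schnyder-type decompositions of simply connected triangulations.

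For stage (ii), I would designate the boundary labeled $1$ as the outer face and apply the master bijection of \cite{BeFu12} to canonically oriented triangulations. By the general results of that framework this should produce a bijection between $\cT[m;a_1,\ldots,a_r]$ and an explicit family $\cF[m;a_1,\ldots,a_r]$ of decorated plane trees whose local structure records the outdegree constraints: each internal vertex of $T$ yields a node of prescribed trivalent type, each non-outer boundary yields a cyclic chain of dangling subtrees grafted along a decorated ring of length $a_i$, and the outer boundary contributes a fixed decoration at the root.

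For stage (iii), I expect the enumeration of $\cF[m;a_1,\ldots,a_r]$ to factor neatly over the boundaries. Each boundary of length $a_i$ should contribute the factor $a_i\binom{2a_i}{a_i}$: the binomial coefficient from an encoding of its cyclic sequence of dangling subtrees by a Dyck-like walk of length $2a_i$, and the linear factor from the marked corner. The remaining global factor $\frac{4^k(e-2)!!}{m!(2b+k)!!}$ should drop out of the enumeration of the trivalent tree skeleton on $k$ internal nodes and $e$ half-edges, either via Lagrange inversion on the corresponding generating function or by a direct bijective manipulation involving double factorials. In my experience the most delicate verification is to track how $(2b+k)!!$ and $m!$ arise together from the labeling of the $r$ boundaries and from the sequential attachment of the boundary rings to the ternary skeleton; getting this last combinatorial identity exactly right is where I would expect to spend the bulk of the bookkeeping.
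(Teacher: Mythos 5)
Your three-stage plan correctly identifies the general philosophy (canonical orientations, master bijection, tree counting), but two of its concrete steps would fail as stated, and these are precisely where the paper's real work lies.

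First, the orientation in stage (i) is not the right object. For triangulations of girth $1$ there is no plain $\alpha$-orientation with ``outdegree $3$ at internal vertices'' underlying the bijection; the paper instead uses weighted \emph{biorientations} ($\ZZ$-biorientations) in which internal edges carry total weight $-1$ and may be $0$-way, internal vertices have weight $1$, and internal faces have weight $-2$ (the $d/(d-2)$-orientations with $d=1$). The negative weights and $0$-way edges are unavoidable here, and the existence/uniqueness proof is not a routine sink/potential argument: it passes through an edge-subdivided auxiliary map, $b$-regular orientations of a star map, a parity lemma, and a transfer lemma (Section~\ref{sec:proofs}). Second, and more seriously, stage (ii) as you describe it --- take the boundary labeled $1$ as the outer face and apply the master bijection directly --- only works when that boundary has length $1$ (Section~\ref{sec:triang_1}). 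For a boundary of arbitrary length $a$, the relevant canonical orientation exists only for \emph{reduced} maps (Proposition~\ref{prop4}), so the paper must first mark a half-edge, open it into an internal face of degree $1$ (Lemma~\ref{lem:edge_blow_tri}), and then cut the map along the \emph{maximal blocking cycle} into two pieces $M_1\in\Tbb{1}$ and $M_2\in\Tbbr{a}$, applying the master bijection separately to each with different choices of outer face (Lemma~\ref{lem:decompose-Tvv} and Theorem~\ref{theo:1annulardiss}). This two-piece decomposition, together with the resulting division by $2e$ for the marked half-edge, is the key structural idea your proposal is missing; without it the bijection of stage (ii) does not exist. Your stage (iii) is closer in spirit to the paper (the factor $\binom{2a_i}{a_i}$ does come from placing $a_i$ legs around a white mobile vertex of degree $a_i+1$, and the double factorials do come out of Lagrange inversion applied to $S=t\phi(S)$ with $\phi(y)=(1-8\sum_i z_i\binom{2i}{i}y^{2i+1})^{-1/2}$), but it cannot be executed until the correct tree family is in hand.
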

We obtain a bijective proof of this result, and also prove the following analogue:
\begin{theo}\label{theo:krikun_quad}
For $m\geq 0$ and $a_1,\ldots,a_r$ positive integers, 
\begin{equation}\label{eq:krikun_quad}
|\cQ[m;2a_1,\ldots,2a_r]|=\frac{3^k(e-1)!}{m!(3b+k)!}\prod_{i=1}^r2a_i\binom{3a_i}{a_i}, 
\end{equation}
where $b:=\sum_{i=1}^r a_i$ is the half-total boundary length, $k:=r+m-2$, and $e=3b+2k$ is the number of edges. 
\end{theo}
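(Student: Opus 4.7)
The plan is to adapt the master-bijection strategy announced in the introduction to quadrangulations with boundaries; the proof parallels that of Theorem~\ref{theo:krikun_triang}. First, one checks that the parameters in the theorem are internally consistent: an element of $\cQ[m;2a_1,\ldots,2a_r]$ has $v=m+2b$ vertices and a certain number of non-boundary faces of degree $4$, hence $e=3b+2k$ by the handshake lemma, and Euler's formula forces $k=r+m-2$.

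Step one is to endow every quadrangulation with boundaries with a canonical orientation. Concretely, I would introduce the analogue of the classical $2$-orientation known for bounded quadrangulations: each internal vertex has out-degree $2$, while boundary vertices and boundary edges receive out-degrees dictated by a global rule depending only on the boundary lengths $2a_i$. Existence is a flow-feasibility statement (the prescribed out-degrees must sum to $e$, which they do because of the identity $e=3b+2k$), and uniqueness is obtained by singling out the minimal $\alpha$-orientation (no clockwise circuit) inside the Propp lattice of $\alpha$-orientations.

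Step two is to feed this canonically oriented map into the master bijection of \cite{BeFu12,BeFu12b}. Its image should be a decorated plane forest whose combinatorial structure is prescribed by the $\alpha$-orientation: each internal vertex produces a node with a fixed pattern of out-children (reflecting both the out-degree $2$ and the face-degree $4$ conditions), and each boundary of length $2a_i$ contributes a cyclic ``blossom'' of $3a_i$ half-edges among which $a_i$ are of a distinguished type. Step three is to count these decorated forests. The local count at each boundary is $2a_i\binom{3a_i}{a_i}$ by a shifted Fuss--Catalan / cycle-lemma argument (rooted cyclic sequences of $3a_i$ letters with $a_i$ of one type, equipped with a marked corner), and the global factor $\frac{3^{k}(e-1)!}{m!(3b+k)!}$ arises from assembling these $r$ blossoms with the $m$ internal vertices into a plane forest with the prescribed degree sequence, via a standard multinomial/Raney-type identity.

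The main obstacle is Step~1: identifying the correct out-degree condition at boundary vertices, verifying existence against the delicate compatibility with the boundary face-degrees, and pinning down uniqueness through the right minimality notion. Once this canonical orientation is in place, Steps~2 and~3 are a direct instance of the template of \cite{BeFu12,BeFu12b} and the final enumeration reduces to a routine application of the cycle lemma, exactly as in the triangulation case already handled by Theorem~\ref{theo:krikun_triang}.
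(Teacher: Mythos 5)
Your outline correctly identifies the general template (canonical orientations, master bijection, Lagrange-type counting), but it misses the central structural difficulty of the quadrangulation case, and it is precisely at that point that the argument as sketched would fail. The master bijection requires choosing an outer face and producing an orientation in a class $\wO_{\pm d}$ tied to that outer face; for a quadrangulation all of whose boundaries have length $2a_i>2$ there is no face of degree $2$ to serve as the outer face, and a single canonical orientation of the whole map of the required type does not exist in general. The paper's actual route is: first mark an edge (introducing the factor $e$, whence the $(e-1)!$ rather than $e!$ in the formula), open it into an internal face of degree $2$, and then \emph{cut the map into two pieces} along a canonically chosen ``maximal blocking cycle'' of length $2$ (Lemma~\ref{lem:decompose-Qvv}, giving $\Qvv{2a}\simeq \Qvv{2}\times\Qvvr{2a}$). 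Only the two pieces separately admit canonical $1$-orientations in $\wO_{-2}$ and $\wO_{2}$ respectively (Propositions~\ref{prop:Equad} and~\ref{prop2}), and indeed Proposition~\ref{prop2} shows that the piece containing the marked boundary admits such an orientation \emph{only if} it is reduced --- this is exactly the obstruction your one-shot application of the master bijection runs into. Your proposal also leaves Step~1 (existence and uniqueness of the orientation) entirely open while acknowledging it is the main obstacle; in the paper this occupies all of Section~\ref{sec:proofs} (star maps, regular orientations, the transfer lemma), so it cannot be waved through as ``a flow-feasibility statement plus minimality in the Propp lattice'' --- feasibility alone does not give membership in $\wO_{\pm 2}$ (accessibility and the boundary-edge conditions), and the correct local rule is a weighted biorientation with internal vertices of \emph{indegree} $1$, not an out-degree-$2$ orientation.

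On the counting side, your claim that each boundary of length $2a_i$ uniformly contributes a ``blossom'' counted by $2a_i\binom{3a_i}{a_i}$ via a cycle lemma is not how the factorization actually arises, and I do not see how to make it work directly. In the paper, an unmarked boundary of length $2i$ becomes a white mobile vertex of degree $i+1$ carrying $2i$ legs, contributing $\binom{3i}{i}$ to the generating function, and the factor $2a_i\binom{3a_i}{a_i}$ in~\eqref{eq:krikun_quad} only appears after (a) applying Lagrange inversion to $R=t\phi(R)$ to extract coefficients of $R^{3a-1}$, and (b) multiplying by $n_i!\,(2i)^{n_i}$ to pass from unlabelled, unrooted boundary faces to labelled ones with marked corners; the marked boundary contributes differently, via $(3a-1)\binom{3a-2}{a-1}=\tfrac23 a\binom{3a}{a}$. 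So the symmetric product form of the final answer is an output of the computation, not a local count you can read off boundary by boundary. As it stands the proposal is a plausible plan whose two hardest ingredients --- the two-piece decomposition that makes the master bijection applicable at all, and the existence/uniqueness of the canonical orientations --- are missing.
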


Equations~\eqref{eq:krikun_triang} and \eqref{eq:krikun_quad} are generalizations of classical formulas. Indeed, the doubly degenerate case $m=0$ and $r=1$ of~\eqref{eq:krikun_triang} gives the well-known \emph{Catalan formula} for the number of triangulations of a polygon without interior points $|\cT[0;a]|=\Cat(a-2)=\frac{(2a-4)!}{(a-1)!(a-2)!}$. Similarly, the case $m=0$ and $r=1$ of~\eqref{eq:krikun_quad} gives the \emph{2-Catalan formula} for the number of quadrangulations of a polygon without interior points $|\cQ[0;2a]|=\frac{(3a-3)!}{(a-1)!(2a-1)!}$. 
The case $r=1$, $a=1$ of~\eqref{eq:krikun_quad} is already non-trivial as it gives the well-known formula for the number of rooted quadrangulations with $m+2$ vertices (upon seeing the root-edge as blown into a boundary face of degree $2$):
$$|\cQ[m;2]|=\frac{2\cdot 3^m(2m)!}{m!(m+2)!}.$$
%% OB. We should add a reference to Tutte (and/or to the first bijective proof)
%while the case $r=1$, $a=1$ of~\eqref{eq:krikun_quad} gives the well-known formula for the number of rooted triangulations with $m+1$ vertices (upon seeing the root-edge as blown into a boundary face of degree $1$ inside a digon):
More generally, the case $r=1$ of~\eqref{eq:krikun_quad} yields
$$|\cQ[m;2a]|=\frac{3^{m-1}(3a+2m-3)!}{m!(3a+m-1)!}\frac{(3a)!}{(2a-1)!a!},$$ 
which is the formula given in~\cite[Eq.(2.12)]{bouttier2009distance} for the number of rooted quadrangulations with one simple boundary of length $2a$, and $m$ internal vertices. Similarly, the case $r=1$ of~\eqref{eq:krikun_triang} yields the formula for the number of rooted triangulations with one simple boundary of length $a$, but it seems that this formula was not known prior to \cite{Kri}.
Lastly, in Section~\ref{sec:counting} we use the special case of~\eqref{eq:krikun_triang} and \eqref{eq:krikun_quad} where all the boundaries have length 2 in order to solve the dimer model on triangulations and quadrangulations.

As a side remark, let us discuss the counterparts of 
\eqref{eq:krikun_triang} and~\eqref{eq:krikun_quad} when we remove the condition for the boundaries to be simple and pairwise disjoint. Let $\hcT[n;a_1,\cdots,a_r]$ (resp. $\hcQ[n;a_1,\ldots,a_r]$) be the set of maps with $n+r$ faces, $n$ faces of degree $3$ (resp. $4$) and
 $r$ distinguished faces labeled $1,\ldots,r$ of respective degrees $a_1,\ldots,a_r$, each having a marked corner. 
It is easy to deduce from Tutte's slicings formula~\cite{T62b} that 
$$
|\hcQ[n;2a_1,\ldots,2a_r]|=\frac{(e-1)!}{v!n!}3^n\prod_{i=1}^r2a_1\binom{2a_i-1}{a_i},
$$
where $v=n+2+\sum_{i=1}^r(a_i-1)$ is the total number of vertices, and $e=2n+\sum_{i=1}^r a_i$ is the total number of edges. 
However no factorized formula should exist for $|\hcT[n;a_1,\ldots,a_r]|$, since the formula for $r=1$ is already complicated~\cite{mathai1972enumeration}.

%%OB. Removed the following, and added something below.
%% Let us also mention that, for the case of triangulations with a single (simple) boundary, there are nice factorized formulas when adding a girth constraint (the girth is at most $3$ due to the non-boundary faces having degree $3$): in girth at least $2$ (loopless) a formula is given in~\cite{mullin1965counting} (with a bijective proof in~\cite{PS03a}), and in girth $3$ a formula is given in~\cite{Br} (with bijective proofs in~\cite{PS03b,BeFu12,albenque2013generic}). 

%% OB. Changed this paragraph (to put a more positive light on our results)
As mentioned above, we have also generalized our results to other face degrees. For these extensions, there is actually a necessary ``girth condition'' to take into account in order to obtain bijections. Precisely, we define a notion of \emph{internal girth} for plane maps with boundaries. The internal girth coincides with the girth\footnote{We recall that the girth of a graph $G$ is the length of a 
shortest cycle of edges in $G$.} when the map has at most one boundary (but can be larger than the girth in general). For any integer $d\geq 1$, we obtain a bijection for maps with boundaries having internal girth $d$, and non-boundary faces of degrees in  $\{d,d+1,d+2\}$ (with control on the number of faces of each degree). 
For $d=1$, the internal girth condition is void, and restricting the non-boundary faces to have degree $d+2=3$ gives our result for triangulations with boundaries.
For $d=2$, the internal girth condition is void for bipartite maps, and restricting the non-boundary faces to have degree $d+2=4$ gives our result for bipartite quadrangulations with boundaries. 
For the values of $d\geq 3$, the case of a single boundary with all the internal faces of degree $d$ corresponds to the results obtained in \cite{BeFu12} (bijections for $d$-angulations of girth $d\geq 3$ with at most one boundary). 
For $d=2$, the case of a single boundary with all the internal faces of degree $3$ gives a bijection for loopless triangulations (i.e. triangulations of girth at least 2) with a single boundary and we recover the counting formula of Mullin \cite{mullin1965counting}. Hence, our bijections cover the cases of triangulations with a single boundary with girth at least $d$, for $d\in\{1, 2,3\}$ (for girth 1 we give the first bijective proof, while for girth 2 the first bijective proof was given in \cite{PS03a} and for girth 3 it was given in~\cite{PS03b}, and generalized to $d$-angulations in~\cite{albenque2013generic}). Furthermore, in Theorem \ref{thm:krikun-analogues} we give generalizations of these results in the form of multivariate factorized counting formulas, analogous to Krikun formula \eqref{eq:krikun_triang}, for the  classes of triangulations of internal girth $d=2$ and $d=3$. 
Lastly,  we give  multivariate factorized counting formulas for the  classes of quadrangulations of internal girth $d=4$ thereby generalizing the formula of Brown~\cite{Br65} for simple quadrangulations with a single boundary. In fact, all the known counting formulas for maps with boundaries are proved bijectively in the present article.

%% We have also tried to generalize our results to arbitrary girth $d\geq 1$  
%% (with control on the boundary lengths and on the face degrees) 
%% and have met a few technical obstacles toward this goal.
%% It turns out that the parameter we can control (in the presence of multiple boundaries)
%% is a weaker notion of girth which we call \emph{internal girth}
%% (it coincides with the girth when there is only one boundary,
%% but is in general smaller when there are several boundaries,
%% while still being a lower bound on the face degrees). 
%% Another obstacle is that, in internal girth $d\geq 1$, 
%% we have not been able to find bijections for
%% faces of arbitrary degree $\geq d$, but just for faces of degrees
%% in $\{d,d+1,d+2\}$. This still unifies and generalizes the results
%% obtained 
%% for triangulations (case $d=1$, with faces of degree $3=d+2$) and bipartite
%% quadrangulations (case $d=2$ with faces of degree $4=d+2$ and even boundary lengths, the loopless property following from the fact that the map is bipartite).  

This article is organized as follows. In Section~\ref{sec:master-bij} we set our definitions about maps, and adapt the \emph{master bijection} established in \cite{BeFu12} to maps with boundaries. In Section~\ref{sec:bij-quad}, we define canonical orientations for quadrangulations with boundaries, and obtain a bijection with a class of trees called \emph{mobiles} (the case where at least one boundary has size 2 is simpler, while the general case requires to first cut the map into two pieces). In Section~\ref{sec:bij-triang} we treat similarly the case of triangulations. In Section~\ref{sec:counting}, we count mobiles and obtain \eqref{eq:krikun_triang} and \eqref{eq:krikun_quad}. We also derive
from our formulas (both for coefficients and generating functions) exact solutions of the dimer model on rooted quadrangulations and triangulations. 
In Section~\ref{sec:girth} we unify and extend the results (orientations, bijections, and enumeration) to more general face-degree conditions. 
In Section~\ref{sec:proofs}, we prove the existence and uniqueness of the needed canonical orientations for maps with boundaries. Lastly, in Section~\ref{sec:extension}, we discuss additional results and perspectives.

%%%%%%%%%%%%%%%%%%%%%%%%%%%%%%%%%%%%%%%%%%%%%%%%%%%%%%%%%%%%%%%%%%%%%%%%%

\section{Maps and the master bijection}\label{sec:master-bij}
In this section we set our definitions about maps and orientations. We then recall the master bijection for maps established in \cite{BeFu12}, and adapt it to maps with boundaries.

\subsection{Maps and weighted biorientations}
A \emph{map} is a decomposition of the 2-dimensional sphere into \emph{vertices} (points), \emph{edges} (homeomorphic to open segments), and \emph{faces} (homeomorphic to open disks), considered up to continuous deformation. A map can equivalently be defined as a drawing (without edge crossings) of a connected graph in the sphere, considered up to continuous deformation. Each edge of a map is thought as made of two \emph{half-edges} that meet in its middle. A \emph{corner} is the region between two consecutive half-edges around a vertex. The \emph{degree} of a vertex or face $x$, denoted $\deg(x)$, is the number of incident corners. A \emph{rooted map} is a map with a marked corner $c_0$; the incident vertex $v_0$ is called the root vertex, and the half-edge (resp. edge) following $c_0$ in clockwise order around $v_0$ is called the \emph{root half-edge} (resp. \emph{root edge}).
A map is said to be \emph{bipartite} if the underlying graph is bipartite, which happens precisely when every face has even degree. 
A \emph{plane map} is a map with a face distinguished as its \emph{outer face}. We think about plane maps, as drawn in the plane, with the outer face being the infinite face.
The non-outer faces are called \emph{inner faces}; vertices and edges are called \emph{outer} or \emph{inner} depending on whether they are incident to the outer face or not; an half-edge is \emph{inner} if it belongs to an inner edge and \emph{outer} if it belongs to an outer edge. The degree of the outer face is called the \emph{outer degree}.

A \emph{biorientation} of a map $M$ is the assignment of a direction
to each half-edge of $M$, that is, each half-edge is either outgoing or ingoing at its incident vertex. 
For $i\in\{0,1,2\}$, an edge is called $i$-way if it has $i$ ingoing half-edges. 
An \emph{orientation} is a biorientation such that every edge is 1-way.
If $M$ is a plane map endowed with a biorientation, then a \emph{ccw cycle} (resp. \emph{cw-cycle}) of $M$ is a simple cycle $C$ 
of edges of $M$ such that each edge of $C$ is either 2-way or 1-way with the interior of $C$ on its left (resp. on its right). 
The biorientation is called \emph{minimal} if there is no ccw cycle, and 
\emph{almost-minimal} if the only ccw cycle is the outer face contour (in which case 
the outer face contour must be a simple cycle). 
For $u,v$ two vertices of $M$, $v$ is said to be \emph{accessible} from $u$ if there is a path $P=u_0,u_1,\ldots,u_k$ of vertices of $M$ such that $u_0=u$, $u_k=v$, and for $i\in[1..k-1]$, 
the edge $(u_i,u_{i+1})$ is either 1-way from $u_i$ to $u_{i+1}$ or 2-way.
The biorientation is said to be \emph{accessible} from $u$ if every vertex of $M$ is accessible from $u$. 
A \emph{weighted biorientation} of $M$ is a biorientation of $M$ where each half-edge is assigned a weight (in some additive group). A \emph{$\ZZ$-biorientation} is a weighted biorientation such that weights at ingoing half-edges are positive integers, while weights at outgoing half-edges are non-positive integers.%\footnote{By a slight abuse of language, we will often use the terminology ``biorientation" to denote a map endowed with a biorientation.}

\subsection{Master bijection for $\ZZ$-bioriented maps}
We first define the families of bioriented maps involved in the master bijection. Let $d$ be a positive integer. 
We define $\cO_d$ as the set of plane maps of outer degree $d$ endowed with a $\ZZ$-biorientation which is minimal and accessible from every outer vertex, and such that every outer edge is either 2-way or is 1-way with an inner face on its right. 
We define $\cO_{-d}$ as the set of plane maps of outer degree $d$ endowed with a $\ZZ$-biorientation which is almost-minimal and accessible from every outer vertex, and such that outer edges are 1-way with weights $(0,1)$, and each inner half-edge incident to an outer vertex is outgoing. 

Next, we define the families of trees involved in the master bijection.
We call \emph{mobile} an unrooted plane tree with two kinds of vertices, black vertices and white vertices (vertices of the same color can be adjacent), where each corner at a black vertex possibly carries additional dangling half-edges called \emph{buds}; see Figure~\ref{fig:master_bij_weighted_biori} (right) for an example. The \emph{excess} of a mobile is 
defined as the number of half-edges incident to a white vertex, minus the number of buds. A \emph{weighted mobile} is a mobile where each half-edge, except for buds, is assigned a weight. A $\ZZ$-mobile is a weighted mobile such that weights of half-edges incident to white vertices are positive integers, while weights at half-edges incident to black vertices are non-positive integers. For $d\in\mathbb{Z}$, we denote by $\cB_d$ the set of $\ZZ$-mobiles of excess $d$. 

\fig{width=.7\linewidth}{local_rule_edge_biori}{The local rule performed at each edge (0-way, 1-way or 2-way) in the master bijection $\Phi$.}

\fig{width=\linewidth}{master_bij_weighted_biori}{The master bijection from a $\ZZ$-bioriented plane map in $\cO_{d}$ to a $\ZZ$-mobile of excess $d$ (the top example has $d=-4$, the bottom-example has $d=5$).}

Let $d\in\mathbb{Z}\backslash\{0\}$. We now recall the master bijection $\Phi$ 
introduced in~\cite{BeFu12} between $\cO_d$ and $\cB_d$. For $O\in\cO_d$, we obtain a mobile $T\in\cB_d$ by the following steps 
(see Figure~\ref{fig:master_bij_weighted_biori} for examples):
\begin{enumerate}
\item 
insert a black vertex in each face (including the outer face) of $O$;
\item
apply the local rule of Figure~\ref{fig:local_rule_edge_biori}
(which involves a transfer of weights) to each edge of $O$;
\item
erase the original edges of $O$ and the black vertex $b$ inserted in the outer face of $O$; if $d>0$ erase also the $d$ buds at $b$, if $d<0$ erase also the $|d|$ outer vertices of $O$ and the $|d|$ edges from $b$ to each of the outer vertices.
\end{enumerate}

\begin{theo}[\cite{BeFu12}]
For $d\in\mathbb{Z}\backslash\{0\}$, the mapping $\Phi$ is a bijection between $\cO_d$ and $\cB_d$. 
\end{theo}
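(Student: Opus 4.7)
The plan is to prove that $\Phi$ takes $\cO_d$ into $\cB_d$, to exhibit an explicit inverse map $\Psi\colon\cB_d\to\cO_d$, and to check that both compositions are the identity. Since the local rule of Figure~\ref{fig:local_rule_edge_biori} operates edge by edge and is clearly invertible as a rule (each of the three cases 0-way/1-way/2-way corresponds to a distinct local pattern in the mobile), once well-definedness of both $\Phi$ and $\Psi$ is established, the bijection statement follows by inspection of the rule.

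For well-definedness of $\Phi$, I would check in turn that $\Phi(O)$ (a) is connected, (b) is acyclic, (c) has buds only at black vertices, with weights of the correct signs, and (d) has excess equal to $d$. Point (c) is immediate from the shape of the local rule and the definition of a $\ZZ$-biorientation. For (a), connectedness follows from the accessibility of $O$ from every outer vertex: any oriented path in $O$ translates, via the local rule, to a walk in $\Phi(O)$ joining the corresponding vertices, so after erasing the outer black vertex (or the outer vertices, if $d<0$) everything remains reachable. For (b), a would-be cycle in $\Phi(O)$ separates the sphere into two regions; tracing the local rule backwards along such a cycle produces a ccw cycle of $O$ in one of the two regions, which contradicts minimality when $d>0$, and is forbidden by almost-minimality together with the outer-edge convention when $d<0$. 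Combined with an Euler-characteristic count of vertices and edges before and after the rule, acyclicity upgrades to the tree property. The excess count (d) is then a straightforward bookkeeping on the outer face: the defining outer-edge conditions of $\cO_d$ and $\cO_{-d}$ are tailored precisely so that the number of inner half-edges incident to the outer black vertex, minus the number of buds at it, is $d$.

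For the inverse $\Psi$, I would start from $T\in\cB_d$, add a new black vertex in the outer face (together with $d$ buds if $d>0$, or with $|d|$ outer vertices joined by $(0,1)$-weighted edges if $d<0$), and then at each edge or bud of the resulting object apply the inverse local rule to produce a 0-, 1-, or 2-way edge carrying the prescribed weights, finally deleting the auxiliary black vertices inside faces. The main obstacle is checking that the output $\Psi(T)$ lies in $\cO_d$, i.e.\ that the reconstructed biorientation is minimal (or almost-minimal) and accessible from every outer vertex: this is the mirror of the acyclicity-plus-connectedness analysis above, with the roles of cycles in the map and cycles in the mobile exchanged, and it is the step that most genuinely uses the specific outer-face conventions in the definitions of $\cO_d$ and $\cO_{-d}$. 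Once $\Psi(T)\in\cO_d$ is established, the equalities $\Psi\circ\Phi=\mathrm{id}$ and $\Phi\circ\Psi=\mathrm{id}$ follow directly from the fact that the local rule and its inverse match edge by edge, with weights and directions preserved.
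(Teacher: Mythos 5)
This theorem is not proved in the present paper at all: it is recalled from \cite{BeFu12}, so there is no internal proof to compare against, and your proposal has to be judged against the argument in that reference. Your overall plan (well-definedness of $\Phi$, an explicit inverse $\Psi$, and the check that the compositions are the identity) is indeed the structure of the proof in \cite{BeFu12}, and your points (a)--(d) for well-definedness are the right checklist. But as written the proposal has a genuine gap in the construction of $\Psi$.

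The local rule of Figure~\ref{fig:local_rule_edge_biori} is \emph{not} invertible edge by edge in the backward direction. For a $1$-way edge $e$ of $O$ with head $v$, the rule records in the mobile only an edge from $v$ to the black vertex of one incident face, together with a bud at the black vertex of the other face; the \emph{tail} of $e$ is not encoded locally anywhere in $T$. Consequently, given $T\in\cB_d$ you cannot ``apply the inverse local rule to produce a 0-, 1-, or 2-way edge'' one edge at a time: for each black--white edge and each bud of the mobile you must decide where the missing extremity of the corresponding map edge attaches, and this is determined only by a \emph{global} planar matching (a counterclockwise closure of buds against exposed corners, in the spirit of Schaeffer's and the Bouttier--Di~Francesco--Guitter closures). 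Defining this closure, proving that it is well defined for mobiles of nonzero excess, and proving that the resulting biorientation is minimal (or almost-minimal) and accessible is the substantial half of the proof in \cite{BeFu12}, and it is precisely the part your sketch replaces with an assertion. A secondary, smaller issue: the claim that a cycle in $\Phi(O)$ ``produces a ccw cycle of $O$ in one of the two regions'' also needs an actual argument (the mobile cycle encloses a region of the sphere, and one must extract from the map edges inside that region a counterclockwise circuit, using accessibility); it is true, but it is not a one-line consequence of the local rule.
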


The master bijection has the nice property that several parameters of a $\ZZ$-bioriented map $M\in\cO_d$ 
can be read on the associated $\ZZ$-mobile $T=\Phi(M)$. 
We define the \emph{weight} (resp. the \emph{indegree}) of a vertex $v\in M$ as the total
weight (resp. total number) of ingoing half-edges at $v$, 
and we define the \emph{weight} of a face $f\in M$ as the total
weight of the outgoing half-edges having $f$ on their right. 
For a vertex $v\in T$, we define the \emph{degree} of $v$ as the number of half-edges incident to $v$
(including buds if $v$ is black), and we define the \emph{weight} of $v$ as the total weight of the half-edges (excluding buds) incident to $v$. 
 It is easy to see that if $M\in\cO_d$ and $T=\Phi(M)$, then 
\begin{compactitem}
\item every inner face of $M$ corresponds to a black vertex in $T$ of same degree and same weight,
\item for $d>0$ (resp. $d<0$), every vertex (resp. every inner vertex) $v\in M$ corresponds to a white vertex $v'\in T$ of the same weight and such that the indegree of $v$ equals the degree of $v'$. 
\end{compactitem}

\subsection{Adaptation of the master bijection to maps with boundaries}
A face $f$ of a map is said to be \emph{simple} if the number of vertices
incident to $f$ is equal to the degree of $f$ (in other words 
 there is no pair of corners of $f$ incident
to the same vertex). 
A \emph{map with boundaries} is a map $M$ where the set of faces is partitioned 
into two subsets: \emph{boundary faces} 
and \emph{internal faces}, with the constraint that the boundary faces are simple,
and the contours of any two boundary faces are vertex-disjoint; these contour-cycles are called the \emph{boundaries} of $M$. Edges (and similarly half-edges and vertices) are
called \emph{boundary edges} or \emph{internal edges} depending on whether they are on a boundary or not. 
If $M$ is a \emph{plane} map with boundaries, 
whose outer face is a boundary face, then the 
contour of the outer face is called the \emph{outer boundary} and the contours of the other boundary faces
are called \emph{inner boundaries}. 

%The following remark provides a crucial
%criterion to ensure that a face is simple in a map endowed with a minimal (or almost-%minimal) biorientation, see Figure~\ref{fig:face_simple}:
%\begin{claim}
%Let $M$ be a plane map endowed with a minimal (or almost-minimal) biorientation. 
%Then, if the contour of a face $f$ is made only of 1-way edges that have the interior %of $f$ on their right, then $f$ is simple.
%\end{claim} 

%\fig{width=.3\linewidth}{face_simple}{A non-simple face $f$ whose incident edges are all 1-way with $f$ on their right; then one can extract a ccw cycle 
%from the contour of $f$.}

For $M$ a map with boundaries, a \emph{$\ZZ$-biorientation} of $M$ is called
\emph{consistent} if the boundary edges are all 1-way with weights $(0,1)$ and have 
the incident boundary face on their right. 
For $d\in\mathbb{Z}\backslash\{0\}$, we 
denote by $\wOd$ the set of plane maps with boundaries
endowed with a consistent 
$\ZZ$-biorientation, such that the outer face is a boundary face for $d<0$ and
an internal face for $d>0$, and when forgetting which faces are boundary faces, the underlying $\ZZ$-bioriented plane map is in $\cO_d$. 

A \emph{boundary mobile} is a mobile where every corner at a white vertex might carry additional dangling half-edges
 called \emph{legs}. White vertices having at least one leg are called \emph{boundary vertices}. 
The \emph{degree} of a white vertex $v$ is the number of 
non-leg half-edges incident to $v$. 
The \emph{excess} of a boundary mobile is defined as the number of half-edges incident to a white vertex (including the legs) minus the number of buds. 
A \emph{boundary $\ZZ$-mobile} is a boundary mobile where the 
half-edges different from buds and legs carry weights in $\ZZ$
such that half-edges at white vertices have positive weights
while half-edges at black vertices have non-positive weights. 
For $d\in\mathbb{Z}$, we denote by $\wBd$ the set of boundary $\ZZ$-mobiles of excess $d$.

\fig{width=.8\linewidth}{reduce}{Reduction operation at the black vertex $v$
corresponding to an inner boundary face in $O\in\cOw_d$.} 

We can now specialize the master bijection. For $O\in\wOd$, let $T=\Phi(O)$ be the associated $\ZZ$-mobile. Note that each inner boundary face $f$ of $O$ of degree $k$ yields a black vertex $v$ of degree $k$ in $T$ such that $v$ has no bud, and the $k$ neighbors $w_1,\ldots,w_k$ of $v$ are the white vertices corresponding to the vertices around $f$. 
We perform the following operation represented in Figure~\ref{fig:reduce}: we insert one leg at each corner of $v$, then contract the edges incident to $v$, and finally recolor $b$ as white.
Doing this for each inner boundary we obtain (without loss of information) a boundary $\ZZ$-mobile $T'$ of the same excess as $T$, called the \emph{reduction} of $T$. 
We denote by $\hPhi$ the mapping such that $\hPhi(O)=T'$. 

We now argue that $\hPhi$ is a bijection between $\wO_d$ and $\wB_d$. For a boundary mobile $T'$, the \emph{expansion} of $T'$ is the mobile $T$ obtained from $T'$ by applying to every boundary vertex the process of Figure~\ref{fig:reduce} in reverse direction: a boundary vertex with $k$ legs yields in $T$ a distinguished black vertex of degree $k$ with no buds, and with only white neighbors. 
Note that, if $T'$ has non-zero excess $d$ and if $O\in\cO_d$ denotes the $\ZZ$-bioriented plane map associated to $T$ by the 
master bijection, then each distinguished face $f\in O$ (i.e., a face associated to a distinguished black vertex of $T$) is simple; indeed if $k\geq 1$ denotes the degree of $f$, the corresponding black vertex $v\in T$ has $k$ white neighbors, which thus correspond to $k$ distinct vertices incident to $f$. In addition the contours 
of the distinguished inner faces are disjoint since the expansions of any two distinct
boundary vertices of $T'$ are vertex-disjoint in $T$. 
Lastly, for $d\in\mathbb{Z}_-^*$,
the outer face is simple and disjoint from the contours of the 
inner distinguished faces (indeed the vertices around an inner distinguished
face of $O$ are all present in $T$, hence are inner vertices of $O$). 
We thus conclude that $O$ belongs to $\cOw_d$, 
upon seeing the distinguished faces (including the outer face 
for $d\in\mathbb{Z}_-^*$) as boundary faces. The following 
statement summarizes the previous discussion: 

\begin{theo}\label{theo:master_boundary}
The master bijection $\hPhi$ adapted to consistent $\ZZ$-biorientations is a bijection between $\wOd$ and $\wBd$ for each $d\in\mathbb{Z}\backslash \{0\}$. 
\end{theo}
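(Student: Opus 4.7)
The plan is to verify that $\hPhi$ is well-defined from $\wOd$ to $\wBd$, exhibit an explicit inverse, and check that the two are mutually inverse; the master bijection $\Phi$ does most of the work, so the content is to track what the reduction/expansion steps do.

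For the forward direction, I would start with $O\in\wOd$, forget which faces are boundary faces (the underlying $\ZZ$-bioriented map lies in $\cO_d$ by definition), and set $T=\Phi(O)\in\cB_d$. For each inner boundary face $f$ of degree $k$, consistency of the biorientation says that every edge on the contour of $f$ is $1$-way with weights $(0,1)$ and has $f$ on its right. Applying the local rule of Figure~\ref{fig:local_rule_edge_biori} along the contour of $f$ shows that the black vertex $v\in T$ inserted in $f$ has exactly $k$ incident half-edges, no buds, and that each of them has weight $0$ at $v$, is attached to a distinct white vertex $w_i$ (corresponding to the $k$ distinct vertices around the simple face $f$), and carries a positive weight at $w_i$. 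The reduction operation of Figure~\ref{fig:reduce} therefore applies, converting $v$ into a boundary white vertex with $k$ legs without touching any other part of the mobile. Since the reduction exchanges $k$ white half-edges against $k$ legs (and removes no buds), the excess is preserved, and the resulting $T'=\hPhi(O)$ lies in $\wBd$.

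For the inverse, given $T'\in\wBd$ I would expand each boundary vertex with $\ell$ legs into a black vertex with $\ell$ white neighbors and no buds, thereby producing a mobile $T\in\cB_d$ (the excess is again unchanged), and then set $O=\Phi^{-1}(T)\in\cO_d$; finally, I would declare as boundary faces precisely those inner faces of $O$ corresponding to the distinguished (expanded) black vertices of $T$, together with the outer face when $d<0$. The nontrivial claim is that this produces an element of $\wOd$: one must show that every inner distinguished face is simple, that any two distinguished inner face contours are vertex-disjoint, and—when $d<0$—that the outer face is simple and disjoint from all inner distinguished faces. Consistency of the biorientation on boundary edges is then read off directly from the local rule: after expansion, the edges touching a distinguished black vertex are all $1$-way with weights $(0,1)$ and have the distinguished face on their right.

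The main obstacle is therefore the three simplicity/disjointness statements, and all of them follow from one structural observation already highlighted in the paragraph preceding the theorem: under $\Phi^{-1}$, a black vertex of degree $k$ whose $k$ incident half-edges are all non-bud and go to distinct white vertices corresponds to a simple inner face of $O$ whose vertices are exactly the images of those white vertices. Disjointness of two inner distinguished faces then reduces to the fact that two expanded boundary vertices in $T$ have disjoint neighborhoods (they share no white vertex), and the outer/inner disjointness when $d<0$ uses that the outer vertices of $O$ are erased by $\Phi$ in step (3) of the master bijection, so they cannot coincide with any vertex adjacent to an expanded black vertex in $T$. Since reduction and expansion are manifestly inverse operations at the level of mobiles, and $\Phi$ is a bijection from $\cO_d$ to $\cB_d$, combining these verifications yields the bijectivity of $\hPhi$.
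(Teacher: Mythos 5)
Your proposal is correct and follows essentially the same route as the paper: both define $\hPhi$ as $\Phi$ followed by the reduction of Figure~\ref{fig:reduce}, invert it by expanding each boundary vertex and applying $\Phi^{-1}$, and reduce the whole question to checking that the distinguished faces are simple and pairwise vertex-disjoint (with the outer face handled separately for $d<0$), using exactly the same observations about distinct white neighbors and vertex-disjoint expansions. The only cosmetic difference is that you phrase the outer-face disjointness via the erasure of outer vertices in step (3) of $\Phi$, whereas the paper says the vertices around an inner distinguished face all appear in $T$ and hence are inner; these are the same argument.
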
 

\fig{width=\linewidth}{master_bij_weighted_biori_boundary}{The master bijection $\hPhi$ applied to two $\ZZ$-bioriented plane maps in $\cOw_{d}$, with $d=-4$ for the top-example, and $d=5$ for the bottom-example. The weights of boundary-edges, which are always $(0,1)$ by definition, are not indicated.}

The bijection $\hPhi$ is illustrated in Figure~\ref{fig:master_bij_weighted_biori_boundary}.
As before, several parameters can be tracked through the bijection. For a map $M$ with boundaries endowed with a consistent $\ZZ$-biorientation, we define the \emph{weight} (resp. the \emph{indegree}) of a boundary $C$ as the total weight (resp. total number) of ingoing half-edges incident to a vertex of $C$ but not lying on an edge of $C$. For a boundary $\ZZ$-mobile, we define the \emph{weight} of a white vertex $v$ as the total weight of the half-edges (excluding legs) incident to $v$. It is easy to see that if $O\in\wOd$ and $T=\hPhi(O)$, then 
\begin{compactitem}
\item every internal inner face of $O$ corresponds to a black vertex in $T$ of same degree and same weight, 
\item every internal vertex $v\in O$ corresponds to a non-boundary white vertex $v'\in T$ of the same weight and such that the indegree of $v$ equals the degree of $v'$, 
\item every inner boundary of length $k$, indegree $r$, and weight $j$ in $O$ corresponds to a boundary vertex in $T$ with $k$ legs, degree $r$, and weight $j$. 
\end{compactitem}

\section{Bijections for quadrangulations with boundaries}\label{sec:bij-quad}
In this section we obtain bijections for \emph{quadrangulations with boundaries}, that is, maps with boundaries such that every internal face
has degree $4$. We start with the simpler case where one of the boundaries has degree 2 before treating the general case.

\subsection{Quadrangulations with at least one boundary of length $2$}\label{sec:outer_degree_2}
We denote by $\cDqua$ the class of bipartite quadrangulations with boundaries, with a marked boundary face of degree 2. We think of maps in $\cDqua$ as \emph{plane maps} by taking the marked boundary as the outer face. 
%Define a \emph{2-outer quadrangulated dissection} as a quadrangulated dissection with a boundary face of degree $2$ taken as the outer face. Denote by $\cDqua$ the set of bipartite 2-outer quadrangulated dissections. 
For $M\in\cDqua$, we call \emph{1-orientation} of $M$ a consistent $\ZZ$-biorientation with weights in $\{-1,0,1\}$ such that:
\begin{compactitem}
\item every internal edge has weight $0$ (hence is either $0$-way with weights $(0,0)$ or 1-way with weights $(-1,1)$), 
\item every internal face (of degree $4$) has weight $-1$,% (hence has exactly one 1-way clockwise edge on its contour), 
\item every internal vertex has weight (and indegree) $1$, 
\item every inner boundary of length $2i$ has weight (and indegree) $i+1$, and the outer boundary (of length $2$) has weight (and indegree) $0$.
\end{compactitem}

\begin{prop}\label{prop:2outerquad}
Every map $M\in\cDqua$ has a unique 1-orientation in $\wO_{-2}$. We call it its \emph{canonical biorientation}. 
\end{prop}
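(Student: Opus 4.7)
The plan is to prove existence and uniqueness of a canonical $1$-orientation separately, using the general theory of $\alpha$-orientations adapted to the $\ZZ$-bioriented boundary setting. Let $M\in\cDqua$ have $m$ internal vertices, $r$ inner boundaries of lengths $2a_1,\ldots,2a_r$ (set $b=\sum_i a_i$), and outer boundary of length $2$. A short Euler computation gives $f_{\mathrm{int}} = m+r+b$ internal faces and $E_{\mathrm{int}} = 2m+2r+b-1$ internal edges, and the total prescribed indegree $m+\sum_i(a_i+1)$ coincides with the total face-weight deficit $f_{\mathrm{int}}$, so the face and vertex conditions of a $1$-orientation are globally compatible. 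A further bookkeeping step shows that every $1$-way internal edge must have an internal face on its right (not a boundary face); in particular every internal half-edge at an outer vertex is outgoing, which matches part of the definition of $\wO_{-2}$.

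For uniqueness among orientations in $\wO_{-2}$, I would apply the standard minimality argument. Given two $1$-orientations $\omega,\omega'\in\wO_{-2}$, their difference $\delta=\omega-\omega'$ vanishes on boundary edges (these are rigidly $(0,1)$-weighted), has zero divergence at each internal vertex and across each boundary, and has zero net contribution to each internal face weight. On a plane map, such a conservative integer half-edge weighting decomposes into a signed combination of oriented simple cycles of internal edges. Almost-minimality of $\omega$ and $\omega'$ forbids any counterclockwise cycle outside the outer contour; hence no non-trivial cycle can appear in $\delta$, and $\omega=\omega'$. This is the $\ZZ$-bioriented analogue of the classical Felsner/Propp rigidity for $\alpha$-orientations.

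For existence, I would first construct some $1$-orientation of $M$ (ignoring the $\wO_{-2}$ conditions), then take the minimal one in the distributive lattice of $1$-orientations under counterclockwise cycle reversal, and verify that this minimal orientation is almost-minimal and accessible from the outer vertices. The feasibility step can be encoded as an $\alpha$-orientation problem on an auxiliary bipartite ``corner-incidence'' graph (a $1$-way internal edge being equivalent to the choice of a marked corner at its head vertex inside its right, internal face), and checked either by a Hall-type condition on cuts via Euler-characteristic inequalities on sub-maps of $M$, or by an explicit face-peeling construction that removes at each step an internal face incident to a boundary. Accessibility of the minimal orientation from the outer vertices follows from a maximum-principle argument: a vertex inaccessible from the outer boundary would lie inside an internal ccw cycle, contradicting almost-minimality (and the indegree of the outer boundary being $0$ ensures no obstruction at that level). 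The main obstacle I anticipate is precisely the feasibility step: the boundaries impose \emph{total} indegree constraints per boundary rather than per vertex, so the Hall condition is genuinely global and requires a careful sub-map argument — this is presumably the content deferred to Section~\ref{sec:proofs} and unified there with the analogous existence proofs for other face-degree conditions.
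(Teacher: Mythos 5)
Your overall skeleton --- Felsner/Propp rigidity of the minimal orientation for uniqueness, a Hall-type feasibility condition for existence --- is indeed the framework the paper builds on (Lemmas~\ref{lem:exists-alpha} and~\ref{lem:uniqueminimal}), and your uniqueness paragraph is essentially sound: two almost-minimal $1$-orientations would differ on a directed cycle that is ccw in one of them. But the proposal has a genuine gap exactly where you flag it: the existence step is never carried out, and it is not a routine application of the Hall condition on $M$. A $1$-orientation is a $\ZZ$-biorientation with \emph{negative} weights on outgoing half-edges and with prescribed \emph{face} weights, neither of which fits the $\alpha/\beta$-orientation framework of Lemma~\ref{lem:exists-alpha} (which handles $\NN$-biorientations with constraints on vertices, boundaries and edges only). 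The paper's actual route (Proposition~\ref{prop:Ddori} with $d=2$, proved in Section~\ref{sec:proofs}) is: subdivide each edge of $M$ into four edges, pass to the star map so that face constraints become vertex constraints at star-vertices, establish existence of a $2d/(2d-1)$-regular $\NN$-orientation there by filling faces with $2b$-angulations and verifying the Hall inequalities via Euler counts (Lemma~\ref{lem:b-orient}, Proposition~\ref{lem:b-regular-orient}), prove that the minimal such orientation is \emph{transferable} (Claim~\ref{claim:transferable}, a delicate local argument), convert it into a $\ZZ$-biorientation via the transfer Lemma~\ref{lem:transfer}, contract the subdivisions, and finally halve all weights using the parity Lemma~\ref{lem:parity}. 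Your ``corner-incidence graph'' gestures at the star-map step, but the mechanism for producing negative weights from an $\NN$-orientation of the auxiliary graph --- and for verifying that the result lands in $\cO_{-2}$ --- is entirely missing, and the transferability claim it hinges on is one of the hardest points of the proof.

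A second, concrete error: your accessibility argument (``a vertex inaccessible from the outer boundary would lie inside an internal ccw cycle, contradicting almost-minimality'') does not work. If $A$ is the set of vertices from which the outer boundary is reachable, every $1$-way edge crossing out of $A$ enters $A$, so the complement supports a directed cycle; but in an almost-minimal orientation that cycle is \emph{clockwise}, which is permitted, so minimality alone yields no contradiction. Accessibility is a property of the degree data $(\alpha,\beta)$, not of minimality (Lemma~\ref{lem:uniqueminimal}), and the paper derives it from the strict form of the Hall inequality, i.e.\ from the same Euler-characteristic count that proves feasibility --- this is precisely where the internal girth (automatic here by bipartiteness, but still used) enters, via the fact that an inaccessible vertex would be enclosed by an internally-enclosed region of contour length exactly $2b$ (end of the proof of Lemma~\ref{lem:b-orient}). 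So both the feasibility count and the accessibility argument need to be done, and done together; deferring them is deferring the entire content of the proposition.
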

The proof of Proposition \ref{prop:2outerquad} is delayed to Section~\ref{sec:proofs}. 
We denote by $\cTqua$ the set of boundary mobiles associated to 
maps in $\cDqua$ (endowed with their canonical biorientation) via the master bijection for maps with boundaries. By  Theorem~\ref{theo:master_boundary}, these are the boundary mobiles with weights in $\{-1,0,1\}$ satisfying the following properties: 
\begin{compactitem}
\item every edge has weight $0$ (hence, is either black-black of weights $(0,0)$, or black-white of weights $(-1,1)$), 
\item every black vertex has degree $4$ and weight $-1$ (hence has a unique white neighbor),
\item for all $i\geq 0$, every white vertex of degree $i+1$ carries $2i$ legs. 
\end{compactitem}
We omit the condition that the excess is $-2$, because it can easily be checked to be a consequence of the above properties.

\fig{width=\linewidth}{bij_diss_quad_2outer}{(a) A map in $\cDqua$ endowed with its canonical biorientation (the 1-way edges are indicated as directed edges, the 0-way edges are indicated as undirected edges, and the weights, which are uniquely induced by the biorientation, are not indicated). (b) The quadrangulation superimposed with the corresponding mobile. (c) The reduced boundary mobile (with $2i$ legs at each white vertex of degree $i+1$), where again the weights (which are uniquely induced by the mobile) are not indicated.}

To summarize, Theorem~\ref{theo:master_boundary} and Proposition~\ref{prop:2outerquad} yield the following bijection (illustrated in Figure~\ref{fig:bij_diss_quad_2outer}) for bipartite quadrangulations with a distinguished boundary of length 2.
\begin{theo}\label{theo:bipartite_2outerdiss}
The set $\cDqua$ of quadrangulations with boundaries is in bijection with the set $\cTqua$ of $\ZZ$-mobiles 
via the master bijection $\hPhi$. 
If $M\in\cDqua$ and $T\in\cTqua$ are associated by the bijection, then each inner boundary of length $2i$ in $M$ corresponds to a white vertex in $T$ of weight (and degree) $i+1$, and each internal vertex of $M$ corresponds to a white leaf in $T$.
\end{theo}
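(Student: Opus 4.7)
The plan is to derive the theorem immediately by composing Proposition~\ref{prop:2outerquad} with the boundary version of the master bijection, Theorem~\ref{theo:master_boundary}. By Proposition~\ref{prop:2outerquad}, assigning to each $M\in\cDqua$ its unique canonical 1-orientation defines a bijection from $\cDqua$ onto a subclass of $\wO_{-2}$. By Theorem~\ref{theo:master_boundary}, the map $\hPhi\colon\wO_{-2}\to\wB_{-2}$ is a bijection. The composition is then a bijection from $\cDqua$ onto a subclass of $\wB_{-2}$, which by definition is $\cTqua$. This establishes the bijective content of the theorem.

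The second step of the plan is to verify the parameter correspondence. This will follow directly from the three bulleted parameter identities stated just after Theorem~\ref{theo:master_boundary}. Using the third bullet together with the defining conditions of a 1-orientation, every inner boundary of length $2i$ in $M$ has indegree $i+1$ and weight $i+1$, hence corresponds via $\hPhi$ to a white boundary vertex of $T$ carrying $2i$ legs, of degree $i+1$ and weight $i+1$. Using the second bullet, every internal vertex of $M$, which has indegree and weight $1$, corresponds to a non-boundary white vertex of $T$ of degree $1$ and weight $1$, i.e.\ a white leaf.

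Finally, I would briefly justify the mobile-level characterization of $\cTqua$ stated just before the theorem, since it is invoked to identify $\cTqua$ concretely. Each defining condition of a 1-orientation translates, through the local rule of Figure~\ref{fig:local_rule_edge_biori} and the parameter identities, into the corresponding condition on mobiles: every edge has weight $0$ (and is either black-black of type $(0,0)$ or black-white of type $(-1,1)$); every internal face of degree $4$ and weight $-1$ corresponds to a black vertex of degree $4$ and weight $-1$, and the weight $-1$ forces exactly one black-white incident edge, i.e.\ a unique white neighbor; and the boundary condition translates to $2i$ legs at each white boundary vertex of degree $i+1$. The excess condition $d=-2$ is then forced automatically by these local constraints via a short half-edge count, and so need not be listed separately in the definition of $\cTqua$.

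I do not expect any serious obstacle: since the heavy lifting (existence and uniqueness of the canonical 1-orientation in Proposition~\ref{prop:2outerquad}, and the master bijection of Theorem~\ref{theo:master_boundary}) has already been carried out, the proof of Theorem~\ref{theo:bipartite_2outerdiss} reduces to a mechanical translation between 1-orientations and their associated boundary $\ZZ$-mobiles.
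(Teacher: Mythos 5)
Your proposal is correct and follows exactly the paper's own route: the paper likewise obtains Theorem~\ref{theo:bipartite_2outerdiss} by combining Proposition~\ref{prop:2outerquad} (existence and uniqueness of the canonical 1-orientation) with Theorem~\ref{theo:master_boundary}, reading off the parameter correspondences from the bullets following that theorem, and noting that the excess $-2$ is implied by the local conditions on the mobiles. No gaps.
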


%% OB. I thought at one point to introduce the paragraph below in order to replace the discussion about maps with 2 marked faces of degree 2. (But there is the special case of the map which is a 2 cycle which is slightly annoying).

%% Theorem~\ref{theo:bipartite_2outerdiss} is illustrated in Figure~\ref{fig:bij_diss_quad_2outer}. 
%% Let $\cDquah$ be the set of quadrangulations obtained from $\cDqua$ by marking a half-edge (this is equivalent to the usual notion of having a \emph{root}). Let $\cTquah$ be the set of mobiles obtained from $\cTquah$ by either marking a half-edge on a black-black edge, or marking a bud and orienting it either ingoing or outgoing.
%% \begin{coro}
%% The set $\cDquah$ and $\cTquah$ are in bijection, with the same parameter correspondence as in Theorem~\ref{theo:bipartite_2outerdiss}
%% \end{coro}

%% \begin{proof} This follows from Theorem~\ref{theo:bipartite_2outerdiss} upon observing that for $O\in \wO_{-2}$ and $T=\hPhi(O)$, the half-edges of $O$ belonging to a 0-way edge of $O$ correspond bijectively to half-edges on a black-black edge of $T$, while 1-way edges of $O$ correspond bijectively to buds of $T$. 
%% \end{proof}

\subsection{Quadrangulations with arbitrary boundary lengths}\label{sec:general_case_quad_diss} 
For $a\geq 1$, we denote by $\Qu{2a}$ the set of bipartite quadrangulations with boundaries with a marked boundary face of degree $2a$. In the previous section we obtained a bijection for $\Qu{2}=\cDqua$. In order to get a bijection for $\Qu{2a}$ when $a>1$, we will need to first mark an edge and decompose our marked maps into two pieces before applying the master bijection to each piece\footnote{A similar strategy was already used in~\cite{BeFu12,BeFu12b,BeFu13}.}.

Let $\Qb{2a}$ be the set of maps obtained from maps in $\Qu{2a}$ by also marking an edge (either an internal edge or a boundary edge). Let $\Qbb{2a}$ be the set of bipartite maps with a marked boundary face of degree $2a$ and a marked internal face of degree 2, such that all the non-marked internal faces have degree 4. We also denote by $\Qvv{2a}$  the set of maps obtained from maps in $\Qbb{2a}$ by marking a corner in the marked boundary face. 

Given a map $M$ in $\Qb{2a}$, we obtain a map $M'$ in $\Qbb{2a}$ by opening the marked edge into an internal face of degree 2. This operation, which we call \emph{edge-opening} is clearly a bijection for $a>1$:
\begin{lem}\label{lem:edge_blow}
For all $a>1$, the edge-opening is a bijection between $\Qb{2a}$ and $\Qbb{2a}$  which preserves the number of internal vertices and the boundary lengths.
\end{lem}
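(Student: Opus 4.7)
The plan is to give the edge-opening and its inverse explicitly, then check that both preserve the required structure.

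Given $M \in \Qb{2a}$ with marked edge $e = uv$, define $M'$ by splitting $e$ into two parallel edges $e_1, e_2$ placed consecutively in the cyclic orders around $u$ and around $v$. This creates a new face of degree $2$ (a digon) bounded by $e_1$ and $e_2$, which is declared to be the marked internal face of $M'$. Conversely, for $M' \in \Qbb{2a}$, contract the marked internal digon, identifying its two parallel edges into a single edge which becomes the marked edge of the resulting map $M$.

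I would verify that edge-opening sends $\Qb{2a}$ into $\Qbb{2a}$ by checking: bipartiteness is preserved (no new vertex is created, so the $2$-coloring of $M$ still works for $M'$); the marked boundary face retains degree $2a$ (its cyclic contour is unchanged except that $e$ is replaced by one of $e_1, e_2$); the new digon is an internal face of degree $2$; and every non-marked internal face of $M'$ is inherited unchanged from $M$, hence has degree $4$. The boundaries of $M$ remain untouched as vertex-disjoint simple cycles. The hypothesis $a > 1$ ensures that the marked boundary (of degree $2a \geq 4$) is distinguishable from the marked internal digon on degree grounds alone, so the target class is well-defined without labeling ambiguity.

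Conversely, for $M' \in \Qbb{2a}$, the contraction of the marked digon is well-defined: bipartiteness forces its two vertices to be distinct, so the digon consists of a genuine pair of parallel edges that merge into a single edge upon contraction. The resulting map lies in $\Qb{2a}$ since the contracted edge becomes the marked edge, the marked boundary face is unaffected, and all remaining internal faces have degree $4$. The two operations are mutual inverses by direct inspection, and both leave vertices and boundary contours untouched, so the number of internal vertices and the lengths of all boundaries are preserved. The proof presents no substantive obstacle; the only work lies in the face-degree and incidence bookkeeping local to the edge or digon being opened or contracted.
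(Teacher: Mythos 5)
Your proof is correct and takes essentially the same route as the paper, which states this lemma without proof (calling the operation ``clearly a bijection for $a>1$''); your explicit description of the opening/contraction pair and the local face-degree bookkeeping is exactly the verification the paper has in mind. One small correction: the hypothesis $a>1$ is not needed to distinguish the two marked faces (they are distinguished as boundary versus internal regardless of degree) but rather to exclude the exceptional map $\eps\in\Qbb{2}$, whose digon-contraction yields a single-edge map whose unique face does not have a simple-cycle contour and hence is not a valid element of $\Qb{2}$ --- precisely the caveat the paper records immediately after the lemma.
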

%% This is clearly a bijection (preserving the number of internal vertices and the boundary lengths) for all $a>1$, so that
%% \begin{equation}\label{eq:edge_blow}
%% \Qb{2a}\simeq \Qbb{2a}.
%% \end{equation}
Note however that $\Qbb{2}$ contains a map $\eps$ with 2 edges (a 2-cycle separating a boundary and an internal face) which is not obtained from a map in $\Qb{2}$, so that the bijection is between $\Qb{2}$ and $\Qbb{2}\setminus \{\eps\}$.

We will now describe a canonical decomposition of maps in $\Qbb{2a}$ illustrated in Figure~\ref{fig:bij_quad_diss_complete}(a)-(b). Let $M$ be in $\Qbb{2a}$, and let $f_s$ be the marked boundary face. 
Let $C$ be a simple cycle of $M$, and let $R_C$ and $L_C$ be the regions bounded by $C$ containing $f_s$ and not containing $f_s$ respectively. The cycle $C$ is said to be \emph{blocking} if $C$ has length $2$, the marked internal face is in $L_C$, and any boundary face incident to a vertex of $C$ is in $R_C$. 
Note that the contour of the marked internal face is a blocking cycle. It is easy to see that there exists a unique blocking cycle $C$ such that $L_C$ is maximal (that is, contains $L_{C'}$ for any blocking cycle $C'$). We call $C$ the \emph{maximal blocking cycle} of $M$. The maximal blocking cycle is indicated in Figure~\ref{fig:bij_quad_diss_complete}(a). The map $M$ is called \emph{reduced} if its maximal blocking cycle is the contour of the marked internal face, and we denote by $\Qbbr{2a}$ and $\Qvvr{2a}$ the subsets of $\Qbb{2a}$ and $\Qvv{2a}$ corresponding to reduced maps.

\fig{width=\linewidth}{bij_quad_diss_complete}{(a) A map in $\Qbb{4}$: the marked boundary face is the outer face, the marked internal face is indicated by a square, and the maximal blocking cycle $C$ is drawn in bold. (b) The maps $M_1$ and $M_2$ resulting from cutting $M$ along $C$, each represented as a plane map endowed with its canonical biorientation (the marked inner face in each case is indicated by a square). (c) The mobiles associated to $M_1$ and $M_2$. (d) The reduced boundary mobiles associated to $M_1$ and $M_2$, where the marked vertex (corresponding to the marked inner face) is indicated by a square.}

We now consider the two maps obtained from a map $M$ in $\Qbb{2a}$ by ``cutting the sphere'' along the maximal blocking cycle $C$, as illustrated in Figure~\ref{fig:bij_quad_diss_complete}(b). 
Precisely, we denote by $M_1$ the map obtained from $M$ by replacing $R_C$ by a single marked boundary face (of degree 2), and we denote by $M_2$ the map obtained from $M$ by replacing $L_C$ by a single marked internal face (of degree 2). It is clear that $M_1$ is in $\Qbb{2}$, while $M_2$ is in $\Qbbr{2a}$.
%% We now consider the two maps obtained from a map $M$ in $\Qbb{2a}$ by ``cutting the sphere'' along the maximal blocking cycle $C$. 
%% Precisely, we denote by $M_1$ the map in $\Qbb{2}$ obtained from $C\cup L_C$ by considering the ``hole'' bounded by $C$ as a marked boundary face of degree 2. 
%% We denote by $M_2$ be the map in $\Qbb{2a}$ obtained from $C\cup R_C$ by considering the ``hole'' bounded by $C$ as a marked internal face. Note that $M_2$ is necessarily reduced. 
Conversely, if we glue the marked boundary face of a map $N_1\in \Qbb{2}$ to the marked internal face of a reduced map $N_2\in \Qbbr{2a}$, we obtain a map $M\in\Qbb{2a}$ whose maximal blocking cycle is the contour of the glued faces, so that $N_1=M_1$ and $N_2=M_2$. In order to make the preceding decomposition bijective, it is convenient to work with \emph{rooted} maps. Given a map $M$ in $\Qvv{2a}$, we define $M_1$ and $M_2$ as above, except that we mark a corner in the newly created boundary face of $M_1$. In order to fix a convention, we choose the corner of $M_1$ such that the vertices incident to the marked corners of $M_1$ and $M_2$ are in the same block of the bipartition of the vertices of $M$. The decomposition $M\mapsto (M_1,M_2)$ is now bijective and we call it the \emph{canonical decomposition} of the maps in $\Qvv{2a}$. We summarize the above discussion:

\begin{lem}\label{lem:decompose-Qvv}
For all $a\geq 1$, the canonical decomposition is a bijection between $\Qvv{2a}$ and $\Qvv{2}\times\Qvvr{2a}$.
\end{lem}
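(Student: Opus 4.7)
The plan is to verify in turn that (i) the decomposition $M\mapsto(M_1,M_2)$ lands in $\Qvv{2}\times\Qvvr{2a}$; (ii) the gluing operation is well-defined from $\Qvv{2}\times\Qvvr{2a}$ to $\Qvv{2a}$; and (iii) the two operations are mutually inverse.

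For (i), the first point is the existence and uniqueness of a maximal blocking cycle. The contour of the marked internal face is itself a blocking cycle, so the family is nonempty. For uniqueness of the maximum, I would show that given two blocking cycles $C$ and $C'$, either $L_C\subseteq L_{C'}$, or $L_{C'}\subseteq L_C$, or there exists a blocking cycle $C''$ with $L_{C''}\supseteq L_C\cup L_{C'}$. Since blocking cycles are simple $2$-cycles (pairs of parallel edges in the bipartite map), this follows from a short case analysis on the vertex-intersection $|C\cap C'|\in\{0,1,2\}$, using that $L_C\cap L_{C'}$ contains the marked internal face and that boundary faces adjacent to $C$ or $C'$ lie on the ``$R$'' side. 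The supremum over this finite family yields the unique maximal blocking cycle. Given this cycle $C$, I then check that $M_1\in\Qvv{2}$: replacing $R_C$ with a boundary face of degree $2$ preserves bipartiteness, the marked internal face of $M$ remains inside $L_C$, and vertex-disjointness of the boundary faces holds because the blocking condition forbids any boundary face of $L_C$ from touching $C$. The corner convention fixes the marked corner of $M_1$ uniquely, since the two vertices of $C$ lie in distinct bipartition classes and exactly one matches the class of the marked corner of $M_2$. For $M_2\in\Qvvr{2a}$: the marked boundary face (of degree $2a$) and the new degree-$2$ internal face are both present, all other internal faces have degree $4$, and reducedness holds because any blocking cycle of $M_2$ strictly enclosing the new marked internal face would, upon gluing $M_1$ back, yield a blocking cycle of $M$ with $L$-region strictly larger than $L_C$, contradicting maximality.

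For (ii) and (iii), given $(N_1,N_2)\in\Qvv{2}\times\Qvvr{2a}$, I would glue the marked boundary face of $N_1$ to the marked internal face of $N_2$, identifying their marked corners through the bipartition convention (both faces have degree $2$, and fixing a matched vertex class uniquely determines the identification). The resulting map $M\in\Qvv{2a}$ carries a gluing cycle $C$ of length $2$ which is blocking in $M$: the marked internal face lies in $L_C$ by construction, and any boundary face of $M$ incident to $C$ comes from $N_2$ (hence lies in $R_C$), because the two vertices of $C$ were disjoint from all non-marked boundary faces of $N_1$ by the vertex-disjointness axiom on boundary faces. That $C$ is the \emph{maximal} blocking cycle of $M$ follows from $N_2$ being reduced, so decomposing $M$ recovers $(N_1,N_2)$. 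The two operations are thus mutually inverse.

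The main obstacle will be the case analysis for the existence of the maximal blocking cycle, especially verifying that two length-$2$ blocking cycles sharing one or two vertices can always be merged into a length-$2$ blocking cycle with a larger $L$-region; the combined use of bipartiteness and the ``$R$-side'' constraint on incident boundary faces is what makes this possible.
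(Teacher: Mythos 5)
Your proposal is correct and follows essentially the same route as the paper: identify the maximal blocking cycle, cut along it to produce $(M_1,M_2)\in\Qvv{2}\times\Qvvr{2a}$, use the bipartition of the vertices of the $2$-cycle to fix the marked corner of $M_1$ (equivalently, the gluing), and check that gluing and cutting are mutually inverse, with reducedness of $M_2$ following from maximality of the blocking cycle. The only point where you diverge is the one you flag as the main obstacle: the existence and uniqueness of the maximal blocking cycle. The paper treats this tersely in Section~\ref{sec:general_case_quad_diss} (``it is easy to see''), but in Section~\ref{sec:bij_annular} it proves the general statement by a submodularity inequality on contour lengths, $\ell_{1}+\ell_{2}=\ell_{1\cup2}+\ell_{1\cap2}+2\Delta$, which shows directly that the union of two blocked regions of minimal contour length again has minimal contour length; this replaces your case analysis on $|C\cap C'|\in\{0,1,2\}$ and scales to arbitrary girth $d$, whereas the case analysis is tied to length-$2$ cycles and requires some care to ensure the merged region is still bounded by a simple $2$-cycle (e.g.\ that its complement stays connected). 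Either argument closes the gap; the submodularity route is the cleaner and more general one.
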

Note that the case $a=1$ above is special in that the set $\Qvvr{2}$ contains only the map $\{\eps\}$. 

Next, we describe bijections for maps in $\Qvv{2}$ and $\Qvvr{2a}$ by using a ``master bijection'' approach illustrated in Figure~\ref{fig:bij_quad_diss_complete}(b)-(d).
For $M\in\Qbb{2}$, we call \emph{1-orientation} of $M$ a consistent $\ZZ$-biorientation of $M$ with weights in $\{-1,0,1\}$ such that: 
\begin{compactitem}
\item every internal edge has weight $0$,
\item every internal vertex has indegree $1$,
\item every non-marked internal face (of degree $4$) has weight $-1$, while the marked internal face (of degree $2$) has weight $0$,
\item every non-marked boundary of length $2i$ has weight (and indegree) $i+1$, while the marked boundary (of length $2$) has weight (and indegree) $0$. 
\end{compactitem}

\begin{prop}\label{prop:Equad}
Let $M$ be a map in $\Qbb{2}$ considered as a plane map by taking the outer face to be the marked boundary face.
Then $M$ admits a unique 1-orientation in $\wO_{-2}$. We call it the \emph{canonical biorientation} of $M$. 
\end{prop}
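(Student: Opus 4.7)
The proof strategy mirrors that of Proposition~\ref{prop:2outerquad}: one first shows existence of at least one 1-orientation by a flow argument, then extracts a unique almost-minimal element from the set of 1-orientations via a Felsner-type lattice structure, and finally verifies accessibility from every outer vertex, placing this element in $\wO_{-2}$.

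View $M$ as a plane map with the marked boundary face as the outer face. Write $v_{\mathrm{int}}$ for the number of internal vertices, $f_4$ for the number of non-marked internal faces (all of degree $4$), and $2b_1,\ldots,2b_r$ for the lengths of the non-outer boundaries, with $b:=\sum_i b_i$. Euler's relation, combined with the face-degree identity $2|E(M)|=4f_4+2+2+2b$, yields $f_4=v_{\mathrm{int}}+b+r$. The right-hand side is exactly the total prescribed indegree: $v_{\mathrm{int}}$ from the internal vertices, $\sum_i(b_i+1)=b+r$ from the non-outer boundaries, and $0$ from the outer boundary. A parallel double-count on face weights shows that it also equals the number of $1$-way internal edges required by the face constraints. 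Hence the vertex and face demands are globally consistent at the half-edge counting level. Existence of an actual 1-orientation then follows from a standard max-flow / Hall-type argument on the planar incidence network of $M$, following the same scheme as in the proof of Proposition~\ref{prop:2outerquad}; the marked internal face of degree $2$, having prescribed weight $0$, imposes zero demand and so does not affect Hall's condition.

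The set of $1$-orientations of $M$ is closed under a suitable flip operation: any two $1$-orientations differ by a symmetric difference supported on simple cycles, and reversing a ccw cycle (with a compensating swap of the half-edge weights on its edges, turning a directed edge into its reverse and preserving the $(0,0)$/$(-1,1)$ dichotomy) preserves every vertex indegree, vertex weight and face weight. This endows the set with a finite distributive lattice structure in the style of Felsner--Propp. Its unique minimum is an almost-minimal $1$-orientation: the outer boundary is forced to be ccw by the consistency convention on boundary-edge weights and so cannot be flipped, while any other ccw cycle can be, and flipping it yields a strictly smaller element. The last and most delicate step — and the main obstacle — is accessibility from every outer vertex at this minimum. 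If some vertex $u$ were inaccessible, the set $S$ of inaccessible vertices would be nonempty and bounded by a simple cycle $C$ disjoint from the outer boundary; the weight balance on the edges crossing $C$, together with the prescribed indegrees at internal vertices and inner boundaries inside $S$, would force $C$ to be ccw, contradicting almost-minimality. This contradiction simultaneously rules out the existence of any other almost-minimal $1$-orientation in $\wO_{-2}$, giving uniqueness.
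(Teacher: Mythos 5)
Your proposal does not match the paper's proof, which is a one--line reduction: a map $M\in\Qbb{2}$ is seen as a map $D\in\cDqua$ in which an edge has been opened into the marked internal face of degree $2$, and the canonical biorientation of $M$ is read off from that of $D$ via the local rule of Figure~\ref{fig:blow2face}. You instead attempt to redo the whole existence/uniqueness/accessibility argument from scratch for $\Qbb{2}$, and the execution has genuine gaps.

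The decisive error is in your lattice/flip step. Reversing a ccw cycle $C$ of $M$, even with the weight swap you describe, does \emph{not} preserve face weights: for a $1$-way edge of $C$ the outgoing half-edge of weight $-1$ has the exterior of $C$ on its right before the flip and the interior of $C$ on its right after, so every face adjacent to $C$ from the inside loses weight and every face adjacent from the outside gains weight. Worse, two $1$-orientations of $M$ need not have the same set of $1$-way edges (the edge of a degree-$4$ face that carries its $-1$ charge can change from one $1$-orientation to another), so the ``symmetric difference'' of two $1$-orientations is not supported on directed cycles of $M$ in the first place. The correct lattice and minimality theory lives not on $M$ but on an auxiliary map in which each face is replaced by a star-vertex, so that the face-weight constraints become vertex constraints; this is precisely why Section~\ref{sec:proofs} introduces the star map $\siM$, the regular orientations of Proposition~\ref{lem:b-regular-orient}, and the Transfer Lemma~\ref{lem:transfer}. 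The same objection applies to your existence step: the global count $f_4=v_{\mathrm{int}}+b+r$ is only condition (i) of Lemma~\ref{lem:exists-alpha}; the subset condition (ii) --- where bipartiteness/internal girth actually enters, via the argument of Lemma~\ref{lem:b-orient} --- cannot even be formulated for the face constraints without the star-map construction, which you never set up. As written, neither existence nor uniqueness is established. The efficient fix is the paper's: close the marked degree-$2$ internal face back into an edge and invoke Proposition~\ref{prop:2outerquad}.
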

\begin{proof}
This is a corollary of Proposition~\ref{prop:2outerquad}. Indeed, 
 seeing $M$ as a map $D$ in $\cDqua$ where an edge $e$ is opened into an internal face $f_1$ of degree $2$, the canonical biorientation of $M$
is directly derived from the canonical biorientation of $D$, using the rules shown in Figure~\ref{fig:blow2face}. 
\end{proof}

\fig{width=.7\linewidth}{blow2face}{Transferring the biorientations and weights when blowing an edge into an internal face of degree $2$.}

For $M\in\Qbb{2a}$, we call \emph{1-orientation} of $M$ a consistent $\ZZ$-biorientation with weights in $\{-1,0,1\}$ such that: 
\begin{compactitem}
\item every internal edge has weight $0$, 
\item every internal vertex has weight (and indegree) $1$,
\item every non-marked internal face (of degree $4$) has weight $-1$, while the marked internal face (of degree $2$) has weight $0$, 
\item every non-marked boundary of length $2i$ has weight (and indegree) $i+1$, while the marked boundary (of length $2a$) has weight (and indegree) $a-1$.
\end{compactitem}

\begin{prop}\label{prop2}
Let $M$ be a map in $\Qbb{2a}$ considered as a plane map by taking the outer face to be the marked internal face. Then $M$ has a 1-orientation in $\cOw_2$ if and only if it is reduced (i.e., is in $\Qbbr{2a}$). In this case, $M$ has a unique 1-orientation in $\cOw_2$. We call it the \emph{canonical biorientation} of $M$. 
\end{prop}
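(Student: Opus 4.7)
Plan:

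I would prove this in three parts, paralleling the structure of Proposition~\ref{prop:2outerquad} but with the outer face now taken to be the marked internal face of degree~$2$.

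For \emph{uniqueness}, if $\omega_1,\omega_2\in\cOw_2$ are two $1$-orientations of $M$, their difference, read as a $\ZZ$-valued chain on edges, has zero net indegree at every internal vertex, zero net weight at every face, and vanishes on the boundary edges (which are rigidly $1$-way with weights $(0,1)$). It therefore decomposes as a sum of oriented simple cycles; the minimality of both $\omega_i$ (no ccw cycle in $\cO_2$) forces the sum to vanish, so $\omega_1=\omega_2$.

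For \emph{necessity}, suppose $M$ has a blocking cycle $C$ strictly enclosing the marked internal face, and let $L_C$ be the subregion bounded by $C$ that contains the marked face (so $L_C$ contains no boundary face). Euler's formula gives $f-1$ internal vertices strictly inside $L_C$ and $f$ internal faces (one of degree~$2$, the remaining $f-1$ of degree~$4$). Assuming a $1$-orientation $\omega\in\cOw_2$ exists, summing the vertex weight conditions (each internal vertex has weight~$1$) against the face weight conditions ($-1$ per non-marked internal face, $0$ for the marked one), together with the fact that internal edges carry total weight~$0$, pins down the orientation of the $C$-edges. The budget equation forces both edges of $C$ to be $1$-way with their $L_C$-side half-edges ingoing, and these two directed edges then form a ccw cycle around $L_C$ in the plane, contradicting the minimality required by $\cOw_2$.

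For \emph{existence} in the reduced case, I would reduce to Proposition~\ref{prop:2outerquad} by blowing down the marked internal face of degree~$2$ to a single edge (the inverse of Lemma~\ref{lem:edge_blow}), producing a bipartite map $M'$ with all internal faces of degree~$4$ and a single marked boundary of length~$2a$. A suitable analog of Proposition~\ref{prop:2outerquad} for this class yields a canonical biorientation on $M'$, which I would pull back to $M$ via the local rule of Figure~\ref{fig:blow2face}, obtaining a $1$-orientation satisfying all vertex- and face-weight conditions. Verifying that the result lies in $\cOw_2$ uses reducedness in an essential way: any ccw cycle in $M$'s biorientation would pull back to a ccw cycle in $M'$'s (impossible by construction), and accessibility from the two outer vertices follows from the absence of blocking cycles enclosing the marked face. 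The main obstacle is the necessity direction, where the Euler-type accounting at $C$ requires delicate bookkeeping of ingoing versus outgoing half-edges and the planar convention must then be invoked to confirm that the resulting directed $2$-cycle is ccw rather than cw; the existence part in the reduced case is a comparatively routine adaptation of Proposition~\ref{prop:2outerquad}.
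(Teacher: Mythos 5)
Your uniqueness sketch is fine in spirit (it is essentially the argument behind Lemma~\ref{lem:uniqueminimal}, which the paper imports from \cite{BeFu12}), but both of your other two parts have genuine gaps. For the \emph{necessity} of reducedness: first, your parenthetical ``$L_C$ contains no boundary face'' is false --- the blocking condition only forces boundary faces \emph{incident to a vertex of $C$} into $R_C$, so $L_C$ may well contain boundary faces and your Euler count must include their weights. Second, and more seriously, the budget equation does not produce a ccw cycle. Carrying out the count (with the paper's convention that the weight of a face is the total weight of outgoing half-edges having that face on their right), one finds that the sum of the weights of the ingoing half-edges pointing from the interior of $L_C$ into the two contour vertices equals the sum of the (non-positive) weights of the outgoing half-edges of $C$ having the $L_C$-side face on their right; both sums are therefore zero. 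This forces any $1$-way edge of $C$ to have $R_C$ --- which is the \emph{bounded} side, since the outer face $f_e$ lies in $L_C$ --- on its right, i.e.\ a \emph{cw} configuration (and the edges of $C$ may also be $0$-way), so minimality is never violated. The actual contradiction must come from \emph{accessibility}: the count shows that no edge strictly inside $L_C$ is directed toward a contour vertex, so the vertices strictly inside $L_C$ (in particular those of $f_e$, when $C$ is not the contour of $f_e$) cannot reach $R_C$, contradicting the requirement that an orientation in $\cOw_2$ be accessible from the outer vertices. This is exactly how the paper argues in Claim~\ref{claim:necessity-reduced}.

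For \emph{existence} in the reduced case, the reduction you propose is circular. Collapsing the marked digon produces a map whose only marked boundary has degree $2a$; Proposition~\ref{prop:2outerquad} applies only when a boundary of degree~$2$ is taken as the outer face and produces orientations in $\wO_{-2}$, whereas here the outer face is an \emph{internal} digon and the target class is $\wO_{2}$ --- a different sign of $d$, hence different minimality/accessibility constraints in the master bijection. The ``suitable analog'' you invoke is precisely the statement being proved (its general form is Proposition~\ref{prop:Adaori}), and no argument is given for it. In the paper this is where all the work lies: one subdivides the edges, passes to the star map, builds a regular orientation of an auxiliary $2b$-angulation via the Hall-type criterion of Lemma~\ref{lem:exists-alpha} (Proposition~\ref{lem:b-regular-orient}, whose accessibility statement is where reducedness enters, through the notion of $2b$-blocked vertices), proves that the minimal such orientation is transferable (Claim~\ref{claim:transferable}), and then applies the transfer Lemma~\ref{lem:transfer} together with the parity Lemma~\ref{lem:parity}. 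Calling this step ``a comparatively routine adaptation'' of Proposition~\ref{prop:2outerquad} misplaces where the difficulty of the proposition actually sits.
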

The proof of Proposition~\ref{prop2} is delayed to Section~\ref{sec:proofs}. We denote by $\cUqua$ the set of mobiles corresponding to (canonically oriented) maps in $\Qbb{2}$ via the master bijection. By Theorem~\ref{theo:master_boundary}, these are the boundary $\ZZ$-mobiles with weights in $\{-1,0,1\}$ satisfying the following properties (which imply that the excess is $-2$):
\begin{compactitem}
\item every edge has weight $0$ (hence, is either black-black of weights $(0,0)$, or black-white of weights $(-1,1)$), 
\item every black vertex has degree $4$ and weight $-1$ (hence has a unique white neighbor), except for a unique black vertex of degree $2$ and weight $0$,
\item for all $i\geq 0$, every white vertex of degree $i+1$ carries $2i$ legs. 
\end{compactitem}
We also denote $\cUquav$ the set of mobiles obtained from mobiles in $\cUqua$ by marking one of the corners of the black vertex of degree $2$.

For $a\geq 1$, we denote by $\cVqua^{(2a)}$ the set of mobiles corresponding to (canonically oriented) maps in $\Qbbr{2a}$. These are the boundary $\ZZ$-mobiles with weights in $\{-1,0,1\}$ satisfying the following properties (which imply that the excess is $2$):
\begin{compactitem}
\item every edge has weight $0$ (hence is either black-black of weights $(0,0)$, or black-white of weights $(-1,1)$), 
\item every black vertex has degree $4$ and weight $-1$ (hence has a unique white neighbor), 
\item there is a marked white vertex of degree $a-1$ which carries $2a$ legs,
\item for all $i\geq 0$, every non-marked white vertex of degree $i+1$ carries $2i$ legs.
\end{compactitem}
We also denote $\cVquav^{(2a)}$ the set of rooted mobiles obtained from from mobiles in $\cVqua^{(2a)}$ by marking one of the $2a$ legs of the marked white vertex.

Propositions~\ref{prop:Equad} and~\ref{prop2} together with the master bijection (Theorem~\ref{theo:master_boundary}) and Lemma~\ref{lem:decompose-Qvv} finally give:
\begin{theo}\label{theo:bipartite_2annulardiss}
The set $\Qbb{2}$ (resp. $\Qvv{2}$) of quadrangulations is in bijection with the set $\cUqua$ (resp. $\cUquav$) of $\ZZ$-mobiles. 
%OB. Removed
%The bijection is such that if the map $M$ corresponds to the mobile $T$, then each inner boundary of length $2i$ in $M$ corresponds to a white vertex in $T$ of weight (and degree) $i+1$, and each internal vertex of $M$ corresponds to a white leaf in $T$.
Similarly, for all $a\geq 1$, the set $\Qbbr{2a}$ (resp. $\Qvvr{2a}$) of quadrangulations is in bijection with the set $\cVqua^{(2a)}$ (resp. $\cVquav^{(2a)}$) of $\ZZ$-mobiles. 
%OB. Removed
%The bijection is such that if the map $M$ corresponds to the mobile $T$, then each non-marked boundary of length $2i$ in $M$ corresponds to a non-marked white vertex in $T$ of weight (and degree) $i+1$, and each internal vertex of $M$ corresponds to a non-marked white leaf in $T$.

%% OB. Added
Finally, the set $\Qvv{2a}$ of quadrangulations is in bijection with the set $\cUquav\times\cVquav^{(2a)}$ of pairs of $\ZZ$-mobiles. The bijection is such that if the map $M$ corresponds to the pair of $\ZZ$-mobiles $(U,V)$, then each non-marked boundary of length $2i$ in $M$ corresponds to a non-marked white vertex of $U\cup V$ of weight (and degree) $i+1$, and each internal vertex of $M$ corresponds to a non-marked white leaf of $U\cup V$.
\end{theo}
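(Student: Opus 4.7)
The plan is to assemble three ingredients already available: the canonical biorientation propositions (Propositions~\ref{prop:Equad} and~\ref{prop2}), the master bijection $\hPhi$ for maps with boundaries (Theorem~\ref{theo:master_boundary}), and the canonical decomposition (Lemma~\ref{lem:decompose-Qvv}). First, for the unrooted bijection $\Qbb{2}\leftrightarrow \cUqua$, I take $M\in\Qbb{2}$, view it as a plane map with marked boundary face as outer face, and invoke Proposition~\ref{prop:Equad} to endow $M$ with its unique canonical $1$-orientation in $\wO_{-2}$; applying $\hPhi$ then yields a boundary $\ZZ$-mobile of excess $-2$. Reading off the parameter-tracking listed just after Theorem~\ref{theo:master_boundary}, one checks that every edge of the mobile has weight $0$, that each black vertex inherits the degree and weight of its internal face (so degree $4$ and weight $-1$ for non-marked inner faces, degree $2$ and weight $0$ for the marked one), and that each inner boundary of length $2i$ becomes a boundary white vertex of degree $i+1$ carrying $2i$ legs. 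This shows the image lies in $\cUqua$. Conversely, a direct sum computation using the tree relation and the degree/weight conditions shows that any mobile in $\cUqua$ has excess $-2$, so by Theorem~\ref{theo:master_boundary} its preimage under $\hPhi$ lies in $\wO_{-2}$, and Proposition~\ref{prop:Equad} identifies this preimage with a canonically bioriented map in $\Qbb{2}$.

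Next, the bijection $\Qbbr{2a}\leftrightarrow \cVqua^{(2a)}$ follows the same scheme using Proposition~\ref{prop2}: the plane-map view takes the marked internal face (of degree $2$) as outer face, the excess is $+2$, and the former marked boundary $f_s$ becomes an inner boundary of length $2a$, indegree $a-1$, and weight $a-1$, which Theorem~\ref{theo:master_boundary} converts into the marked white vertex of $\cVqua^{(2a)}$ with $2a$ legs, degree $a-1$, and weight $a-1$; all other conditions are verified as above, and the converse is analogous. For the rooted refinement $\Qvvr{2a}\leftrightarrow\cVquav^{(2a)}$, Theorem~\ref{theo:master_boundary} identifies the $2a$ corners of an inner boundary of length $2a$ with the $2a$ legs of the corresponding boundary white vertex; hence marking a corner in the marked boundary of $M\in\Qvvr{2a}$ amounts to marking a leg of the marked white vertex in $\cVquav^{(2a)}$. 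The rooted case $\Qvv{2}\leftrightarrow\cUquav$ is more delicate, since the marked boundary is the outer face which is erased by the master bijection. I would resolve it by tracking the corner mark through the local rule of Figure~\ref{fig:local_rule_edge_biori} applied to the outer edges, checking that it transports canonically to a corner of the degree-$2$ black vertex produced by the marked inner face of $M$, in a way compatible with the bipartition convention fixed in Lemma~\ref{lem:decompose-Qvv}.

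Finally, for the combined bijection $\Qvv{2a}\leftrightarrow\cUquav\times\cVquav^{(2a)}$, Lemma~\ref{lem:decompose-Qvv} provides a bijection $\Qvv{2a}\leftrightarrow \Qvv{2}\times\Qvvr{2a}$, which I compose with the two rooted bijections of the previous paragraph. The parameter statements follow from additivity: the two vertices on the maximal blocking cycle $C$ lie on a boundary of each of $M_1$ and $M_2$ and are hence not counted as internal in either, while every internal vertex and every non-marked inner boundary of $M$ appears in exactly one of the two pieces and therefore, via the coordinate bijection, as a white leaf or as a non-marked white vertex of the corresponding mobile. The main obstacle I anticipate is precisely the rooting step for $\Qvv{2}\leftrightarrow\cUquav$: the outer face being erased by the master bijection, matching its two corners with the two corners of the degree-$2$ black vertex requires the bipartite-block convention of Lemma~\ref{lem:decompose-Qvv} to be exactly what makes the overall composition a well-defined bijection, and verifying this forces careful bookkeeping of the local rules on outer edges.
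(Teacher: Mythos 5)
Your proposal is correct and follows exactly the route the paper takes: it assembles Propositions~\ref{prop:Equad} and~\ref{prop2}, the master bijection of Theorem~\ref{theo:master_boundary}, and the decomposition of Lemma~\ref{lem:decompose-Qvv}, which is precisely how the paper derives the theorem (stated there as an immediate consequence of these results). The extra care you take with the rooting of $\Qvv{2}\leftrightarrow\cUquav$ is a reasonable elaboration of a point the paper leaves implicit, but it does not change the argument.
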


Theorem~\ref{theo:bipartite_2annulardiss} is illustrated in Figure~\ref{fig:bij_quad_diss_complete}.

\section{Bijections for triangulations with boundaries}\label{sec:bij-triang}
In this section we adapt the strategy of Section~\ref{sec:bij-quad} to  \emph{triangulations with boundaries}, that is, maps with boundaries such that every internal face has degree $3$. We start with the simpler case where one of the boundaries has degree 1 before treating the general case.

\subsection{Triangulations with at least one boundary of length $1$}\label{sec:triang_1}
%Define a \emph{1-outer triangulated dissection} as a triangulated dissection with a distinguished boundary face of degree $1$, taken as the outer face. 
Let $\cDtri$ be the set of triangulations with boundaries, with a marked boundary face of degree 1. We think of maps in $\cDtri$ as \emph{plane maps} by taking the marked boundary as the outer face. For $M\in\cDtri$, we call \emph{1-orientation} of $M$ a consistent $\ZZ$-biorientation with weights in $\{-2,-1,0,1\}$ and with the following properties:
\begin{compactitem}
\item every internal edge has weight $-1$ (i.e., is either $0$-way of weights $(-1,0)$, or $1$-way of weights $(-2,1)$), 
\item every internal vertex has weight $1$,
\item every internal face has weight $-2$,
\item every inner boundary of length $i$ has weight (and indegree) $i+1$, and the outer boundary has weight~$0$.
\end{compactitem}
Similarly as in Section~\ref{sec:outer_degree_2} we have the following proposition proved in Section~\ref{sec:proofs}.

\begin{prop}\label{prop3}
Every $M\in\cDtri$ has a unique 1-orientation in $\wO_{-1}$. We call it the \emph{canonical biorientation} of $M$. 
\end{prop}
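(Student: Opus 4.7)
\emph{Proof plan.}
The plan is to adapt the two-step strategy used for the analogous quadrangulation statements (Propositions~\ref{prop:2outerquad} and~\ref{prop2}), whose full proofs are postponed to Section~\ref{sec:proofs}. The argument splits into \textbf{(i)} existence and uniqueness of a biorientation of $M$ satisfying the local 1-orientation constraints, and \textbf{(ii)} verification that any such biorientation is automatically accessible from every outer vertex and almost-minimal, hence lies in $\wO_{-1}$.

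For~\textbf{(i)}, I would first check global consistency by summing the prescribed weights over all vertices (internal and boundary), all internal faces, and all internal edges, and confirming that the totals match; this is an Euler-type identity in which the fact that every internal face has degree $3$ and every internal edge has weight $-1$ enters crucially. Next, I would encode a 1-orientation as an $\alpha$-orientation of an auxiliary multigraph $\widetilde{M}$, obtained from $M$ by replacing each internal edge by a small gadget of parallel arcs whose directions collectively record the pair of half-edge weights at that edge (so that, e.g., a $1$-way internal edge of weights $(-2,1)$ corresponds to all three arcs of the gadget being oriented toward the ingoing endpoint, and a $0$-way edge of weights $(-1,0)$ to a balanced orientation of the gadget). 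Under this encoding, the weight conditions at internal vertices (weight $1$) and at boundaries (indegree equal to length plus one, or $0$ for the outer boundary) translate into prescribed indegrees in $\widetilde{M}$, and existence follows from the general existence theorem for $\alpha$-orientations of plane graphs~\cite{BeFu12}, whose hypothesis is a Hall-type condition on arbitrary simply-connected regions. Uniqueness follows from the classical symmetric-difference argument: the difference of two 1-orientations, viewed in the $\alpha$-orientation setting, decomposes into a union of simple directed cycles, at least one of which would have to be ccw in one of the two biorientations, contradicting the almost-minimality enforced by $\wO_{-1}$.

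For~\textbf{(ii)}, consistency of the $\ZZ$-biorientation with the boundary edges is built into the local constraints of a 1-orientation. Accessibility is particularly clean here because the outer boundary has length~$1$ and so is incident to a single outer vertex $v_0$: accessibility of every internal vertex from $v_0$ follows from a standard ``no-backflow'' argument, since any set of internal vertices receiving no ingoing weighted arc from the outside would violate the prescribed vertex/boundary weights. Almost-minimality is then derived by inspecting how the local weight constraints force the orientation of edges along any hypothetical interior ccw cycle; this is the direct triangulation analogue of the minimality step carried out for quadrangulations.

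The main obstacle is the Hall-type inequality needed in~\textbf{(i)}: for every simply-connected union $R$ of internal faces of $M$, one must show that the total arc-slot count inside $R$ does not exceed the combined indegree that the vertices of $R$ and the inner boundaries contained in $R$ are allowed to absorb, with strict inequality unless $R$ meets the outer boundary. This region-by-region estimate is where the specific combinatorial structure of triangulations with pairwise vertex-disjoint simple boundaries enters crucially, and it will parallel the key technical step of the corresponding proofs for quadrangulations in Section~\ref{sec:proofs}.
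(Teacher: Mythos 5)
Your high-level architecture (a Hall-type existence criterion, uniqueness via the minimal element of the cycle-reversal lattice, then accessibility by a counting argument) is indeed the skeleton of the paper's proof, but the concrete reduction you propose does not work, and the step it skips is precisely the hard part. A $1$-orientation imposes, besides the vertex and edge weights, a weight $-2$ on every internal face, and this is a condition on the \emph{outgoing} half-edges having that face on their right. Replacing each internal edge by parallel arcs between its two endpoints destroys exactly the information needed to state this condition (namely, on which side of the edge the face lies), and no choice of prescribed indegrees at the vertices of your multigraph $\widetilde{M}$ encodes it; moreover the vertex weight of a $\ZZ$-biorientation is the sum of the \emph{ingoing} half-edge weights only, so it does not determine the sum of all half-edge weights at a vertex, which is what a parallel-arc indegree would record. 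Relatedly, Lemma~\ref{lem:exists-alpha} only produces $\NN$-biorientations (no negative weights), so some genuine mechanism is needed to create the weights $-1$ and $-2$. Finally, your step~(ii) asserts that every $1$-orientation is automatically almost-minimal; this is false in general (the $1$-orientations form a nontrivial lattice under cycle reversal), and it is membership in $\wO_{-1}$ that singles out one of them.

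The paper's proof of Proposition~\ref{prop3} (obtained as the case $d=1$ of Proposition~\ref{prop:Ddori}) supplies the missing mechanism in several stages: it subdivides each edge of $M$ into four to obtain a bipartite map $N$ of large internal girth, then passes to the star map $\siN$ (a vertex inserted in each internal face and joined to all its corners), where the face-weight condition becomes an indegree condition at the star-vertices. Existence of the resulting regular $\NN$-orientation is proved by filling each internal face with an auxiliary $2b$-angulation and invoking Lemma~\ref{lem:exists-alpha} (this is Proposition~\ref{lem:b-regular-orient}), and uniqueness of the almost-minimal accessible one follows from Lemma~\ref{lem:uniqueminimal}. The negative weights are then produced by the transfer rule of Lemma~\ref{lem:transfer}, which moves weight from star-edges onto the adjacent $N$-edges; this requires the delicate local Claim~\ref{claim:transferable} that the almost-minimal regular orientation is transferable, together with the parity Lemma~\ref{lem:parity} to divide the weights by two at the end. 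The star map combined with the transfer lemma is the idea your proposal is missing; without it the face-weight constraints and the negative half-edge weights cannot be reached from an $\alpha$-orientation framework.
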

%\ni The proof is delayed to Section~\ref{sec:proof_prop3}.

We denote by $\cTtri$ the set of mobiles corresponding to (canonically oriented) maps in $\cDtri$ via the master bijection. By Theorem~\ref{theo:master_boundary}, these are the boundary $\ZZ$-mobiles satisfying the following properties (which readily imply that the weights are in $\{-2,-1,0,1\}$, and the excess is $-1$): 
\begin{compactitem}
\item every edge has weight $-1$ (hence is either black-black of weights $(-1,0)$, or is black-white of weights $(-2,1)$),
\item every black vertex has degree $3$ and weight $-2$,
\item for all $i\geq 0$, every white vertex of degree $i+1$ carries $i$ legs.
\end{compactitem}

\fig{width=\linewidth}{bij_diss_triang_1outer}{(a) A triangulation in $\cDtri$ endowed with its canonical biorientation, where crosses indicate half-edges of weight $-1$ (1-way edges have weights $(-1,2)$ if internal and weights $(0,1)$ if boundary, 0-way edges have weights $(-1,0)$). 
(b) The triangulation superimposed with the corresponding mobile. (c) The reduced boundary mobile (with $i$ legs at each white vertex of degree $i+1$, and with again 
the convention that half-edges of weight $-1$ are indicated by a cross).} 

To summarize, we obtain the following bijection for triangulations with a boundary of length 1 (see Figure~\ref{fig:bij_diss_triang_1outer} for an example):
\begin{theo}\label{theo:1outerdiss}
The set $\cDtri$ is in bijection with the set $\cTtri$ via the master bijection. 
If $M\in\cDtri$ and $T\in\cTtri$ are associated by the bijection, then each inner boundary of length $i$ in $M$ corresponds to a white vertex in $T$ of degree $i+1$, and each internal vertex of $M$ corresponds to a white leaf in $T$.
\end{theo}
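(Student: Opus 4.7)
The plan is to compose Proposition~\ref{prop3} with Theorem~\ref{theo:master_boundary}, identify the image, and then read off the parameter correspondences. First, by Proposition~\ref{prop3}, sending each $M\in\cDtri$ to its canonical biorientation is an injection $\iota\colon\cDtri\hookrightarrow\wO_{-1}$. Second, Theorem~\ref{theo:master_boundary} provides the bijection $\hPhi\colon\wO_{-1}\to\wB_{-1}$. Composing yields a map $\hPhi\circ\iota\colon\cDtri\to\wB_{-1}$, and the heart of the proof is to show that its image equals $\cTtri$.

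For the forward inclusion $\hPhi\circ\iota(\cDtri)\subseteq\cTtri$, I would translate the four defining conditions of a 1-orientation through the local rule of Figure~\ref{fig:local_rule_edge_biori} and the parameter dictionary stated just before Theorem~\ref{theo:master_boundary}. Internal faces of degree $3$ and weight $-2$ become black vertices of degree $3$ and weight $-2$; internal vertices (indegree $1$, weight $1$) become non-boundary white leaves; and inner boundaries of length $i$ with indegree $i+1$ become boundary white vertices of degree $i+1$ carrying $i$ legs. Finally, each internal edge has total weight $-1$ in the canonical biorientation (either $(-1,0)$ in the 0-way case, or $(-2,1)$ in the 1-way case), and the local rule preserves this total weight on every edge of the mobile. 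This matches exactly the defining conditions of $\cTtri$.

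For the reverse inclusion $\cTtri\subseteq\hPhi\circ\iota(\cDtri)$, I would take $T\in\cTtri\subseteq\wB_{-1}$ and set $O=\hPhi^{-1}(T)\in\wO_{-1}$. The uniform black-vertex degree $3$ in $T$ forces every internal inner face of $O$ to have degree $3$ (the outer face being the marked boundary face of degree $1$ by the choice of excess $d=-1$), so $O$ is a triangulation with boundaries, and the outer boundary has length $1$. The remaining edge, vertex and boundary conditions of $\cTtri$ translate back into the four defining conditions of a 1-orientation on $O$. By the uniqueness clause of Proposition~\ref{prop3}, this orientation is canonical, so $O=\iota(M)$ for some $M\in\cDtri$.

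The parameter correspondences in the second sentence of the theorem are immediate from the tracking bullets before Theorem~\ref{theo:master_boundary}: an inner boundary of length $i$ has indegree $i+1$, hence corresponds to a white vertex of degree $i+1$; an internal vertex has indegree $1$, hence corresponds to a white vertex of degree $1$, i.e.\ a white leaf. The main obstacle is the bookkeeping in the reverse direction, namely checking that every condition in the $\cTtri$ list correctly reconstitutes the four conditions of a 1-orientation (in particular the face-weight $-2$ condition, which must be recovered from the edge-weight and degree conditions on the black vertices); the fact that $\hPhi^{-1}(T)$ has simple, pairwise vertex-disjoint boundaries and a marked boundary of degree $1$ is automatic from the general master bijection applied to $\wO_{-1}$, so no additional combinatorial work is needed on that front.
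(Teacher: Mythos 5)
Your proposal is correct and follows essentially the same route as the paper: Theorem~\ref{theo:1outerdiss} is obtained there exactly by composing Proposition~\ref{prop3} with the master bijection of Theorem~\ref{theo:master_boundary} and translating the four conditions of a 1-orientation into the three local conditions defining $\cTtri$ via the parameter dictionary (the paper simply defines $\cTtri$ as the image and asserts the characterization, so your explicit two-inclusion check is a slightly more detailed write-up of the same argument). The only cosmetic remark is that the face-weight $-2$ condition need not be ``recovered'' in the reverse direction --- it is literally the black-vertex weight condition in the definition of $\cTtri$ --- and the excess $-1$ claim you use when writing $\cTtri\subseteq\wB_{-1}$ is the same parenthetical assertion the paper makes.
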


\subsection{Triangulations with arbitrary boundary lengths}\label{sec:general_case_tri_diss}
We now adapt the approach of Section~\ref{sec:general_case_quad_diss} (decomposing maps into two pieces) to triangulations. 
For $a\geq 1$, we denote by $\Tr{a}$ the set of triangulations with boundaries with a marked boundary face of degree $a$.
We denote by $\Tb{a}$ the set of maps obtained from maps in $\Tr{a}$ by also marking an arbitrary half-edge (either boundary or internal).
We denote by $\Tbb{a}$ the set of maps with boundaries having a marked boundary face of degree $a$ and a marked internal face of degree 1, such that all the non-marked internal faces have degree 3. Lastly, we denote $\Tvv{a}$ the set of maps obtained from $\Tbb{2a}$ by marking a corner in the marked boundary face. 

 Given a map $M$ in $\Tb{a}$, we obtain a map $M'$ in $\Tbb{a}$ by the operation illustrated in Figure~\ref{fig:open_half_edge}, which we call \emph{half-edge-opening}. In words, we ``open'' the edge containing the marked half-edge $h$ into a face $f$, and then at the corner of $f$ corresponding to $h$ we insert a loop bounding the marked internal face (of degree 1). This operation is clearly a bijection for $a>1$:
\begin{lem}\label{lem:edge_blow_tri}
For all $a>1$, the half-edge-opening is a bijection between $\Tb{a}$ and $\Tbb{a}$  which preserves the number of internal vertices and the boundary lengths.
\end{lem}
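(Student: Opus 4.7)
The plan is to mimic the proof of Lemma~\ref{lem:edge_blow} (the quadrangulation analogue) by constructing an explicit inverse to the half-edge-opening, which I will call the \emph{half-edge-closing}, and verifying bijectivity by a local analysis.

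Given $M' \in \Tbb{a}$, I first locate the unique loop $\ell$ bounding the marked internal face $F_m$ of degree $1$, let $v$ be its incident vertex, and let $f'$ be the face on the other side of $\ell$. The inverse operation removes $\ell$ (merging $F_m$ and $f'$ into a face $g$ of degree $\deg(f')-1$) and, when $g$ is a digon, identifies its two bounding edges into a single edge $e$. The marked half-edge is then the half-edge of $e$ at $v$ corresponding to the position formerly occupied by $\ell$.

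The main claim is that $f'$ is necessarily a non-marked internal face of degree $3$, so that $g$ is a digon. Indeed, $f'$ cannot equal $F_m$ (the loop $\ell$ bounds $F_m$ on only one side), and $f'$ cannot be any boundary face (including the marked boundary), because the contour of a boundary face is a simple cycle and hence cannot contain the loop $\ell$. Thus $f'$ is a non-marked internal face, which by definition has degree $3$. It remains to check that the two non-$\ell$ edges of $f'$ are distinct, so that closing $g$ yields a valid map in $\Tb{a}$; this is precisely where the hypothesis $a > 1$ enters, ruling out the degenerate configuration analogous to the exceptional map $\epsilon \in \Qbb{2}$ encountered for quadrangulations.

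Mutual inversion and the preservation properties then follow by direct local verification: the forward operation creates exactly the local configuration (a degree-$1$ face bounded by a loop $\ell$ adjacent to a distinguished triangle $f'$) that the inverse operation consumes, and the position of $\ell$ within $f'$ encodes precisely the data of the marked half-edge of $M$. Vertices are never added or removed by either operation, and since both modifications are confined to a small neighborhood of $e$ (respectively $\ell$), every boundary contour is left unchanged, so the boundary lengths are preserved. The main obstacle in this plan is the subtle local case analysis ensuring that $g$ is a genuine digon (two distinct bounding edges) and that closing it yields a valid element of $\Tb{a}$, both of which rely on the hypothesis $a > 1$.
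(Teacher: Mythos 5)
Your overall strategy (constructing an explicit inverse ``half-edge-closing'' and checking it is well defined) is the natural one, and the paper itself offers nothing beyond the assertion that the operation ``is clearly a bijection for $a>1$'', so the extra detail is welcome. However, one step of your case analysis is wrong, and it is exactly the step where the hypothesis $a>1$ has to do its work. You claim that $f'$ cannot be a boundary face ``because the contour of a boundary face is a simple cycle and hence cannot contain the loop $\ell$''. Under the paper's definition a face is simple when its number of incident vertices equals its degree, so a boundary face of degree $1$ bounded by a single loop \emph{is} simple; such faces occur throughout the paper (the marked boundary face of every map in $\cDtri$, and in particular the exceptional map $\lambda\in\Tbb{1}$, where the face on the other side of $\ell$ is precisely the marked boundary face). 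The correct argument is: if $f'$ were a boundary face, its contour would have to be exactly the loop $\ell$ (a simple contour containing a loop consists of that loop alone), forcing the whole map to be the single loop with the two faces $F_m$ and $f'$; such a map contains no boundary face of degree $a>1$, hence cannot lie in $\Tbb{a}$. This is where $a>1$ enters.

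By contrast, the place where you invoke $a>1$ --- ensuring that the two non-$\ell$ edge-sides of $f'$ belong to distinct edges --- needs no hypothesis at all: if they were the two sides of a single edge $e'$, then $e'$ would be a bridge whose far endpoint has degree $1$ and the contour of $f'$ would be $\ell,e',e'$, which again forces the whole map to consist of $\ell$ and $e'$ only, with no boundary face at all; this is impossible in $\Tbb{a}$ for every $a\geq 1$. As written, your proof would apply verbatim to $a=1$ (since the one step you tie to $a>1$ is in fact unconditional), and would then ``prove'' a statement the paper explicitly notes fails for $a=1$ because of $\lambda$. The repair is only a relocation of where the hypothesis is used, but as it stands the argument does not correctly isolate why $a>1$ is needed.
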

%% This is clearly a bijection (preserving the number of internal vertices and the boundary lengths) for all $a>1$ so that
%% \begin{equation}\label{eq:edge_blow_tri}
%% \Tb{a}\simeq \Tbb{a}.
%% \end{equation}
Note however that $\Tbb{1}$ contains a map $\lambda$ with 1 edges (a loop separating a boundary and an internal face) which is not obtained from a map in $\Tb{1}$, so that the bijection is between $\Tb{1}$ and $\Tbb{1}\setminus \{\lambda\}$. 

\fig{width=.6\linewidth}{open_half_edge}{The operation of opening an half-edge $h$: it yields a new face of degree $1$ surrounded by a new face of degree $3$ (the case where $h$ is on a loop is illustrated on the right).}

Next, we describe a canonical decomposition of maps in $\Tbb{a}$ illustrated in Figure~\ref{fig:bij_tri_diss_complete}(a)-(b). For a cycle $C$ of a map $M\in \Tbb{a}$, we denote by $R_C$ and $L_C$ the regions bounded by $C$ containing and not containing the marked boundary face $f_s$ respectively. The cycle $C$ is said to be \emph{blocking} if $C$ has length $1$ (that is, is a loop), the marked internal face is in $L_C$, and any boundary face incident to a vertex of $C$ is in $R_C$. 
Note that the contour of the marked internal face is a blocking cycle. It is easy to see that there exists a unique blocking cycle $C$ such that $L_C$ is maximal (that is, contains $L_{C'}$ for any blocking cycle $C'$). We call $C$ the \emph{maximal blocking cycle} of $M$. The maximal blocking cycle is indicated in Figure~\ref{fig:bij_tri_diss_complete}(a). The map $M$ is called \emph{reduced} if its maximal blocking cycle is the contour of the marked internal face, and we denote by $\Tbbr{a}$ and $\Tvvr{a}$ the subsets of $\Tbb{a}$ and $\Tvv{a}$ corresponding to reduced maps.

\fig{width=\linewidth}{bij_tri_diss_complete}{(a) A map in $\Tbb{3}$: the marked boundary face is the outer face, the marked internal face is indicated by a square, and the maximal blocking cycle $C$ is drawn in bold. (b) The maps $M_1$ and $M_2$ resulting from cutting $M$ along $C$, each represented as a plane map endowed with its canonical biorientation (the marked inner face in each case is indicated by a square, each directed edge has weights $(-2,1)$ and each undirected edge has weights $(-1,0)$, with a cross on the half-edge of weight $-1$). (c) The mobiles associated to $M_1$ and $M_2$. (d) The reduced boundary mobiles associated to $M_1$ and $M_2$, where the marked vertex is represented by a square. Black-white edges
have weights $(-2,1)$, and black-black edges have weights $(-1,0)$, with a cross on the half-edge of weight $-1$.}

We now consider the two maps obtained from a map $M$ in $\Tbb{a}$ by cutting the sphere along the maximal blocking cycle $C$, as illustrated in Figure~\ref{fig:bij_tri_diss_complete}(b).
Precisely, we denote by $M_1$ the map obtained from $M$ by replacing $R_C$ by a single marked boundary face (of degree 1), and we denote by $M_2$ the map obtained from $M$ by replacing $L_C$ by a single marked internal face (of degree 1). It is clear that $M_1$ is in $\Tbb{1}$, while $M_2$ is in $\Tbbr{a}$. The decomposition $M\mapsto (M_1,M_2)$ is bijective (both for rooted and unrooted maps because $\Tvv{1}\simeq \Tbb{1}$), and we call it the \emph{canonical decomposition of maps in $\Tvv{a}$}. We summarize:

\begin{lem}\label{lem:decompose-Tvv}
For all $a\geq 1$, the canonical decomposition is a bijection between $\Tvv{a}$ and $\Tbb{1}\times\Tvvr{a}$.
\end{lem}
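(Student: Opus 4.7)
The plan is to verify that the canonical decomposition $M\mapsto(M_1,M_2)$ sends $\Tvv{a}$ into $\Tbb{1}\times\Tvvr{a}$, construct an explicit gluing inverse, and check mutual inversion. Throughout, I would use the well-definedness of the maximal blocking cycle $C$, which follows from the standard observation that for any two blocking cycles $C_1,C_2$ there is a blocking cycle $C$ with $L_C\supseteq L_{C_1}\cup L_{C_2}$; in particular, the poset of blocking cycles under inclusion of their $L$-regions has a unique maximum element.

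For the forward direction, the membership $M_1\in\Tbb{1}$ is immediate from the construction: $M_1$ has a newly created marked boundary face of degree $1$, inherits the marked internal face of degree $1$ from $M$, and its non-marked internal faces (which come from the $L_C$ side of $M$) have degree $3$; non-marked boundaries of $M$ that happen to lie in $L_C$ remain as non-marked boundaries of $M_1$. For $M_2\in\Tvvr{a}$, I must verify that $M_2$ is reduced. Suppose towards contradiction that $C'$ is a blocking cycle of $M_2$ with $L_{C'}$ strictly containing the newly inserted marked internal face of $M_2$. Then $C'$ is a loop avoiding the interior of that inserted face and is also a cycle of $M$; the $L$-side of $C'$ in $M$ corresponds to the same topological region as in $M_2$ and therefore contains $L_C$, and hence the marked internal face of $M$. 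Moreover, the vertex of $C'$ lies in the $R_C$-side of $M$, so the boundary faces incident to it are the same in $M$ as in $M_2$ and lie on the $R$-side. Thus $C'$ is a blocking cycle of $M$ with $L_{C'}\supsetneq L_C$, contradicting the maximality of $C$.

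The inverse operation is gluing: given $(N_1,N_2)\in\Tbb{1}\times\Tvvr{a}$, identify the degree-$1$ marked boundary face of $N_1$ with the degree-$1$ marked internal face of $N_2$, identifying their supporting vertices and loops. The resulting map $M$ has marked boundary face equal to that of $N_2$ (with its corner marking) and marked internal face equal to that of $N_1$; the boundaries of $M$ remain pairwise vertex-disjoint because, by the vertex-disjointness of boundaries in $N_1$, the gluing vertex is incident to no non-marked boundary of $N_1$. The critical step is to show that the glued loop $C_{\mathrm{glue}}$ is the maximal blocking cycle of $M$. It is itself blocking: the boundary faces of $M$ incident to its vertex come only from $N_2$ and thus lie on the $N_2$-side $=R_{C_{\mathrm{glue}}}$, while the marked internal face of $M$ lies on the $N_1$-side $=L_{C_{\mathrm{glue}}}$. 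For maximality, a strictly larger blocking cycle $C'$ of $M$ must be a loop supported at a vertex of the $N_2$-part (a loop strictly inside $N_1$'s contribution cannot simultaneously have the marked internal face of $M$ on its $L$-side and the marked boundary face of $M$ on its $R$-side). Such a $C'$ is then a loop of $N_2$ and, under the natural correspondence of sides, is a blocking cycle of $N_2$ strictly larger than the contour of $N_2$'s marked internal face, contradicting the reducedness of $N_2$.

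The main obstacle is this last step --- carefully translating a ``too large'' blocking cycle of $M$ into a ``too large'' blocking cycle of $N_2$. This relies on the fact that the marked internal face of $N_1$ lies topologically inside the marked internal face of $N_2$ (as viewed in $M$), so that the $L$/$R$-side conventions are consistent between $M$ and $N_2$. Once this is checked, mutual inversion of the decomposition and the gluing is immediate, since in both composite constructions the maximal blocking cycle of the resulting map coincides with the inserted/identified loop; applying one operation after the other therefore returns the original object.
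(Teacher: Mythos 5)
Your proposal is correct and follows essentially the same route as the paper: cut along the maximal blocking cycle to get $(M_1,M_2)\in\Tbb{1}\times\Tvvr{a}$, and invert by gluing the degree-$1$ marked boundary face of $N_1$ onto the degree-$1$ marked internal face of $N_2$, the key point in both directions being that the maximal blocking cycle of the (re)assembled map is exactly the cut/glued loop. The paper asserts these verifications as ``clear''/``easy to see'' (deferring the union-of-blocked-regions argument to Section~\ref{sec:bij_annular}), whereas you spell them out; the content is the same.
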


Next, we describe bijections for maps in $\Tbb{1}$ and $\Tbbr{a}$ by using the  master bijection approach, as 
illustrated in Figure~\ref{fig:bij_tri_diss_complete}(b)-(d).
For $M\in\Tbb{1}$, we call \emph{1-orientation} of $M$ a consistent $\ZZ$-biorientation of $M$ with weights in $\{-2,-1,0,1\}$ such that: 
\begin{compactitem}
\item every internal edge has weight $-1$,
\item every internal vertex has weight (and indegree) $1$,
\item every non-marked internal face (of degree $3$) has weight $-2$, and the marked internal face (of degree $1$) has weight $0$,
\item every non-marked boundary of length $i$ has weight (and indegree) $i+1$, and the marked boundary has weight (and indegree)~$0$. 
\end{compactitem}

The following result easily follows from  Proposition~\ref{prop3} (similarly as Proposition~\ref{prop:Equad} follows from Proposition~\ref{prop:2outerquad}).
\begin{prop}\label{prop:Etri}
Let $M$ be a map in $\Tbb{1}$, considered as a plane map by taking the marked boundary face as the outer face. 
Then $M$ admits a unique 1-orientation in $\wO_{-1}$. We call it the \emph{canonical biorientation} of $M$. 
\end{prop}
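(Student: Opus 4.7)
The plan is to derive Proposition~\ref{prop:Etri} from Proposition~\ref{prop3} via the inverse of the half-edge-opening operation, exactly mirroring the way Proposition~\ref{prop:Equad} was deduced from Proposition~\ref{prop:2outerquad} using the local rules of Figure~\ref{fig:blow2face}.

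First, for $M \in \Tbb{1}\setminus\{\lambda\}$, Lemma~\ref{lem:edge_blow_tri} identifies $M$ with the half-edge-opening of a uniquely determined map $D \in \Tb{1}$ carrying a marked half-edge $h$; forgetting the marking, $D$ lies in $\cDtri$. By Proposition~\ref{prop3}, $D$ admits a unique canonical 1-orientation in $\wO_{-1}$, so the task reduces to transferring this orientation to $M$ and verifying that the transfer is in bijection with 1-orientations of $M$. The exceptional case $M = \lambda$ I would handle directly: $\lambda$ has a single edge, a loop forced by consistency to carry weights $(0,1)$ with the marked boundary face on its right, and the remaining weight, minimality, and accessibility conditions are then automatic.

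Next, I would specify local transfer rules in a neighborhood of the new triangular face and its inner degree-$1$ face in $M$. The half-edge $h$ of $D$, together with its direction and weight, gets replaced in $M$ by the triangle and the loop; a short case analysis (on whether $h$ is ingoing or outgoing, and on its weight) prescribes the direction and weight of every new half-edge so that each new internal edge has weight $-1$, the new vertices have weight and indegree $1$, the triangular face has weight $-2$, and the marked internal face has weight $0$. In particular, the loop is forced to be $0$-way with weights $(-1,0)$, and this pins down the rest of the local picture uniquely.

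Finally, I would check that the resulting biorientation of $M$ lies in $\wO_{-1}$: consistency on the outer boundary is untouched by the strictly local modification; accessibility from any outer vertex transfers from $D$ since every new vertex sits on a directed path into the attachment region; and minimality holds because the only new closed walks introduced are the loop (which is $0$-way, hence not ccw) and the new triangle contour (which the local rules prevent from being entirely ccw). Uniqueness then follows by reversal: any 1-orientation of $M$ in $\wO_{-1}$ must assign weights $(-1,0)$ to the loop, and collapsing the triangle and loop yields a 1-orientation of $D$ in $\wO_{-1}$, which is unique by Proposition~\ref{prop3}. The main obstacle will be the finite case check pinning down the local transfer rules — the triangulation analogue of Figure~\ref{fig:blow2face} — so that both directions of the correspondence are simultaneously consistent.
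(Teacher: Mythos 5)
Your proposal is correct and follows essentially the same route as the paper, which disposes of Proposition~\ref{prop:Etri} by deriving it from Proposition~\ref{prop3} via the half-edge-opening, exactly as Proposition~\ref{prop:Equad} is derived from Proposition~\ref{prop:2outerquad} using the local rules of Figure~\ref{fig:blow2face}; your explicit treatment of the exceptional map $\lambda$ is a welcome extra detail the paper leaves implicit. (One small slip: the half-edge-opening creates no new vertices, so the ``new vertices have weight and indegree $1$'' check is vacuous.)
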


For $M\in\Tbb{a}$ we call \emph{1-orientation} of $M$ a consistent $\ZZ$-biorientation with weights in $\{-2,-1,0,1\}$ such that: 
\begin{compactitem}
\item every internal edge has weight $-1$, 
\item every internal vertex has weight (and indegree) $1$,
\item every internal inner face (of degree $3$) has weight $-2$, and the internal outer face (of degree $1$) has weight $0$. 
\item every non-marked boundary of length $i$ has weight (and indegree) $i+1$, while the marked boundary of length $a$, has weight (and indegree) $a-1$.
\end{compactitem}
\begin{prop}\label{prop4}
Let $M$ be a map in $\Tbb{a}$ considered as a plane map by taking the outer face to be the marked internal face. Then $M$ has a 1-orientation in $\cOw_1$ if and only if it is reduced (i.e., is in $\Tbbr{a}$). In this case, $M$ has a unique 1-orientation in $\cOw_1$, which we call the \emph{canonical biorientation} of $M$. 
\end{prop}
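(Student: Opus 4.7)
The plan is to follow the same blueprint as Proposition~\ref{prop2} (whose proof is also deferred to Section~\ref{sec:proofs}), adapted to the triangulation weight convention where every internal edge carries total weight $-1$ and admissible half-edge weights lie in $\{-2,-1,0,1\}$. The argument splits into two parts: a necessity part showing that a 1-orientation in $\cOw_1$ can exist only if $M$ is reduced, and an existence-with-uniqueness part for reduced maps. I expect the main obstacle to be the necessity part, where one must convert the informal statement ``the region enclosed by a spurious blocking loop is too constrained'' into a quantitative weight balance.

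For the necessity direction, I would suppose $M \in \Tbb{a}$ admits a 1-orientation $\omega$ in $\cOw_1$ and assume for contradiction that $M$ is not reduced. Then there is a blocking loop $C$ (with sole vertex $v$) strictly enclosing the marked internal face; call $L_C$ the enclosed region. Since the underlying biorientation is minimal (per the $\cO_1$ requirement), $C$ cannot be a ccw cycle, so $C$ is either $0$-way or $1$-way with $L_C$ on its right, which constrains the directions and weights of its two half-edges at $v$. I would then perform a weight/Euler computation on the closed subdiagram $N := L_C \cup C$. The sum of vertex weights in $N$ picks up $+1$ per interior internal vertex and a controlled contribution at $v$ (since, by the blocking property, no boundary half-edge of $v$ lies in $N$); the sum of face weights in $N$ picks up $0$ from the marked internal face, $-2$ from each other internal face, and $i+1$ from any non-marked boundary of length $i$ strictly inside $L_C$; and the total edge-weight contribution equals $-E$ since every internal edge and the loop $C$ carries weight $-1$. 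Combining these sums with Euler's relation $V - E + F = 1$ on $N$ yields an identity that can only be satisfied when $L_C$ coincides with the marked internal face, contradicting the assumption.

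For a reduced $M \in \Tbbr{a}$, uniqueness is the standard minimal-biorientation argument: the symmetric difference of two 1-orientations $\omega, \omega'$ is an integer-valued circulation with zero divergence at every vertex and zero weight around every face, hence decomposes into oriented cycle flows; minimality of $\omega$ and $\omega'$ together forbid both ccw and cw cycle flows in this difference, so $\omega = \omega'$. For existence, the plan is to produce a consistent $\ZZ$-biorientation of $M$ meeting the vertex, face and boundary weight conditions via a classical $\alpha$-orientation existence argument (whose feasibility reduces exactly to reducedness through the Euler balance used in the necessity part), then iteratively reverse any ccw cycles until minimality is achieved, and finally deduce accessibility from the outer vertex from the resulting minimality. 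An alternative construction I would consider in parallel is to glue $M$ to the unique smallest element of $\Tbb{1}$ (in reverse of the canonical decomposition of Lemma~\ref{lem:decompose-Tvv}), producing some $M^{\ast} \in \Tvv{a}$, apply Proposition~\ref{prop:Etri} to endow the $\Tbb{1}$-piece with its canonical biorientation, and then extend it across the glued cycle by local rules analogous to those of Figure~\ref{fig:blow2face}, restricting the output to $M$.
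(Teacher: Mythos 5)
There are two genuine gaps. First, in the necessity direction your Euler/weight computation on $N=L_C\cup C$ does not by itself force $L_C=\{f_e\}$. Carrying it out (with $f_e$ of weight $0$, the other internal faces of weight $-2$, internal vertices of weight $1$, internal edges of weight $-1$, and the face--edge incidence $\sum_{f\in L_C}\deg(f)=2|E|+1$), the Euler relation makes the ``bulk'' terms cancel identically, and what survives is only the statement that the ingoing half-edges at the loop-vertex $v$ coming from inside $L_C$ have total weight equal to the weight of the outgoing half-edge of $C$ seen from $L_C$; since the former is $\geq 0$ and the latter is $\leq 0$, both vanish. That is a structural constraint (no edge inside $L_C$ is directed into $v$), not a contradiction: non-reduced maps do admit $1$-orientations satisfying all the weight conditions, which is precisely why the proposition asserts existence \emph{in $\cOw_1$} rather than existence tout court (contrast the phrasing of Proposition~\ref{prop:Ddori} with that of Proposition~\ref{prop:Adaori}). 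The paper's argument (Claim~\ref{claim:necessity-reduced}) uses exactly the constraint you derive, but then invokes the accessibility requirement of $\cOw_1$: since no vertex strictly inside $L_C$ can reach $v$, accessibility from the outer vertices forces $v$ to be incident to $f_e$, and the outer-edge condition of $\cO_1$ (every edge on the contour of the outer face is $2$-way or $1$-way with an inner face on its right) then forces $C=\partial f_e$. Your sketch never uses accessibility in this direction, so the contradiction you announce does not materialize.

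Second, the existence step cannot be run through ``a classical $\alpha$-orientation existence argument'' as stated, because a $1$-orientation is a $\ZZ$-biorientation with negative outgoing weights, while the feasibility criterion (Lemma~\ref{lem:exists-alpha}) applies to $\NN$-biorientations; shifting weights to make them non-negative changes which half-edges are ingoing and hence changes the notions of minimality and accessibility. This is the technical heart of the paper's Section~\ref{sec:proofs}: one subdivides each edge, passes to the star map, produces a $2d/(2d-1)$-regular $\NN$-orientation (Proposition~\ref{lem:b-regular-orient}, itself requiring the face-filling construction), proves the minimal one is transferable (Claim~\ref{claim:transferable}), and only then transfers back to a $\ZZ$-biorientation via Lemma~\ref{lem:transfer} and the parity Lemma~\ref{lem:parity}. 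Relatedly, you locate reducedness in the wrong place: conditions (i)--(ii) of the feasibility criterion hold for \emph{every} map in $\Tbb{a}$ (the internal-girth condition is vacuous for $d=1$); reducedness is needed only to guarantee accessibility from the vertices of $f_e$ (it ensures those vertices are not blocked from $f_s$), i.e.\ condition (iii). Finally, your alternative construction is circular: gluing $M$ to the one-edge map of $\Tbb{1}$ and orienting that one-edge piece by Proposition~\ref{prop:Etri} leaves the whole of $M$ unoriented, and ``extending across the glued cycle'' is exactly the problem to be solved. Your uniqueness sketch is closest to being salvageable, but even there the cycle-flow decomposition must be set up for biorientations (e.g.\ a $0$-way internal edge can carry weights $(-1,0)$ or $(0,-1)$ with the same orientation), which the paper sidesteps by working at the level of the subdivided star map where Lemma~\ref{lem:uniqueminimal} applies.
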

\ni Again the proof is delayed to Section~\ref{sec:proofs}.

We denote by $\cUtri$ the set of mobiles corresponding to (canonically oriented) maps in $\Tbb{1}$ via the master bijection. These are the boundary $\ZZ$-mobiles with weights in $\{-2,-1,0,1\}$ satisfying the following properties (which imply that the excess is $-1$):
\begin{compactitem}
\item every internal edge has weight $-1$ (hence is either black-black of weights $(-1,0)$, or black-white of weights $(-2,1)$), 
\item every black vertex has degree $3$ and weight $-2$, except for a unique black vertex of degree $1$ and weight $0$,
\item for all $i\geq 0$, every white vertex of degree $i+1$ carries $i$ legs. 
\end{compactitem}
For $a\geq 1$, we denote by $\cVtri^{(a)}$ the set of mobiles corresponding to (canonically oriented) maps in $\Tbbr{a}$. These are the boundary $\ZZ$-mobiles with weights in $\{-2,-1,0,1\}$ satisfying the following properties (which imply that the excess is $1$):
\begin{compactitem}
\item every internal edge has weight $-1$ 
\item every black vertex has degree $3$ and weight $-2$, 
\item there is a marked white vertex of degree $a-1$ which carries $a$ legs,
\item for all $i\geq 0$, every non-marked white vertex of degree $i+1$ carries $i$ legs. 
\end{compactitem}
We also denote $\cVtriv^{(a)}$ the set of mobiles obtained from from mobiles in $\cVtri^{(a)}$ by marking one of the $a$ legs of the marked white vertex.
Propositions~\ref{prop:Equad} and~\ref{prop4} together with the master bijection (Theorem~\ref{theo:master_boundary}) and Lemma~\ref{lem:decompose-Tvv} finally give:

\begin{theo}\label{theo:1annulardiss}
The set $\Tbb{1}$ of triangulations is in bijection with the set $\cUtri$ of $\ZZ$-mobiles. 
%OB. removed
%The bijection is such that if the map $M$ corresponds to the mobile $T$, then each inner boundary of length $i$ in $M$ corresponds to a white vertex in $T$ of weight (and degree) $i+1$, and each internal vertex of $M$ corresponds to a white leaf in $T$.
For all $a\geq 1$, the set $\Tbbr{a}$ (resp. $\Tvvr{a}$) of triangulations is in bijection with the set $\cVtri^{(a)}$ (resp. $\cVtriv^{(a)}$) of $\ZZ$-mobiles.
%OB. removed
%The bijection is such that if the map $M$ corresponds to the mobile $T$, then each non-marked boundary of length $i$ in $M$ corresponds to a non-marked white vertex in $T$ of weight (and degree) $i+1$, and each internal vertex of $M$ corresponds to a non-marked white leaf in $T$.
%OB. added
Finally the set $\Tvv{a}$ of triangulations is in bijection with the set $\cUtri\times \cVtriv^{(a)}$ of pairs of $\ZZ$-mobiles. The bijection is such that if the map $M$ corresponds to the pair of $\ZZ$-mobiles $(U,V)$, then each non-marked boundary of length $i$ in $M$ corresponds to a non-marked white vertex in $U\cup V$ of weight (and degree) $i+1$, and each internal vertex of $M$ corresponds to a non-marked white leaf in $U\cup V$.
\end{theo}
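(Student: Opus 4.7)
The theorem packages three bijections, and the plan is to derive each by combining the canonical biorientations provided by Propositions~\ref{prop:Etri} and~\ref{prop4} with the master bijection $\hPhi$ of Theorem~\ref{theo:master_boundary}, and then gluing them via Lemma~\ref{lem:decompose-Tvv}.

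For the first bijection $\Tbb{1} \leftrightarrow \cUtri$, I would take $M \in \Tbb{1}$, view it as a plane map with outer face the marked boundary of degree~$1$, apply Proposition~\ref{prop:Etri} to obtain the unique canonical $1$-orientation $O \in \wO_{-1}$, and then feed $O$ into $\hPhi$. The image lies in $\cUtri$ thanks to the parameter correspondence recorded after Theorem~\ref{theo:master_boundary}: non-marked internal faces (degree~$3$, weight~$-2$) become black vertices of degree~$3$ and weight~$-2$; the marked internal face (degree~$1$, weight~$0$) becomes the distinguished black vertex of degree~$1$ and weight~$0$; each non-marked inner boundary of length~$i$ (weight and indegree $i+1$) becomes a boundary vertex with $i$ legs and degree~$i+1$; each internal vertex (weight and indegree~$1$) becomes a non-boundary white leaf. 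Conversely, any $T \in \cUtri$ has excess~$-1$ by a direct count from the leg/degree constraints, so $T \in \wB_{-1}$, and one checks that its master-bijection preimage is a consistent $\ZZ$-biorientation of a map in $\Tbb{1}$ satisfying the $1$-orientation axioms.

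The second bijection $\Tbbr{a} \leftrightarrow \cVtri^{(a)}$ will be handled in the same style but with $d = 1 > 0$. For reduced $M \in \Tbbr{a}$, viewed with outer face the marked internal face of degree~$1$, Proposition~\ref{prop4} supplies the canonical $1$-orientation in $\wO_{1}$, and applying $\hPhi$ yields a boundary mobile. Because $d > 0$, the outer vertex (the unique vertex on the marked internal face) is not erased and appears as a non-boundary white vertex in the mobile. The marked boundary face of $M$ (length~$a$, indegree~$a-1$, weight~$a-1$) produces the distinguished boundary vertex with $a$ legs and degree~$a-1$, matching the $\cVtri^{(a)}$ definition; the remaining conditions translate as in the first part. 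The rooted version $\Tvvr{a} \leftrightarrow \cVtriv^{(a)}$ follows by noting that marking a corner of the marked boundary face of $M$ corresponds bijectively to marking one of the $a$ legs at the distinguished boundary vertex.

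Finally, the third bijection results from Lemma~\ref{lem:decompose-Tvv}, which provides $\Tvv{a} \leftrightarrow \Tbb{1} \times \Tvvr{a}$; composing with the two bijections above yields $\Tvv{a} \leftrightarrow \cUtri \times \cVtriv^{(a)}$. For the parameter-tracking assertion, I would trace each non-marked boundary and each internal vertex of $M$ through the decomposition: every non-marked boundary sits entirely in one of $M_1, M_2$ (the maximal blocking cycle crosses no such boundary by the blocking condition) and so produces a boundary vertex in $U$ or $V$ of the claimed weight and degree, while each internal vertex of $M$ produces a non-marked white leaf. The main point requiring care will be the bookkeeping for the unique vertex on the maximal blocking cycle: once it is checked that this vertex---when internal in $M$---becomes the outer vertex of $M_2$ and hence a non-boundary white vertex in $V$ of degree~$1$, everything else is a direct translation of the parameter correspondence following Theorem~\ref{theo:master_boundary}.
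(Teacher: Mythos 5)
Your proposal is correct and follows exactly the route the paper takes: the paper's "proof" is precisely the one-line assertion that the theorem follows from the canonical-orientation propositions (Proposition~\ref{prop:Etri} for $\Tbb{1}$ and Proposition~\ref{prop4} for $\Tbbr{a}$), the master bijection of Theorem~\ref{theo:master_boundary}, and the decomposition Lemma~\ref{lem:decompose-Tvv}, and you have filled in the same chain of reductions with the correct parameter bookkeeping (including the correct treatment of the vertex on the maximal blocking cycle, which is erased from $M_1$ since $d=-1<0$ there but retained as a white vertex of $M_2$ since $d=1>0$).
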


Theorem~\ref{theo:1annulardiss} is illustrated in Figure~\ref{fig:bij_tri_diss_complete}.

%%%%%%%%%%%%%%%%%%%%%%%%%%%%%%%%%%%%%%%%%%

%% The master bijection then gives:

%% \begin{theo}\label{theo:1annulardiss}
%% The set $\cEtri$ is in bijection with the set $\cUtri$, such that for $M\in\cEtri$ and $T\in\cUtri$ the associated boundary $\ZZ$-mobile, 
%% each inner boundary of length $i$ in $M$ corresponds to a white vertex in $T$ of weight (and degree) $i+1$;
%% and each internal vertex of $M$ corresponds to a white leaf in $T$.

%% And for $a\geq 1$, the set $\cFtri^{(a)}$ 
%% is in bijection with the set $\cVtri^{(a)}$, such that for $M\in\cFtri^{(a)}$ and $T\in\cVtri^{(a)}$ the associated boundary $\ZZ$-mobile, 
%% each non-marked boundary of length $i$ in $M$ corresponds to a non-marked white vertex in $T$ of weight (and degree) $i+1$;
%% and each internal vertex of $M$ corresponds to a non-marked white leaf in $T$.
%% \end{theo}

%%%%%%%%%%%%%%%%%%%%%%%%%%%%%%%%%%%%%%%%%%%%%%%%%%%%%%%%%%

\section{Counting results}\label{sec:counting}
\subsection{Proof of Theorem~\ref{theo:krikun_quad} for quadrangulations with boundaries}\label{sec:proof_counting_quad}
We define a \emph{planted mobile of quadrangulated type} as a tree $P$ obtained as one of the two connected components after cutting a mobile $T\in\cTqua$ in the middle of an edge $e$; the half-edge $h$ of $e$ that belongs to $P$ 
is called the \emph{root half-edge} of $P$, and the vertex incident to $h$
is called the \emph{root-vertex} of $P$. 
The \emph{root-weight} of $P$ is the weight of $h$ in $T$. 
For $j\in\{-1,0,1\}$, let $R_j\equiv R_j(t;z_0,z_1,z_2,\ldots)$ be the generating function of planted mobiles
of quadrangulated type having root-weight $j$, where $t$ is conjugate to the number of buds, 
and $z_i$ is conjugate to the number of white vertices of degree $i+1$ (with $2i$ additional legs) 
for $i\geq 0$. We also denote $R:=t+R_0$. 
The decomposition of planted trees at the root easily implies that the series $\{R_{-1},R_0,R_1\}$ are determined by the following system
\begin{equation}
\left\{
\begin{array}{rcl}
R_{-1}&=&R^3,\\
R_0&=&3R_1R^2,\\
R_1&=&\sum_{i\geq 0}z_i\binom{3i}{i}R_{-1}\ \!\!\!^i,
\end{array}
\right.
\end{equation} 
where (for instance) the factor $\binom{3i}{i}$ in the 3rd line accounts
for the number of ways to place the $2i$ legs when the root-vertex has degree $i+1$
(the root half-edge plus $i$ children), 
and the factor $3$ in the second line accounts for choosing which of the 3 children
of the root-vertex is white.

This gives $R=t+3\sum_{i\geq 0}z_i\binom{3i}{i}R^{3i+2}$, or equivalently,
\begin{equation}\label{eq:R}
R=t\phi(R),\ \ \mathrm{with}\ \phi(y)=\Big(1-3\sum_{i\geq 0}z_i\binom{3i}{i}y^{3i+1}\Big)^{-1}.
\end{equation}

Let $\Uqua\equiv \Uqua(t;z_0,z_1,\ldots)$ 
be the generating function of mobiles in $\cUquav$
%from $\cUqua$ where one of the 2 corners at the marked black vertex is distinguished, 
with $t$ conjugate to the number of buds and $z_i$ conjugate 
to the number of white vertices of degree $i+1$ for $i\geq 0$. 
For $a\geq 1$, let $\Vqua^{(2a)}\equiv\Vqua^{(2a)}(t;z_0,z_1,\ldots)$ be the generating function of mobiles in $\cVquav^{(2a)}$
%from $\cVqua^{(2a)}$ where one of the $2a$ legs at the marked white vertex is distinguished, 
with $t$ conjugate to the number of buds and $z_i$ conjugate
to the number of non-marked white vertices of degree $i+1$ for $i\geq 0$. The decomposition at the marked vertex gives 
$$
\Uqua=R^2, \textrm{ and } \Vqua^{(2a)}=\binom{3a-2}{a-1}R_{-1}\ \!\!^{a-1}=\binom{3a-2}{a-1}R^{3a-3}.
$$

%% Let $\vAqua{2a}\equiv \vAqua{2a}(z_0,z_1,\ldots)$ and $\vBqua{2a}\equiv \vBqua{2a}(z_0,z_1,\ldots)$ be the respective generating functions of $\Qvv{2a}$ and $\Qvvr{2a}$, where $z_0$ is conjugate to the number of internal vertices and for $i\geq 1$, $z_{i}$ is conjugate to the number of unmarked boundaries of length $2i$. 
%% Theorem~\ref{theo:bipartite_2annulardiss} implies that
%% $$
%% \vAqua{2}=\Uqua|_{t=1},\ \ \ \ \vBqua{2a}=\Vqua^{(2a)}|_{t=1},
%% $$
%% and Equation~\eqref{eq:annular_quad_diss} implies that $\vAqua{2a}=\vAqua{2}\cdot\vBqua{2a}$, so that we obtain
%% \begin{equation*}
%% \vAqua{2a}=\binom{3a-2}{a-1}R^{3a-1}|_{t=1}.
%% \end{equation*}

Let $\vAqua{2a}\equiv \vAqua{2a}(z_0,z_1,\ldots)$ be the generating function of $\Qvv{2a}$, where $z_0$ is conjugate to the number of internal vertices and for all $i\geq 1$, $z_{i}$ is conjugate to the number of unmarked boundaries of length $2i$. 
 Theorem~\ref{theo:bipartite_2annulardiss} gives
\begin{equation*}
\vAqua{2a}=\Uqua\times\Vqua^{(2a)}|_{t=1}= \binom{3a-2}{a-1}R^{3a-1}|_{t=1}.
\end{equation*}

%% Let $\vGqua^{(2a)}\equiv \vGqua^{(2a)}(z_0,z_1,\ldots)$, $\vEqua\equiv \vEqua(z_0,z_1,\ldots)$, $\vFqua^{(2a)}\equiv \vFqua^{(2a)}(z_0,z_1,\ldots)$ be the respective generating functions of $\vcGqua^{(2a)}$, $\vcEqua$, and $\vcFqua^{(2a)}$, where $z_0$ is conjugate to the number of internal vertices and for $i\geq 1$, $z_{i}$ is conjugate to the number of boundaries of length $i$ that delimit an inner boundary face that is unmarked (i.e., the outer boundary face is discarded for $\vcEqua$ and $\vcGqua^{(2a)}$, and the marked inner boundary face of degree $2a$ is discarded for $\vcFqua^{(2a)}$). 
%% Theorem~\ref{theo:1annulardiss} implies that
%% $$
%% \vEqua=2\Uqua|_{t=1},\ \ \ \ \vFqua^{(2a)}=2\Vqua^{(2a)}|_{t=1},
%% $$
%% and Equation~\ref{eq:annular_quad_diss} implies that $2\vGqua^{(2a)}=\vEqua\cdot\vFqua^{(2a)}$, so that we obtain
%% \begin{equation}
%% \vGqua^{(2a)}=2\binom{3a-2}{a-1}R^{3a-1}|_{t=1}.
%% \end{equation}

Now let $\beta_a(m;n_1,\ldots,n_h)$ be the number of maps in $\Qu{2a}$ with a marked corner in the marked boundary face, with $m$ internal vertices, $n_i$ non-marked boundaries of length $2i$ for $1\leq i\leq h$, and no inner 
boundary of length larger than $2h$. The half total boundary length is $b=a+\sum_i in_i$, the total number of boundaries is $r=1+\sum_in_i$. Moreover, by the Euler relation, the number of edges is $e=3b+2r+2m-4=3b+2k$, where $k:=r+m-2$. 
Then Lemma~\ref{lem:edge_blow} yields 
$$
e\ \!\beta_a(m;n_1,\ldots,n_h)=[z_0^m z_1^{n_1}\cdots z_h^{n_h}]\vAqua{2a}=\binom{3a-2}{a-1}[z_0^m z_1^{n_1}\cdots z_h^{n_h}]R^{3a-1}|_{t=1}.
$$
 
%% Since $R$ is determined by $R=t\phi(R)$, with 
%% $\phi(y)=(1-\sum_{i\geq 0}z_i\binom{3i}{i}y^{3i+1})^{-1}$, the Lagrange inversion formula~\cite[Thm 5.4.2]{Stan1999} gives for any positive integers $n,q$, 
%% $$
%% [t^n]R^q=\frac{q}{n}[y^{n-q}]\phi(y)^n.
%% $$

It is easy to see from \eqref{eq:R} that the variable $t$ is redundant in $R$, and that for all $q,n_0,\ldots n_h$, 
$$\ds [z_0^{n_0} z_1^{n_1}\cdots z_h^{n_h}]R^q|_{t=1}=[z_0^{n_0} z_1^{n_1}\cdots z_h^{n_h}][t^{q+\sum_{i=0}^h(3i+1)n_i}]R^q.$$
Moreover, by the Lagrange inversion formula~\cite[Thm 5.4.2]{Stan1999}, \eqref{eq:R} implies that for any positive integers $n,q$, 
$$
[t^n]R^q=\frac{q}{n}[y^{n-q}]\phi(y)^n.
$$
Thus, denoting $p:=3a-1+m+\sum_{i=1}^h(3i+1)n_i=m+r+3b-2=k+3b$, we get
%% \begin{eqnarray*}
%% [z_0^mz_1^{n_1}\cdots z_h^{n_h}]R^{3a-1}|_{t=1}&=&[t^{p}z_0^{m}z_1^{n_1}\ldots z_h^{n_h}]R^{3a-1}\\
%% &=&\frac{3a-1}{p}[z_0^m\cdots z_h^{n_h}][y^{p-3a+1}]\Big(1-3\sum_{i=0}^h z_i\binom{3i}{i}y^{3i+1}\Big)^{-p}\\
%% &=&\frac{3a-1}{p}[z_0^m\cdots z_h^{n_h}]\Big(1-3\sum_{i=0}^h z_i\binom{3i}{i}\Big)^{-p}\\
%% &=&\frac{3a-1}{p}3^{m+r-1}\binom{p-1+m+r-1}{p-1,m,n_1,\ldots,n_h}\prod_{i=1}^h\binom{3i}{i}^{n_i}\\
%% \end{eqnarray*}
%% Note that $\widetilde{R}:=R/t$ satisfies 
%% $\widetilde{R}=1/(1-\sum_{i\geq 0}z_it^{3i+1}\binom{3i}{i}\widetilde{R}^{3i+1})$,
%% hence for any $k\geq 1$,
%% \begin{eqnarray*} 
%% [z_0^m z_1^{n_1}\cdots z_h^{n_h}]R^k|_{t=1}&=&[z_0^m z_1^{n_1}\cdots z_h^{n_h}]\widetilde{R}^k|_{t=1}\\
%% &=&[t^{m+\sum_{i=1}^h(3i+1)n_i}z_0^m z_1^{n_1}\cdots z_h^{n_h}]\widetilde{R}^k\\
%% &=&[t^{k+m+\sum_{i=1}^h(3i+1)n_i}z_0^m z_1^{n_1}\cdots z_h^{n_h}]R^k.
%% \end{eqnarray*} 
%% Hence, denoting $p:=3a-1+m+\sum_{i=1}^h(3i+1)n_i=m+r+3b-2=k+3b$, we have (using the Lagrange inversion formula from the 1st to the 2nd line):
\begin{eqnarray*}
[z_0^{m}z_1^{n_1}\cdots z_h^{n_h}]R^{3a-1}|_{t=1}&=&[z_0^{m}z_1^{n_1}\ldots z_h^{n_h}][t^p]R^{3a-1}\\
&=&\frac{3a-1}{p}[z_0^m\cdots z_h^{n_h}][y^{p-3a+1}]\Big(1-3\sum_{i=0}^h z_i\binom{3i}{i}y^{3i+1}\Big)^{-p}\\
&=&\frac{3a-1}{p}[z_0^m\cdots z_h^{n_h}]\Big(1-3\sum_{i=0}^h z_i\binom{3i}{i}\Big)^{-p}\\
&=&\frac{3a-1}{p}3^{m+r-1}\binom{p-1+m+r-1}{p-1,m,n_1,\ldots,n_h}\prod_{i=1}^h\binom{3i}{i}^{n_i}.\\
\end{eqnarray*}
Using $k=m+r-2$, $p=k+3b$, $e=p+k$, and $(3a-1)\binom{3a-2}{a-1}=\tfrac{2}{3}a\binom{3a}{a}$, we get
\begin{equation}\label{eq:beta_a_quad}
\beta_a(m;n_1,n_2,\ldots,n_h)=3^{k}\frac{(e-1)!}{m!(k+3b)!}2a\binom{3a}{a}\prod_{i=1}^h\frac{1}{n_i!}\binom{3i}{i}^{n_i},
\end{equation}
which, multiplied by $\prod_{i=1}^h n_i!(2i)^{n_i}$ (to account for numbering the 
inner boundary faces and marking a corner in each of these faces), gives~\eqref{eq:krikun_quad}.

\subsection{Proof of Theorem~\ref{theo:krikun_triang} for triangulations with boundaries}\label{sec:proof_counting_tri}
We proceed similarly as in Section~\ref{sec:proof_counting_quad}. 
We call \emph{planted mobile of triangulated type} any tree $P$ equal to one of the two connected components obtained from some $T\in\cTtri$ by cutting an edge $e$ in its middle; the half-edge $h$ of $e$ belonging to $P$ is called the \emph{root half-edge} of $P$, and the weight of $h$ in $T$
is called the \emph{root-weight} of $P$. 
For $j\in\{-2,-1,0,1\}$, let  
$S_{j}\equiv S_j(t;z_0,z_1,\ldots)$ be the generating function of planted mobiles of triangulated type
and root-weight $j$, 
with $t$ conjugate to the number of buds and $z_i$ conjugate to 
the number of white vertices of degree $i+1$ for $i\geq 0$. 
We also define $S:=t+S_{-1}$. The decomposition of planted trees at the root easily implies that the series $\{S_{-2},S_{-1},S_0,S_1\}$ are determined by the following system:
\begin{equation}
\left\{
\begin{array}{rcl}
S_{-2}&=&S^2,\\
S_{-1}&=&2SS_{0},\\
S_0&=&2SS_1+S_0^2,\\
S_1&=&\sum_{i\geq 0}z_i\binom{2i}{i}S_{-2}\ \!\!\!^i.
\end{array}
\right.
\end{equation} 
The second line gives $\ds S_0=\frac{1}{2}(1-\frac{t}{S})$. Hence the third line gives  $S^2=t^2+8S_1S^3$. Moreover the first and fourth line gives $\ds S_1=\sum_{i\geq 0}z_i\binom{2i}{i}S^{2i}$. Thus,
%% The second line (and $S=t+S_{-1}$) gives $S=t/(1-2S_0)$, so that the $3$rd line gives $S_0(1-S_0)(1-2S_0)=2tS_{1}$. 
%% If we now define $A=1-2S_0$, we have $S_0=(1-A)/2$ and $1-S_0=(1+A)/2$, so that $A(1-A^2)=8tS_1$. Hence $A=8tS_1+A^3$, 
%% hence $X:=1/A$ satisfies $X^2=1+8tS_1X^3$. Since $S=tX$, $S$ satisfies $S^2=t^2+8S_1S^3$, hence $S=t(1-8S_1S)^{-1/2}$, i.e., $S$ satisfies the equation:
\begin{equation}\label{eq:S}
S=t\phi(S),\ \ \mathrm{where}\ \phi(y)=\Big(1-8\sum_{i\geq 0}z_i\binom{2i}{i}y^{2i+1}\Big)^{-1/2}. 
\end{equation}

Let $\Utri\equiv \Utri(t;z_0,z_1,\ldots)$ 
be the generating function of mobiles from $\cUtri$, with $t$ conjugate to the number of buds and $z_i$ conjugate
to the number of white vertices of degree $i+1$ for $i\geq 0$. 
And for $a\geq 1$ let $\Vtri^{(a)}\equiv\Vtri^{(a)}(t;z_0,z_1,\ldots)$ be the generating function of mobiles in 
 $\cVtriv^{(a)}$, with $t$ conjugate to the number of buds and $z_i$ conjugate
to the number of non-marked white vertices of degree $i+1$ for $i\geq 0$. A decomposition at the marked vertex gives 
$$
\Utri=S, \textrm{ and } \Vtri^{(a)}=\binom{2a-2}{a-1}S_{-2}\ \!\!^{a-1}=\binom{2a-2}{a-1}S^{2a-2}.
$$

Let $\vAtri{a}\equiv \vAtri{a}(z_0,z_1,\ldots)$ be the generating function of $\Tvv{a}$, where $z_0$ is conjugate to the number of internal vertices and for all $i\geq 1$, $z_{i}$ is conjugate to the number unmarked boundaries of length $i$.
Theorem~\ref{theo:1annulardiss} gives 
\begin{equation}
\vAtri{a}=\Utri\times\vBtri{a}= \binom{2a-2}{a-1}S^{2a-1}|_{t=1}.
\end{equation}

%% Let $\vAtri{a}\equiv \vAtri{a}(z_0,z_1,\ldots)$ and $\vBtri{a}\equiv \vBtri{a}(z_0,z_1,\ldots)$ be respectively the generating functions of $\Tvv{a}$, and $\Tvvr{a}$, where $z_0$ is conjugate to the number of internal vertices and for $i\geq 1$, $z_{i}$ is conjugate to the number unmarked boundaries of length $i$.
%% Theorem~\ref{theo:1annulardiss} implies that
%% $$
%% \vAtri{1}=\Utri|_{t=1},\ \ \ \ \vBtri{a}=\Vtri^{(a)}|_{t=1},
%% $$
%% and Equation~\ref{eq:annular_tri_diss} implies that $\vAtri{a}=\vAtri{1}\cdot\vBtri{a}$, so that we obtain
%% \begin{equation}
%% \vAtri{a}=\binom{2a-2}{a-1}S^{2a-1}|_{t=1}.
%% \end{equation}

We define now $\eta_a(m;n_1,n_2,\ldots,n_h)$ as the number of triangulations with a marked boundary of length $a$ having a marked corner, with $m$ internal vertices, $n_i$ non-marked boundaries of length $n_i$ for $1\leq i\leq h$, and no non-marked boundary of length larger than $h$. 
The total boundary-length is $b:=a+\sum_i in_i$, the number of boundaries is $r=1+\sum_in_i$, and (by the Euler relation)
the number of edges is  $e=2b+3r+3m-6$, which is $2b+3k$ with $k:=r+m-2$. 
Then Lemma~\ref{lem:edge_blow_tri} yields
$$
2e\ \!\eta_a(m;n_1,n_2,\ldots,n_h)=[z_0^m z_1^{n_1}\cdots z_h^{n_h}]\vAtri{a}=
[z_0^m z_1^{n_1}\cdots z_h^{n_h}]\binom{2a-2}{a-1}S^{2a-1}|_{t=1}.
$$

It is easy to see from \eqref{eq:S} that for all positive integers $q,n_0,\ldots,n_h$,
$$
[z_0^{n_0}z_1^{n_1}\cdots z_h^{n_h}]S^q|_{t=1}=[z_0^{n_0}z_1^{n_1}\cdots z_h^{n_h}][t^{q+\sum_{i=0}^h(2i+1)n_i}]S^q.
$$

Hence, by the Lagrange inversion formula, 
and using the notation
$p:=2a-1+m+\sum_{i=1}^h(2i+1)n_i=2b+k$ and $s=m+\sum_{i\geq 1}n_i=k+1$ gives
\begin{eqnarray*}
[z_0^{m}z_1^{n_1}\ldots z_h^{n_h}]S^{2a-1}|_{t=1}&=&
[t^{p}z_0^{m}z_1^{n_1}\ldots z_h^{n_h}]S^{2a-1}\\
&=&\frac{2a-1}{p}[y^{p-2a+1}z_0^{m}z_1^{n_1}\ldots z_h^{n_h}]\Big(1-8\sum_{i=0}^h z_i\binom{2i}{i}y^{2i+1}\Big)^{-p/2}\\
&=&\frac{2a-1}{p}[z_0^{m}z_1^{n_1}\ldots z_h^{n_h}]\Big(1-8\sum_{i=0}^h z_i\binom{2i}{i}\Big)^{-p/2},\\
&=&\frac{2a-1}{p}[z_0^{m}z_1^{n_1}\ldots z_h^{n_h}]\left(8\sum_{i=0}^h z_i\binom{2i}{i}\right)^s\cdot[u^s](1-u)^{-p/2}\\
&=&\frac{2a-1}{p}8^s\binom{s}{m,n_1,\ldots,n_h}\Big(\prod_{i=1}^h\binom{2i}{i}^{n_i}\Big)\cdot \frac{(p+2s-2)!!}{(p-2)!!s!2^s}\\
&=&\frac{2a-1}{m!}4^s\Big(\prod_{i=1}^h\frac1{n_i!}\binom{2i}{i}^{n_i}\Big)\cdot \frac{(p+2s-2)!!}{p!!}.
\end{eqnarray*}
Thus, using $p+2s-2=e$, $s=k+1$, and $p=2b+k$ we get 
%% so that we obtain, using $e=p+2s-2$, 
%% $$
%% \eta_a(m;n_1,n_2,\ldots,n_h)=\frac{(2a-1)!}{(a-1)!^2}4^{k+1}\left[\prod_{i=1}^h\frac1{n_i!}\binom{2i}{i}^{n_i}\right]
%% \cdot \frac{(e-2)!!}{2m!\ \!p!!}.
%% $$
%% Using $\frac{(2a-1)!}{(a-1)!^2}=\frac1{2}a\binom{2a}{a}$, and $p=2b+k$, this rewrites as
$$
\eta_a(m;n_1,n_2,\ldots,n_h)=4^k\frac{(e-2)!!}{m!\ \!(2b+k)!!}a\binom{2a}{a}\prod_{i=1}^h\frac1{n_i!}\binom{2i}{i}^{n_i}.
$$
Multiplying this expression 
by $\prod_{i=1}^h n_i!i^{n_i}$ (to account for numbering the 
inner boundary faces and marking a corner in each of these faces)
gives~\eqref{eq:krikun_triang}.

\subsection{Solution of the dimer model on quadrangulations and triangulations}\label{sec:dimers} 
A \emph{dimer-configuration} on a map $M$ is a subset $X$ of the non-loop edges of $M$ such that every vertex of $M$ is incident to at most one edge in $X$. The edges of $X$ are called \emph{dimers}, and the vertices not incident to a dimer are called \emph{free}. 
%It is well known~\cite{staudacher1990yang,BoDFGu03} that the dimer model can be solved on rooted 4-valent maps (and more generally $p$-valent maps). 
The \emph{partition function} of the \emph{dimer model} on a class $\cC$ of maps  is the generating function of maps in $\cC$ endowed with a dimer configuration, counted according to the number of dimers and free vertices. The partition function of the dimer model is known for rooted 4-valent maps~\cite{staudacher1990yang,BoDFGu03} (and more generally $p$-valent maps). 

%OB. I have added the definition of ``rooted map'' in Section 2.1.

%% on a rooted map\footnote{A \emph{rooted map} is a map with a marked corner $c_0$; the incident vertex $v_0$ is called the root vertex, and the half-edge (resp. edge) following $c_0$ in clockwise order around $v_0$ is called the \emph{root half-edge} (resp. \emph{root edge}).} as a pair $(M,X)$ where $M$ is a rooted map and $X$ is a subset of the edges of $M$ that are not loops and
%% such that every 
%% vertex of $M$ is incident to at most one edge from $X$. 
%% The edges of $X$ are 
%% called the \emph{dimers}, and the 
%% vertices not incident to a dimer are called the \emph{free vertices} of the configuration. It is well known~\cite{staudacher1990yang,BoDFGu03} that the dimer model can be solved
%% on rooted 4-valent maps (and more generally $p$-valent maps). 

We observe that counting (rooted) maps with dimer configurations is a special case of counting (rooted) maps with boundaries. More precisely, upon blowing each dimer into a boundary face of degree $2$, a rooted map with a dimer-configuration can 
be seen as a rooted map with boundaries, such that all boundaries have length $2$, and the rooted corner is in an internal face. 
Based on this observation we 
easily obtain from Theorem~\ref{theo:krikun_quad} that, for all $m,r\geq 0$ with $m+2r\geq 3$,
 the number $q_{m,r}$ of dimer-configurations on rooted quadrangulations with $r$ dimers and $m+2r$
vertices is
\begin{equation}\label{eq:dimer_qua}
q_{m,r}=4(m+2r-2)\frac{3^{2r+m-2}(5r+2m-5)!}{r!m!(4r+m-2)!}.
\end{equation} 
Similarly, Theorem~\ref{theo:krikun_triang} implies that, for all $m,r\geq 0$ with $m+2r\geq 3$,
 the number $t_{m,r}$ 
of dimer-configurations on rooted triangulations with $r$ dimers and $m+2r$
vertices is 
\begin{equation}\label{eq:dimer_tri}
t_{m,r}=(m+2r-2)\frac{2^{2m+3r-3}3^{r+1}(7r+3m-8)!!}{r!m!(5r+m-2)!!}.
% OB. I changed a r into a 2r in the first factor
\end{equation}

In the context of statistical physics it would be useful to have an expression for the partition function, that is, the generating function of the coefficients $q_{m,r}$ or $t_{m,r}$. It should be possible to lift
the expressions in~\eqref{eq:dimer_qua} and~\eqref{eq:dimer_tri} to generating function expressions, 
however we find it easier to obtain directly an exact expression from
the bijections of Section~\ref{sec:outer_degree_2} (for quadrangulations) and Section~\ref{sec:triang_1} (for triangulations), without a possibly technical lift from the coefficient expressions.
 Here this works by considering generating functions for the model with a slight restriction at the root edge. 

For quadrangulations, we consider the generating function $Q(x,w)$ of rooted quadrangulations endowed with a dimer-configuration, with the constraint that both extremities of the root edge are free, where $x$ is conjugate to the number of free vertices minus $2$, and $w$ is conjugate to the number of dimers. These objects are clearly in bijection (by opening the root-edge and every dimer into a boundary face of degree $2$) with the set $\cQ$ of rooted quadrangulation with boundaries all of length $2$, such that the root-corner is in a boundary face. 
So $Q(x,w)$ is the generating function of maps in $\cQ$, where $x$ is conjugate to the number of internal vertices and $w$ is conjugate to the number of inner boundaries. Note that $\cQ$ can be seen as a subset of $\cDqua$, except that we are marking a corner in the outer face. Thus, applying the bijection of Section~\ref{sec:outer_degree_2}, we can interpret $Q(x,w)$ in terms of the set $\cTqua'$ of mobiles from $\cTqua$ such that every boundary vertex has 2 legs. More precisely, upon remembering that mobiles in $\cTqua$ have excess -2,
%(upon remembering from~\cite{BeFu12} that marking a corner in the outer face amounts to marking a so-called \emph{exposed bud} in the associated mobile, and recalling that the number of exposed buds is equal to minus the excess), 
it is not hard to see that $Q(x,w)=Q_1-Q_2$, where $Q_1$ (resp. $Q_2$) is the generating function of mobiles from $\cTqua'$ with a marked bud (resp. with a marked leg or half-edge at a white vertex) with $x$ counting white leaves, and $w$ counting boundary vertices. 
From the series expressions obtained in Section~\ref{sec:proof_counting_quad} we get 
 $Q_1=R_0=R-1$ and $Q_2=x\ \!R_{-1}+6w\ \!R_{-1}\ \!\!^2$, under 
the specialization $\{t=1,z_0=x,z_1=w,z_i=0\ \forall i\geq 2\}$. 
Hence
\begin{equation}
Q(x,w)=R-1-x\ \!R^3-6w\ \!R^6,\ \ \mathrm{where}\ R=1+3xR^2+9w\ \!R^5.
\end{equation} 
Note that $\widetilde{Q}(z,w):=Q(z,z^2w)$ is the generating 
function for the same objects, with $z$ conjugate to the number of vertices minus $2$ (which by the Euler relation is also the number of faces) 
and $w$ conjugate to the number of dimers. Now, if we are interested in the \emph{phase transition} of this model, we need to determine how the asymptotic behavior of the coefficients $c_n=[z^n]\widetilde{Q}(z,w)$ (for $n\to \infty$) depends on the parameter $w$. 
According to the principles of analytic combinatorics \cite{FlaSe}, we need to study the dominant singularities of $\widetilde{Q}(z,w)$ \emph{considered as a function of $z$}. A maple  worksheet detailing the necessary calculations can be found on the webpages of the authors; we only report the results here. Let $\sigma(w)$ be the dominant
singularity of $\widetilde{Q}(z,w)$, and let $Z=\sigma(w)-z$. 
For all $w\geq 0$, the singularity type of $\widetilde{Q}(z,w)$ is 
$Z^{3/2}$ (as for maps without dimers), and no phase-transition occurs. 
However we find a singular value of $w$ at $w_0=-3/125$, 
where $\sigma(w_0)=4/45$ and the singularity of $\widetilde{Q}(z,w_0)$ is of type $Z^{4/3}$ (as a comparison, it is shown in~\cite[Sec.6.2]{BoDFGu03} that for the dimer model on rooted 4-valent maps, the critical value of the
dimer-weight is $w_0=-1/10$ and the singularity type is the same: $Z^{4/3}$). 
 
For triangulations we consider the generating function $T(x,w)$ of rooted triangulations endowed with a dimer-configuration, with the constraint that the root-vertex is free, where $x$ is conjugate to the number of free vertices minus $1$, and $w$ is conjugate to the number of dimers. These objects are in bijection (up to opening the dimers into boundaries and opening the root half-edge as in Figure~\ref{fig:open_half_edge}) with the set $\cT$ of triangulations with boundaries, with one boundary of degree $1$ taken as the outer face and 
 all the other boundaries (inner boundaries) of length $2$, and such
that there are at least two inner faces. Let $\tau$ be the unique triangulation with one boundary face of length 1 (the outer face) and one inner face. 
By the preceding, $T(x,w)+x$ is the generating function of maps in $\cT'=\cT\cup\{\tau\}$. The bijection of Section~\ref{sec:triang_1} applies to the set $\cT'$ and allows us to express $T(w,x)$ in terms of the set $\cTtri'$ of mobiles from $\cTtri$ such that every boundary vertex has 2 legs. 
More precisely, upon remembering that mobiles in $\cTtri'$ have excess -1, this bijection gives $T(x,w)+x=T_1-T_2$, where $T_1$ (resp. $T_2$) is the generating function of mobiles from $\cTtri'$ with a marked bud (resp. a marked leg or half-edge incident to a white vertex) with $x$ counting white leaves and $w$ counting boundary vertices. From the series expressions obtained in Section~\ref{sec:proof_counting_tri}, we get 
 $T_1=S_0=(S-1)/(2S)$ and $T_2=x\ \!S_{-2}+10w\ \!S_{-2}\ \!\!^3$, under 
the specialization $\{t=1,z_0=x,z_2=w,z_i=0\ \forall i\notin\{0,2\}\}$. 
Hence
\begin{equation}
T(x,w)=\frac{S-1}{2S}-x-x\ \!S^2-10\ \!w\ \!S^6,\ \ \mathrm{where}\ S^2=1+8xS^3+48\ \!w\ \!S^7.
\end{equation} 
Again we note that $\widetilde{T}(z,w):=T(z,z^2w)$ is the generating 
function for the same objects, with $z$ conjugate to the number of vertices minus $1$ (which by the Euler relation is also one plus half the number of faces) and $w$ conjugate to the number of dimers. We now discuss the phase transition. 
We use the notations $\sigma(w)$ for the dominant
singularity of $\widetilde{T}(z,w)$, and $Z=\sigma(w)-z$. We
 find that for all $w\geq 0$, the singularity of $\widetilde{T}(z,w)$ is 
of type $Z^{3/2}$, so that no phase-transition occurs. 
However, we find a singular value $w_0=-8\sqrt{105}/5145\approx -0.0159$, for which $\sigma(w_0)=5\sqrt{105}/1008\approx 0.0508$ 
and $\widetilde{T}(z,w_0)$ has singularity type $Z^{4/3}$. 

%%%%%%%%%%%%%%%%%%%%%%%%%%%%%%%%%%%%%%%%%%%%%%%%%%%%%%%%%%%%%%%%%%%%%%%%%%%%%%%

\section{Generalization to arbitrary face degrees}\label{sec:girth} 
We present here a unification and extension of the results of Section~\ref{sec:bij-quad} and Section~\ref{sec:bij-triang}. 
In view of the results established in~\cite{BeFu12,BeFu12b,BeFu13}, one could hope to find bijections for all maps  with boundaries of girth at least $d$ (for any fixed $d\geq 1$),  keeping track of the distribution of the internal face degrees and of the boundary face degrees. 
However,  when trying to achieve this goal we met two obstacles. First, the natural parameter we can control through our approach is not the girth but a related notion that we call \emph{\bgirth} (it coincides with the girth when there are at most one boundary; see definition below). Second, in \bgirth\ $d$ we obtained bijections  only in the case where the degrees of internal faces are in $\{d,d+1,d+2\}$. These constraints appear when trying to characterize maps with boundaries by canonical orientations (see Section \ref{sec:proofs}). Nonetheless, the results presented here give a bijective proof to all the known enumerative results for maps with boundaries.

Let us first define the \bgirth. Let $M$ be a map with boundaries. The \emph{contour-length} of a set $S$ of faces of $M$ is the number of edges separating a face in $S$ from a face not in $S$. Note that the \emph{girth} of $M$ (that is, the minimal length of cycles) is equal to the minimal possible contour-length of a non-empty set of faces.
A set $S$ of faces of $M$ is called \emph{internally-enclosed}, if any face sharing a vertex with a boundary face in $S$ is also in $S$. 
For a boundary face $f_s$ of $M$, we define the $f_s$-\emph{\bgirth} of $M$ as the minimal possible contour-length of a non-empty internally-enclosed set of faces not containing $f_s$. Clearly, the $f_s$-\bgirth\ is greater or equal to the girth. 
On the other hand, the $f_s$-\bgirth\ is smaller or equal to the degree of any internal face $f$ (by considering $S=\{f\}$) and to the degree of $f_s$ (by considering the set $S$ of faces distinct from $f_s$).
Moreover, if $M$ has no boundary except for $f_s$, then the $f_s$-\bgirth\ of $M$ coincides with the girth (because any set of faces not containing $f_s$ is internally-enclosed). 
%% %% OB. changed the following because ``simple'' is not true
%% For $M$ a map with boundaries having a distinguished boundary face $f_s$, an \emph{\bcycle}\ of $M$ is a simple cycle $C$ such that, if we let $R_C$ and $L_C$ be the regions bounded by $C$ containing $f_s$ and not containing $f_s$ respectively, then any boundary face incident to a vertex of $C$ is in $R_C$. 
%% The \emph{\bgirth}\ of $M$ is the length of a shortest \bcycle. 
%% Note that the contour of $f_s$ is an \bcycle, 
%% and that it is always possible to extract an \bcycle\ from
%% the contour of an internal face, so that the \bgirth\ 
%% is a lower bound on the degrees of internal faces. 
%% Moreover, if $f_s$ is the unique boundary face of $M$ then 
%% any (simple) cycle is an \bcycle, so that the \bgirth\ coincides 
%% with the girth in that case. 

\fig{width=\linewidth}{bij_d_outer_boundary}{Left: a map in $\cD_3$
endowed with its canonical orientation (all edges are 1-way
for $d=3$, the weight at the ingoing half-edge is indicated
as the multiplicity of the arrow). Right: the corresponding
mobile in $\cT_3$ (all edges are black-white for $d=3$,  
the weight at the 
white extremity is indicated as the number of stripes across the edge). 
}

\subsection{Bijections in $f_s$-\bgirth\ $d$, when $f_s$ has degree $d$} 
For $d\geq 1$, we define $\cEd$ as the set of 
maps with boundaries having a marked boundary-face $f_s$ of degree $d$, and where every internal face has 
degree in $\{d,d+1,d+2\}$. Clearly, maps in $\cEd$ have $f_s$-\bgirth\ at most $d$, and we denote by $\cDd$ the subset of maps from $\cEd$ having $f_s$-\bgirth\ $d$. For instance, $\cDtri$ is the set of maps $\cE_1=\cD_1$ with all the internal faces of degree 3. Similarly, $\cDqua$ is the set of \emph{bipartite} maps in $\cE_2$ with all the internal faces of degree 4; the bipartiteness condition is equivalent to the fact that every boundary has even length, and implies that the $f_s$-\bgirth\ is 2 (hence $\cDqua\subset \cD_2$).

For $M\in\cEd$, a \emph{$d/(d-2)$-orientation} of $M$ is defined as 
a consistent $\ZZ$-biorientation of $M$ (with weights in 
$\{-2,-1,\ldots,d\}$) such that:
\begin{compactitem}
\item
every internal edge has weight $d-2$,
\item
every internal vertex has weight $d$, 
\item
every internal face $f$ has weight $d-\deg(f)$,
\item
every boundary face $f\neq f_s$ has weight $d+\deg(f)$, while 
the boundary of $f_s$ has weight~$0$. 
\end{compactitem}
Note that the notion of $1$-orientation for maps in $\cDtri$ given in Section \ref{sec:bij-triang} coincides with the notion of $d/(d-2)$-orientation for $d=1$.
Note also that if $X$ is a $1$-orientation of a map in $\cDqua$ as defined in Section \ref{sec:bij-quad}, then multiplying the weights of every internal half-edge of $X$ by $2$ gives a  $d/(d-2)$-orientation for $d=2$.

\begin{prop}\label{prop:Ddori}
Let $M$ be a map in $\cEd$ considered as a plane map by taking the outer face to be the marked boundary face $f_s$.
The map $M$ has a $d/(d-2)$-orientation if and only if $M$ has $f_s$-\bgirth\ at least $d$ (i.e. is in $\cDd$). 
Moreover, in this case, $M$ has a unique $d/(d-2)$-orientation in $\wO_{-d}$. We call it the \emph{canonical biorientation} of $M$. 

In the case where $d$ is even, a map $M\in \cDd$ is bipartite if and only if all the internal half-edges of $M$ have even weight in its canonical biorientation. 
\end{prop}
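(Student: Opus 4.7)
The plan is to deduce the proposition from the general theory of weighted biorientations, mirroring the strategy used for Propositions \ref{prop:2outerquad} and \ref{prop3} but now at the level of half-edge weights.

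\textbf{Necessity.} Suppose $M\in\cEd$ admits a $d/(d-2)$-orientation, and let $S$ be a non-empty internally-enclosed set of faces not containing $f_s$, with contour-length $\ell$. I would compute $\sum_{f\in S}\mathrm{weight}(f)$ in two ways. The face-weight axioms give $\sum_{f\in S}\mathrm{weight}(f) = d|S| -\!\!\sum_{f\in S\text{ internal}}\!\!\deg(f) + \!\!\sum_{f\in S\text{ boundary}}\!\!\deg(f)$. On the other hand this sum equals the total weight of outgoing half-edges whose right-face lies in~$S$. Breaking the latter into contributions from edges with both sides in $S$ versus edges on the contour, and using the edge-weight axiom (each internal edge has weight $d-2$, each boundary edge has weight $1$) together with the half-edge weight bound (outgoing $\leq 0$, ingoing $\leq d-1$ in this setting since weights lie in $\{-2,\ldots,d\}$), one arrives at an inequality relating the two expressions whose simplification yields $\ell \geq d$. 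The internally-enclosed hypothesis enters precisely to guarantee that every contour edge is an internal edge of $M$ (no boundary edge lies on the contour of $S$ when any incident boundary face is forced into $S$), so that the tight per-edge bound applies uniformly.

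\textbf{Sufficiency, uniqueness, and membership in $\wO_{-d}$.} Existence of a $d/(d-2)$-orientation under the internal-girth hypothesis will be obtained by a Hall / max-flow--min-cut argument on an auxiliary transportation network whose deficient cuts correspond bijectively, via planar duality, to non-empty internally-enclosed face-sets. The necessity calculation of the previous paragraph, run in reverse, shows that the cut condition is satisfied precisely when the internal-girth is at least~$d$. Once existence is secured, uniqueness of the minimal $d/(d-2)$-orientation follows from the standard fact (à la Propp/Felsner) that the set of weighted biorientations with prescribed vertex/face weights forms a distributive lattice under ccw-cycle reversal, so the minimum element is unique. To show this minimum lies in $\wO_{-d}$, I would verify accessibility from every outer vertex by contraposition: if some inner vertex were inaccessible, the set of non-accessible vertices would be bounded by a cycle of edges that are either 2-way or 1-way with the inaccessible region on their left; this cycle is ccw and does not contain the outer face, contradicting minimality.

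\textbf{Bipartite refinement.} When $d$ is even, both $d-2$ and $d$ are even, so the parity of internal half-edge weights is well defined and propagates: around each internal face $f$, the sum of outgoing weights is $d-\deg(f)$, forcing $\deg(f)$ to have a prescribed parity if all weights are even; around each internal vertex the total ingoing weight is $d$. Hence if all internal half-edges are even-weighted, every internal face has even degree and $M$ is bipartite. Conversely, when $M$ is bipartite every internal face has even degree; the existence argument can then be run modulo $2$, showing that the ``even-parity'' sublattice of $d/(d-2)$-orientations is non-empty, and hence contains the minimal element, which is therefore even-weighted.

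The main obstacle will be Step~2: setting up the transportation network so that its Hall-type cut condition is equivalent (not merely weaker or stronger) to the internal-girth lower bound. Unlike in \cite{BeFu12}, boundary faces now have weight $d+\deg(f)$ rather than $\deg(f)-d$, which flips a sign and forces the cut condition to involve \emph{internally-enclosed} rather than arbitrary face-sets; reconciling this with the face-reversal trick used for the master bijection is where the technical work lies.
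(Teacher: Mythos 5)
Your necessity argument and your treatment of the bipartite refinement line up with the paper's (Claim~\ref{claim:necessity-girth} and Lemma~\ref{lem:parity}), but the existence/uniqueness half --- which is where essentially all of the work of Section~\ref{sec:proofs} lives --- has a genuine gap. A $d/(d-2)$-orientation is a $\ZZ$-biorientation: outgoing half-edges carry \emph{negative} weights, and the constraints include prescribed \emph{face} weights (each internal face $f$ must have weight $d-\deg(f)$, a condition coupling the orientation to the embedding). Neither feature is captured by a standard transportation-network/Hall feasibility argument; the feasibility criterion available here (Lemma~\ref{lem:exists-alpha}, imported from \cite{BeFu12}) applies only to $\NN$-biorientations with prescribed vertex and edge weights. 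The paper's route is: subdivide every edge of $M$ by three vertices to get a bipartite map $N$ of internal girth $4d$; pass to the star map $\siN$ so that the face-weight constraint becomes a vertex-weight constraint at star-vertices (Proposition~\ref{lem:b-regular-orient}, itself proved by filling faces with chordless $2b$-angulations and invoking Lemma~\ref{lem:b-orient}); take the unique almost-minimal such $\NN$-orientation; prove that it is \emph{transferable} (Claim~\ref{claim:transferable}, a delicate local case analysis that is the crux of the whole proof and genuinely needs the edge subdivision); apply the transfer Lemma~\ref{lem:transfer} to create the negative weights; and finally halve all weights using Lemma~\ref{lem:parity}. None of these steps --- in particular the transferability claim --- has a counterpart in your proposal, and ``setting up the transportation network'' is not a matter of reconciling a sign in the boundary-face weights but of inventing this entire reduction (or an equivalent one).

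A second, smaller gap: your claim that accessibility of the minimal orientation follows by contraposition from minimality is not correct. By Lemma~\ref{lem:uniqueminimal}, accessibility from a given vertex is invariant under ccw-cycle reversal, so it is a property of the weight data $(\al,\be)$ rather than of the minimal representative, and it can genuinely fail --- that is exactly what the ``$2b$-blocked'' vertices of Lemma~\ref{lem:b-orient} are. The frontier of the inaccessible region is a face contour of the submap induced by the accessible vertices, not a directed ccw cycle, so no contradiction with minimality arises. In the paper, accessibility is obtained from the counting criterion of Lemma~\ref{lem:exists-alpha}(iii) under the girth hypothesis, and then propagated through the transfer; your sketch would need an analogous quantitative argument.
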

The proof is delayed to Section~\ref{sec:proofs}. 
Note that Proposition~\ref{prop:Ddori} includes Proposition~\ref{prop3} (case $d=1$, with all internal faces of degree $3$), and also Proposition~\ref{prop:2outerquad} (case $d=2$, with all internal faces of degree $4$) upon dividing all weights on internal half-edges by $2$.

We denote by $\cTd$ the set of mobiles corresponding to (canonically oriented) maps in $\cEd$ via the master bijection. These are the boundary $\ZZ$-mobiles satisfying the following properties (which readily imply that the weight of every half-edge is in $\{-2,-1,\ldots,d\}$ and the excess is $-d$):
\begin{compactitem}
%\item every half-edge weight is in $\{-2,-1,\ldots,d\}$,
\item every edge has weight $d-2$, 
\item every black vertex has degree $\delta$ in $\{d,d+1,d+2\}$ and weight $-\delta+d$,
\item every white vertex has weight $d+\ell$, where $\ell$ is the number of incident legs.
\end{compactitem}

Proposition~\ref{prop:Ddori} together with the master bijection (Theorem~\ref{theo:master_boundary}) then yield the following result.

% OB. Definition

\begin{theo}\label{theo:bij_Dd}
For each $d\geq 1$, the set $\cDd$ of maps is in bijection with the set $\cTd$ of $\ZZ$-mobiles via the master bijection.
If a map $M\in\cDd$ corresponds to a mobile $T\in\cTd$ by this bijection, then every 
internal face of $M$ corresponds to a black vertex of $T$ of the same degree, every internal vertex of $M$ corresponds to a white vertex of $T$ with no leg, and every boundary face $f\neq f_s$ in $M$ corresponds 
to a white vertex of $T$ having $\mathrm{deg}(f)$ legs. 
\end{theo}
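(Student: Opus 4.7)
The plan is to combine Proposition~\ref{prop:Ddori} with the master bijection of Theorem~\ref{theo:master_boundary}. By Proposition~\ref{prop:Ddori}, assigning to each $M\in\cDd$ its canonical $d/(d-2)$-orientation (viewing $f_s$ as the outer face) yields a bijection between $\cDd$ and its image inside $\wO_{-d}$. Composing with $\hPhi:\wO_{-d}\to\wB_{-d}$ thus gives an injection $\cDd\hookrightarrow\wB_{-d}$, and the content of the theorem is to identify the image as exactly $\cTd$ while reading off the asserted parameter correspondences.

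For the forward direction, I take $M\in\cDd$ with canonical biorientation $O$ and set $T=\hPhi(O)$. The local rule of Figure~\ref{fig:local_rule_edge_biori} conserves weight across each edge, so every internal edge of $O$ of weight $d-2$ produces a mobile edge of weight $d-2$. Using the parameter-tracking listed after Theorem~\ref{theo:master_boundary}, every internal face of degree $\delta\in\{d,d+1,d+2\}$ and weight $d-\delta$ becomes a black vertex of the same degree and weight; every internal vertex of weight $d$ becomes a non-boundary white vertex of weight $d$ (with no legs); and every inner boundary of length $\ell$ and weight $d+\ell$ becomes a boundary vertex with $\ell$ legs and weight $d+\ell$. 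Expressing the white-vertex condition uniformly as ``weight $d+\ell$ with $\ell$ legs'' recovers the three defining conditions of $\cTd$, and the parameter correspondences stated in the theorem are established simultaneously.

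For the reverse direction, I first need to verify that any $T\in\cTd$ has excess $-d$, so that $T\in\wB_{-d}$ and $\hPhi^{-1}(T)$ is defined. This follows from a short counting argument: summing half-edge weights in two ways gives
\[
(d-2)e \;=\; \sum_W w(W)+\sum_B w(B) \;=\; d(V_W+V_B) + L - \sum_B \delta_B,
\]
where $e$ counts edges, $V_W,V_B$ count white/black vertices, $L$ counts legs, and $\delta_B$ is the degree including buds. Combining with the handshake identity $\sum_B\delta_B+\sum_W\deg(W)=2e+b$ (where $b$ is the number of buds and $\deg(W)$ excludes legs) and the tree relation $V_W+V_B=e+1$ yields $\sum_W\deg(W)+L-b=-d$, which is the excess. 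Reading the parameter translations in reverse then shows that $\hPhi^{-1}(T)$ is a consistent $\ZZ$-biorientation of some plane map $M\in\cEd$ satisfying all conditions of a $d/(d-2)$-orientation; by Proposition~\ref{prop:Ddori} this forces $M\in\cDd$ with $\hPhi^{-1}(T)$ as its canonical biorientation.

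The main technical point is the excess computation above, which justifies omitting an explicit excess condition from the definition of $\cTd$. Beyond this, the proof amounts to a dictionary translation between the biorientation side and the mobile side using the parameter-tracking that is already built into the master bijection; the genuinely hard work (existence and uniqueness of the canonical orientation characterizing $\cDd$) is isolated in Proposition~\ref{prop:Ddori} and handled in Section~\ref{sec:proofs}.
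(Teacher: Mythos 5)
Your proposal is correct and follows essentially the same route as the paper: the theorem is obtained by composing the canonical $d/(d-2)$-orientation of Proposition~\ref{prop:Ddori} with the master bijection of Theorem~\ref{theo:master_boundary} and translating parameters, with the excess computation (which the paper asserts is ``readily implied'' by the defining conditions of $\cTd$) checked explicitly. Your excess calculation is accurate and the appeal to uniqueness in Proposition~\ref{prop:Ddori} for the reverse direction is exactly the intended argument.
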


\subsection{Bijections in \bgirth\ $d$ when at least one internal face has degree $d$}\label{sec:bij_annular} 
As a unification and generalization of Sections~\ref{sec:general_case_quad_diss} and~\ref{sec:general_case_tri_diss}, 
we treat here the case where the marked boundary face $f_s$ has arbitrary degree, but at least one internal face
has degree equal to the $f_s$-\bgirth. 

For $d,a\geq 1$, we denote by $\cGda$ the set of maps with boundaries, with a marked boundary face $f_s$ of degree $a$, and a marked internal face $f_e$ of degree $d$, and where all internal faces have degree in $\{d,d+1,d+2\}$.
Clearly, maps in $\cGda$ have $f_s$-\bgirth\ at most $d$, and we denote by $\cAda$ the subset of maps from $\cGda$ having $f_s$-\bgirth\ $d$. 
For instance, $\Tbb{a}$ is the set of maps $\cG_1^{(a)}=\cA_1^{(a)}$ with all the internal faces of degree 3. 
Similarly, $\Qbb{2a}$ is the set of \emph{bipartite} maps in  $\cG_2^{(2a)}$ with all the internal faces of degree 4; the bipartiteness condition is equivalent to the fact that every boundary has even length, and implies that the $f_s$-\bgirth\ is 2 (hence $\Qbb{2a}\subset \cA_2^{(2a)}$).
Note that $\cAda$ is empty if $a<d$, and that $\cAdd$ identifies with the set of maps from $\cDd$ with a marked internal face of degree $d$.

For $M\in\cAda$ we define a \emph{blocked region} of $M$ as an internally-enclosed set of faces $S$ such that $f_s\notin S$ and $f_e\in S$.
Note that any blocked region has contour-length at least $d$, and that $S=\{f_e\}$ is a blocked region of contour-length $d$.
We also claim that if $S_1$ and $S_2$ are blocked regions of contour length $d$, then $S_1\cup S_2$ is a blocked region of contour length $d$. Indeed, for any blocked regions $S_1,S_2$ the sets $S_1\cup S_2$ and $S_1\cap S_2$ are clearly blocked regions. Moreover, denoting $\ell_1$, $\ell_2$, $\ell_{1\cup 2}$, and $\ell_{1\cap 2}$ the contour lengths of $S_1$, $S_2$, $S_1\cup S_2$ and $S_1\cap S_2$ respectively, we have 
$$\ell_{1}+ \ell_{2}=\ell_{1\cup 2}+ \ell_{1\cap 2}+2\Delta,$$
where $\Delta$ is the number of edges of $M$ incident to a face in $S_1\setminus S_2$ and a face in $S_2\setminus S_1$. 
Therefore if $\ell_1=\ell_2=d$, then
$$\ell_{1\cup 2}\leq \ell_{1}+ \ell_{2}-\ell_{1\cap 2}\leq d.$$
Thus there is a blocked region $S$ of contour length $d$ containing all the other blocked regions of contour length $d$. We call $S$ the \emph{maximal} blocked region of $M$.

A map $M\in\cAda$ is called \emph{reduced} if $S=\{f_e\}$ is the unique blocked region of contour-length $d$. We denote by $\cBda$ the subset of reduced maps from $\cAda$. We also denote by $\vcAda$ (resp. $\vcBda$) the set of maps from $\cAda$ (resp. from $\cBda$) with a marked corner of $f_s$. Note that $\cBda$ is empty for $a<d$ and that $\cBdd$ consists of exactly one map, with two faces of degree $d$, one internal and the other external. 

%% For $M\in\cAda$ we define a \emph{blocking cycle} of $M$ as an \bcycle\ $C$ of length $d$ that separates the two marked
%% faces (i.e., has $f_s$ on one side and $f_e$ on the other side);
%% we denote by $L_C$ the submap formed by $C$ and 
%% all faces (and incident edges and vertices) on the side of $C$
%% that includes $f_e$. 
%% Note that the contour of $f_e$ is always a blocking cycle. 
%% The map $M$ is called \emph{reduced} 
%% if the contour of $f_e$ is the unique
%% blocking cycle; the subset of reduced maps from $\cAda$
%% is denoted by $\cBda$. Note that $\cBda$ is empty for $a<d$ 
%% and that $\cBdd$ consists of exactly one map, 
%% with two faces of degree $d$, one internal and the
%% other external. 
%% We also denote by $\vcAda$ (resp. $\vcBda$)
%% the set of maps from $\cAda$ (resp. from $\cBda$) with a 
%% marked corner in $f_s$. 
%% For $M\in\cAda$ and two blocking cycles $C,C'$ we say that $C$
%% is \emph{larger} than $C'$ if $C'$ is in $L_C$. It is easy to see 
%% that $M$ has a unique blocking cycle that is larger than all
%% the other blocking cycles of $M$; 
%% it is called the \emph{canonical cycle} of $M$. 
 
As in Sections~\ref{sec:general_case_quad_diss} and~\ref{sec:general_case_tri_diss}, we will consider a decomposition of maps in $\cAda$ into two parts, which, roughly speaking, are obtained by cutting the map along the contour of the maximal blocked region. 
Let $M$ be a map in $\cAda$.
Let $S$ be the maximal blocked region of $M$. 
Let $\Eo(S)$ be the set of edges of $M$  having both of their incident faces in $S$, and let $\Vo(S)$ be the set of vertices having all of their incident faces in $S$.   
%Let also $E_b(S)$ be the set of edges  of $M$ with a face of $S$ on one side and a face not in $S$ on the other side, and let  $V_b(S)$  be the set of vertices incident to both a face in $S$ and a face not in $S$.  
It is easy to see that the region $H(S):=S\cup \Eo(S)\cup \Vo(S)$ is \emph{simply connected} (homeomorphic to a disk). We denote by $C(S)$ the cycle of $M$ corresponding to the boundary of the region $H(S)$. Hence the cycle $C(S)$, which is not necessarily simple, is made of the edges incident to both a face in $S$ and a face not in $S$. 
We denote by $M_1$ the map obtained from $M$ by keeping only the vertices in $\Vo(S)$, the edges in $\Eo(S)$, and the cycle $C(S)$ turned into a simple cycle $C'(S)$ (so a single vertex on $C(S)$ may correspond to several vertices of $C'(S)$). This process is illustrated in Figure~\ref{fig:decompose-maps}.
We denote by $f_s'$ the marked boundary face (of degree $d$) of $M_1$ which lies outside of the cycle $C'(S)$.      It is clear that $M_1$ is in $\cAdd$.  We denote by $M_2$ the map obtained from $M$ by erasing all the vertices in $\Vo(S)$ and all the edges in $\Eo(S)$, 
so that the simply connected region $H(S)$ is replaced by a single marked internal face (of degree $d$) 
which we denote by $f_e'$. Note that the contour $C(S)$ of the marked face $f_e'$ is not necessarily simple;   see Figure~\ref{fig:decompose-maps}.
It is clear that $M_2$ is in $\cBda$. 
Hence every map $M$ in $\cAda$ decomposes into a pair $(M_1,M_2)$ of maps in $\cAdd\times\cBda$. 

\fig{width=.8\linewidth}{decompose-maps}{The canonical decomposition of maps in $\cAda$ into a pair of maps in $\cAdd\times\cBda$.}

Conversely, given a pair $(M_1,M_2)$ in $\cAdd\times\cBda$, we can glue the contour of the marked boundary face $f_s'$ of $M_1$ to the contour of the marked internal face of $M_2$. Such a gluing produces a map in $\cAda$ such that the maximal blocked region of $M$ consists of all the faces of $M_1$ distinct from its marked boundary face $f_s'$. There are $d$ ways of gluing the two maps $M_1,M_2$ together, but one can easily make the gluing and ungluing canonical at the level of maps with marked corners (formally, one needs to fix an arbitrary convention for each map $M_2\in \vcBda$ by choosing one of the corners of the marked internal face as the ``gluing point'' for the marked corner of the maps in $\cAdd$). We call this the \emph{canonical decomposition} of the maps in $\cAda$. We summarize the above discussion:

\begin{lem}\label{lem:decompose-d}
The canonical decomposition is a bijection between $\vcAda$ and $\vcAdd\times\vcBda$.
\end{lem}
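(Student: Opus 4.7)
The plan is to verify in turn: (i) the decomposition map $M \mapsto (M_1, M_2)$ sends $\vcAda$ into $\vcAdd \times \vcBda$; (ii) a gluing operation, made canonical by a fixed corner convention, provides an inverse $\vcAdd \times \vcBda \to \vcAda$; and (iii) these two maps are mutually inverse. Much of the structural content has already been laid out in the paragraphs preceding the lemma, so the remaining work is a careful check of these assertions.

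For (i), the easy half is that $M_1 \in \cAdd$: the cut contour has length $d$, so $f_s'$ has degree $d$; the face $f_e \in S$ survives in $M_1$ as its marked internal face of degree $d$; the internal face-degree condition is inherited from $M$; and the $f_s'$-internal girth of $M_1$ is at least $d$, since any internally-enclosed set of faces of $M_1$ avoiding $f_s'$ is a subset of $S$, hence corresponds to an internally-enclosed set of faces of $M$ avoiding $f_s$ with the same contour length, which is $\geq d$ by hypothesis on $M$. The more delicate half is showing that $M_2$ is reduced, i.e.\ $M_2 \in \cBda$. Here I would use the maximality of $S$ in an essential way: any hypothetical blocked region $S' \supsetneq \{f_e'\}$ of $M_2$ with contour length $d$ would lift, by substituting the faces of $S$ back for $f_e'$, to a blocked region of $M$ of contour length $d$ strictly containing $S$, contradicting maximality. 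I expect this reducedness check to be the main (though still modest) obstacle.

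For (ii) and (iii), I would fix once and for all, for each $M_2 \in \vcBda$, a \emph{gluing corner} $g(M_2)$ among the $d$ corners incident to $f_e'$, chosen by any deterministic rule. Given $(M_1', M_2') \in \vcAdd \times \vcBda$, glue the contours of $f_s'$ (in $M_1'$) and $f_e'$ (in $M_2'$) so that the marked corner on $f_s'$ of $M_1'$ lines up with $g(M_2')$, and take the marked corner of the glued map to be the one inherited from $f_s$ of $M_2'$. That this produces an element of $\vcAda$ whose maximal blocked region is exactly the $M_1'$-side again reduces to $M_2'$ being reduced: any strictly larger blocked region of contour length $d$ would descend to a non-trivial blocked region of contour length $d$ in $M_2'$, a contradiction. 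Mutual inversion then follows because the $d$-fold rotational ambiguity of the gluing is absorbed precisely by the marked corner on $f_s'$, so the alignment is forced in both directions.
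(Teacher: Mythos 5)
Your proposal follows essentially the same route as the paper, which gives no formal proof but summarizes the preceding discussion: cut along the maximal blocked region $S$, observe that $M_1\in\cAdd$ and $M_2\in\cBda$, and make the $d$-fold gluing ambiguity canonical via a fixed corner convention on the marked internal face of $M_2$. Your explicit verification that $M_2$ is reduced (lifting a hypothetical larger blocked region of contour length $d$ back to $M$ to contradict maximality of $S$) correctly fills in the step the paper dismisses as clear, and the rest matches the paper's argument.
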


%% OB. The above result was previously stated without the arrows, but I think this was uncorrect.
%% \begin{equation}\label{eq:Ada}
%% \cAda\simeq \cAdd\times \cBda. 
%% \end{equation}

As mentioned earlier, the set of maps $\cAdd$ identifies with the set of maps from $\cDd$ with a (secondary) marked internal face of degree $d$. By Theorem~\ref{theo:bij_Dd}, the set $\cDd$ of maps is in bijection with the set of $\cTd$ of mobile via the master bijection. Moreover, through the master bijection, marking an internal face of degree $d$ in the map corresponds to marking a black vertex of degree $d$ of the mobile. We denote $\cUd$ the set of mobiles from $\cTd$ with a marked black vertex of degree $d$. We also denote $\cUdv$ the set of mobiles obtained from mobiles in $\cUd$ by marking one of the corners of the marked black vertex of degree $d$. The preceding remarks can be summarized as follows:
\begin{lem}\label{lem:marked-face-d}
For all $d\geq 1$, the set $\cAdd$ (resp. $\vcAdd$) of maps is in bijection with the set $\cUd$ (resp. $\cUdv$) of $\ZZ$-mobiles via the master bijection.
\end{lem}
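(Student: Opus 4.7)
My plan is to derive the lemma directly from Theorem \ref{theo:bij_Dd} by transporting markings through the master bijection $\hPhi$.

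First, I will identify $\cAdd$ with the set of pairs $(M, f_e)$ where $M \in \cDd$ and $f_e$ is an internal face of $M$ of degree $d$. This identification holds because the defining conditions of $\cAdd$---a marked boundary face $f_s$ of degree $d$, internal faces of degrees in $\{d, d+1, d+2\}$, and $f_s$-internal girth exactly $d$---are precisely those of $\cDd$ with the added requirement of a marked internal face of degree $d$. (This identification is stated in the paragraph preceding the lemma.) Next, I will invoke Theorem \ref{theo:bij_Dd}: the master bijection $\hPhi: \cDd \to \cTd$ sends each internal face of $M$ of degree $k$ to a black vertex of $\hPhi(M)$ of degree $k$. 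Transferring the additional marking from $f_e$ to the corresponding black vertex of degree $d$ in $\hPhi(M)$ immediately yields the bijection $\cAdd \leftrightarrow \cUd$.

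For the rooted version, both $\vcAdd$ and $\cUdv$ are obtained by marking one of $d$ corners: in $\vcAdd$, one of the $d$ corners of $f_s$; in $\cUdv$, one of the $d$ corners of the marked black vertex (which corresponds under $\hPhi$ to the internal face $f_e$). Since the unrooted bijection is already established and both sets of $d$ corners have the same cardinality, the rooted bijection $\vcAdd \leftrightarrow \cUdv$ is obtained by fixing, once and for all, a canonical convention that pairs corners of $f_s$ with corners of the marked black vertex. This is of the same nature as the convention used in Lemma \ref{lem:decompose-d} to make the canonical decomposition bijective at the rooted level.

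Since the statement is essentially a direct transport of decorations across the already-established master bijection, I do not anticipate any real obstacle; the only point requiring care is the rooted version, where one must observe that the corner-marking is combinatorially a $d$-to-$1$ extension on each side and invoke a fixed convention to obtain a genuine bijection rather than merely a cardinality match.
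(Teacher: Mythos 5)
Your proposal is correct and follows essentially the same route as the paper: identify $\cAdd$ with $\cDd$ plus a marked internal face of degree $d$, invoke Theorem~\ref{theo:bij_Dd} together with the fact that internal faces correspond to black vertices of the same degree, and transport the marking. Your explicit treatment of the rooted case (observing that both corner-markings are $d$-to-$1$ extensions and fixing a convention, as in Lemma~\ref{lem:decompose-d}) is in fact slightly more careful than the paper, which passes over this point silently.
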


\fig{width=\linewidth}{bij_d_annular}{Left: a map in $\cB_3^{(4)}$
endowed with its canonical orientation (all edges are 1-way
for $d=3$, the weight at the ingoing half-edge is indicated
as the multiplicity of the arrow). Right: the corresponding
mobile in $\cV_3^{(4)}$ (the marked white vertex is represented as a square; all edges are black-white for $d=3$, the weight at the 
white extremity is indicated as the number of stripes across the edge). 
}

We will now characterize maps in $\cBda$ by canonical orientations, in order to get a bijection for these maps as well. For a map $M\in\cGda$, we define a $d/(d-2)$-orientation of $M$ as
a consistent $\ZZ$-biorientation of $M$ with weights in 
$\{-2,-1,\ldots,d\}$ such that:
\begin{compactitem}
\item
every internal edge has weight $d-2$,
\item
every internal vertex has weight $d$, 
\item
every internal face $f$ has weight $d-\deg(f)$,
\item
every boundary face $f\neq f_s$ has weight $d+\deg(f)$, while  $f_s$ has weight~$-d+\deg(f_s)$. 
\end{compactitem}

\begin{prop}\label{prop:Adaori}
Let $M$ be a map in $\cGda$, let $f_s$ be its marked boundary face, and let $f_e$ be its marked internal face. We consider $M$ as a plane map by taking $f_e$ to be the outer face.
The map $M$ admits a $d/(d-2)$-orientation in $\wO_{d}$ if and only if $M$ is in $\cBda$ (that is, $M$ has $f_s$-\bgirth\ at least $d$, and is reduced).
% OB. modified, in order to simplify the proof.
%The map $M$ admits a $d/(d-2)$-orientation if and only if $M$ has $f_s$-\bgirth\ at least $d$ (i.e. $M$ is in $\cAda$). Moreover, if $M$ is in $\cAda$, then $M$ has a $d/(d-2)$-orientation in $\wO_{d}$ if and only if $M$ is reduced (i.e. $M$ is in $\cBda$). 
In this case, the $d/(d-2)$-orientation in $\wO_{d}$ is unique. We call it the \emph{canonical biorientation} of~$M$. 

Lastly, in the case where $d$ and $a$ are both even, a map $M\in \cBda$ is bipartite if and only if all the internal half-edges of $M$ have even weight in its canonical biorientation. 
\end{prop}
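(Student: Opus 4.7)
The plan is to adapt the proof of Proposition~\ref{prop:Ddori} to the present setting, where the outer face is now the marked internal face $f_e$ (of degree $d$) rather than the marked boundary face, and where the prescribed weight of $f_s$ changes from $0$ to $-d+\deg(f_s)$. The argument splits naturally into three parts: necessity of the $\cBda$ condition, existence and uniqueness of the canonical biorientation, and the bipartiteness characterization.

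\textbf{Necessity.} Suppose $M\in\cGda$ admits a $d/(d-2)$-orientation $X$ in $\wO_d$. For any non-empty internally-enclosed set of faces $S$ of $M$ with $f_s\notin S$, I would perform the total weight balance over the subgraph enclosed by the contour of $S$: the prescribed sum of vertex weights over vertices strictly inside $S$, plus the face weights of $S$, equals the sum of weights of internal edges fully inside $S$ (each contributing $d-2$) plus the contribution from the half-edges along the contour of $S$. Since each contour half-edge is either a nonpositive-weight outgoing one or a positive-weight ingoing one, this identity forces the contour length to be at least $d$, establishing that $M$ has $f_s$-\bgirth\ at least $d$. Reducedness follows from the equality case: if $S$ is a blocked region of contour-length exactly $d$ containing $f_e$ with $S\neq\{f_e\}$, then all contour half-edges of $S$ must carry extremal weights, which produces a ccw cycle along the contour of $S$ inside $M$, contradicting the minimality built into $\wO_d$.

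\textbf{Existence, uniqueness, bipartiteness.} Given $M\in\cBda$, I would construct the canonical biorientation by the same Hall-type/flow argument used in the proof of Proposition~\ref{prop:Ddori}, with the two hypotheses ``$f_s$-\bgirth\ at least $d$'' and ``reduced'' supplying exactly the strict Hall-type inequalities over every non-trivial internally-enclosed face subset, so that a feasible biorientation exists and can be refined to be minimal and accessible from the vertices of $f_e$. Uniqueness follows by the standard argument: the difference of two $d/(d-2)$-orientations is supported on a $\ZZ$-combination of cycles, which minimality forbids in both orientations, forcing equality. For the bipartiteness claim with $d,a$ both even, if every internal half-edge of the canonical biorientation carries an even weight, one obtains a $2$-coloring of the vertices via a $\ZZ/2\ZZ$-valued potential defined by summing half-edge weights along any path (the per-edge total $d-2$ being even makes the definition path-independent); conversely, if $M$ is bipartite, then the prescribed even vertex/face weights admit an orientation with all internal half-edges of even weight, which by uniqueness must be the canonical one.

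The main obstacle I anticipate is the existence step, specifically transcribing the Hall-type inequality check in the new setting: one must verify that the combination of the $f_s$-\bgirth\ condition with reducedness is precisely what prevents a saturating (and therefore contour-cycle-producing) internally-enclosed subset other than the trivial $\{f_e\}$, thereby producing a strictly feasible biorientation that can be refined to a minimal one in $\wO_d$. Once existence is secured, uniqueness and the bipartiteness equivalence follow from the standard cycle-reversal and parity arguments sketched above.
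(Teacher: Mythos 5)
Your outline has the right skeleton (necessity of the $f_s$-\bgirth\ and of reducedness, then existence/uniqueness, then parity), and your weight-balance argument for the $f_s$-\bgirth\ condition matches the paper's Claim~\ref{claim:necessity-girth}. But two of your three steps rest on mechanisms that do not work as described. For the necessity of reducedness, the weight balance over a blocked region $S$ of contour length $d$ does not force the \emph{contour} half-edges to carry extremal weights, and the contour need not even be a simple directed cycle; what the computation actually yields (Claim~\ref{claim:necessity-reduced}) is that the half-edges of the edges interior to $S$ that touch the contour vertices all have non-positive weight, so no interior vertex of $S$ can reach the contour. The contradiction then comes from \emph{accessibility} from the vertices of $f_e$ (forcing a vertex of $f_e$ onto the contour, and then the orientation constraints on the edges incident to the outer face force $S=\{f_e\}$), not from minimality and a ccw cycle as you propose.

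The more serious gap is the existence step. A direct Hall-type feasibility argument (Lemma~\ref{lem:exists-alpha}) only produces $\NN$-biorientations, in which outgoing half-edges have weight $0$; a $d/(d-2)$-orientation requires outgoing half-edges of weight down to $-2$ (already for $d=1,2$ every internal edge needs a negative half-edge weight), so no choice of $\al/\be$ on $M$ itself encodes these objects. This is precisely why the paper's proof of Proposition~\ref{prop:Ddori}, which you invoke as a black box, is not a Hall-type argument on $M$: it subdivides each edge with three edge-vertices, passes to the star map $\siN$, builds a $2d/(2d-1)$-regular $\NN$-orientation (itself obtained by filling faces with $2b$-angulations and applying Lemma~\ref{lem:b-orient}), proves the minimal such orientation is \emph{transferable} (Claim~\ref{claim:transferable}, a delicate local argument and the real crux), applies the transfer Lemma~\ref{lem:transfer} to produce the $\ZZ$-biorientation, and finally halves the weights using the parity Lemma~\ref{lem:parity}. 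Reducedness enters here to guarantee that the vertices of $f_e'$ are not $4d$-blocked from $f_s$, hence that the minimal regular orientation is accessible from them. None of this machinery appears in your sketch. Finally, your parity argument is off in the direction ``even half-edge weights $\Rightarrow$ bipartite'': a $\ZZ/2\ZZ$ potential built from uniformly even weights is trivial and yields no $2$-coloring; the correct route is that even internal half-edge weights force the prescribed face weights $d-\deg(f)$ and $d+\deg(f)$ to be even, hence all face degrees even, hence bipartiteness, while the converse is Lemma~\ref{lem:parity} (proved on the mobile, not by constructing a second orientation and invoking uniqueness).
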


The proof is delayed to Section~\ref{sec:proofs}. Note that
Proposition~\ref{prop:Adaori} includes Proposition~\ref{prop:Etri} (case $d=1$, with all internal faces of degree $3$), and also 
Proposition~\ref{prop2}
(case $d=2$, with all internal faces of degree $4$, and 
all boundary faces of even degree) upon dividing by $2$ all weights on internal half-edges.

For $a\geq 1$, we denote by $\cVda$ the set of mobiles corresponding to (canonically oriented) maps in $\cBda$ via the master bijection. By Theorem~\ref{theo:master_boundary}, these are the boundary $\ZZ$-mobiles with a marked white vertex satisfying the following properties (which readily imply that the weight of every half-edge is in $\{-2,-1,\ldots,d\}$ and the excess is 
$d$):
\begin{compactitem}
%\item every half-edge weight is in $\{-2,-1,\ldots,d\}$,
\item every edge has weight $d-2$, 
\item every black vertex has degree $\delta$ in $\{d,d+1,d+2\}$ and weight $-\delta+d$,
\item the marked white vertex has $a$ incident legs and has weight $a-d$, while every non-marked white vertex has weight $d+\ell$ where $\ell$ is the number of incident legs.
\end{compactitem}
We also denote by $\cVdav$ the set of rooted mobiles obtained from mobiles in $\cVda$ by marking one of the $a$ legs of the marked white vertex.

Proposition~\ref{prop:Adaori} together with the master bijection (Theorem~\ref{theo:master_boundary}) and Lemmas~\ref{lem:decompose-d} and~\ref{lem:marked-face-d} give the following result
(see Figure~\ref{fig:bij_d_annular} for an example).

\begin{theo}\label{theo:bij_Ada}
For all $d,a\geq 1$, the set $\cBda$ (resp. $\vcBda$) of maps is in bijection with the set $\cVda$ (resp. $\cVdav$) of $\ZZ$-mobiles via the master bijection.
%OB. Removed
%If a map $M\in\cBda$ (embedded with its marked internal face $f_e$ as its outer face) corresponds to a mobile $T\in\cVda$ by this bijection, then every internal inner face of $M$ corresponds to a black vertex of $T$ of the same degree, every inner vertex of $M$ corresponds to a white vertex of $T$ with no leg, the boundary face $f_s$ corresponds to $w_0$, and every boundary face $f\neq f_s$ in $M$ corresponds to a non-marked white vertex of $T$ having $\mathrm{deg}(f)$ legs. 
%OB. added
Moreover, for all $d,a\geq 1$, the set $\vcAda$ of maps is in bijection with the set $\cUdv\times\cVdav$ of pairs of $\ZZ$-mobiles. The bijection is such that if the map $M$ corresponds to the pair of $\ZZ$-mobiles $(U,V)$, then every non-marked internal face of $M$ corresponds to a black vertex of $U\cup V$ of the same degree, every non-marked boundary face $f$ of $M$ corresponds to a non-marked white vertex of $U\cup V$ having $\mathrm{deg}(f)$ legs, and every inner vertex of $M$ corresponds to a white vertex of $U\cup V$ with no leg.
\end{theo}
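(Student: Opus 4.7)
The plan is to derive Theorem~\ref{theo:bij_Ada} as a direct consequence of Proposition~\ref{prop:Adaori} (whose proof is deferred to Section~\ref{sec:proofs}), together with the master bijection of Theorem~\ref{theo:master_boundary} and Lemmas~\ref{lem:decompose-d} and~\ref{lem:marked-face-d}. Since the substantive content is packaged into the canonical biorientation result, what remains reduces to a mechanical translation of parameters through $\hPhi$, in complete analogy with the treatment of the cases $d=1$ and $d=2$ in Sections~\ref{sec:general_case_quad_diss} and~\ref{sec:general_case_tri_diss}.

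First I would establish the bijection $\cBda \leftrightarrow \cVda$. Given $M \in \cBda$ viewed as a plane map with the marked internal face $f_e$ as outer face, Proposition~\ref{prop:Adaori} endows $M$ with a unique $d/(d-2)$-orientation placing it in $\wO_d$; applying $\hPhi$ then yields a boundary $\ZZ$-mobile $T$ of excess $d$. The defining properties of $\cVda$ follow directly from the parameter correspondences stated just before Theorem~\ref{theo:master_boundary}: internal edges of weight $d-2$ produce edges whose two half-edge weights sum to $d-2$; internal faces of degree $\delta$ and weight $d-\delta$ become black vertices of the same degree and weight; internal vertices of weight $d$ become non-marked white vertices (carrying no legs) of weight $d$; non-marked inner boundary faces become non-marked boundary white vertices with $\deg(f)$ legs and weight $d+\deg(f)$; and the marked boundary $f_s$, which by Proposition~\ref{prop:Adaori} has weight $a-d$, becomes a marked boundary white vertex with $a$ legs and weight $a-d$. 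Conversely, inverting $\hPhi$ on a mobile in $\cVda$ produces a biorientation in $\wO_d$ whose underlying map $M$ lies in $\cGda$ and carries a $d/(d-2)$-orientation, and the ``if and only if'' part of Proposition~\ref{prop:Adaori} forces $M \in \cBda$. The rooted version $\vcBda \leftrightarrow \cVdav$ then follows immediately, because under the reduction operation depicted in Figure~\ref{fig:reduce} the corners of an inner boundary face of $O$ are in one-to-one correspondence with the legs at the resulting boundary white vertex of $T$, so marking a corner of $f_s$ matches exactly marking one of the $a$ legs at the marked boundary white vertex of $T$.

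For the second part, the full bijection $\vcAda \leftrightarrow \cUdv \times \cVdav$ is obtained by composing three bijections already in hand: the canonical decomposition of Lemma~\ref{lem:decompose-d}, which bijects $\vcAda$ with $\vcAdd \times \vcBda$; the bijection $\vcAdd \leftrightarrow \cUdv$ of Lemma~\ref{lem:marked-face-d}; and the bijection $\vcBda \leftrightarrow \cVdav$ established above. The parameter correspondences on face-degrees, boundary-lengths, and inner vertices then assemble from the pieces: non-marked internal faces of $M$ split between internal faces of $M_1$ other than its marked face (yielding non-marked black vertices of $U$) and internal faces of $M_2$ other than its marked internal face (yielding non-marked black vertices of $V$); non-marked boundary faces of $M$ coincide with the non-marked inner boundary faces of $M_2$, producing non-marked white vertices of $V$ with the correct number of legs; and internal vertices of $M$ distribute between $M_1$ and $M_2$, all producing non-marked white vertices with no legs.

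The main potential obstacle is not in the proof of Theorem~\ref{theo:bij_Ada} itself, which is essentially a bookkeeping exercise, but in verifying that the parameter translations through $\hPhi$ really do match the constraints defining $\cVda$ and that no information is lost in the canonical decomposition. These verifications are straightforward applications of the local rule of Figure~\ref{fig:local_rule_edge_biori} combined with the reduction rule of Figure~\ref{fig:reduce}, entirely parallel to the checks already carried out in Sections~\ref{sec:general_case_quad_diss} and~\ref{sec:general_case_tri_diss}.
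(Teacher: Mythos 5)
Your proposal is correct and follows exactly the paper's own route: the paper derives Theorem~\ref{theo:bij_Ada} precisely as the composition of Proposition~\ref{prop:Adaori}, the master bijection of Theorem~\ref{theo:master_boundary}, and Lemmas~\ref{lem:decompose-d} and~\ref{lem:marked-face-d}, with the parameter correspondences read off from the discussion following Theorem~\ref{theo:master_boundary}. Your spelled-out verification of the weight and leg translations (in particular that $f_s$ of weight $a-d$ becomes the marked white vertex with $a$ legs and weight $a-d$) is exactly the bookkeeping the paper leaves implicit.
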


\subsection{Expressions of the counting series} 
For $d\geq 1$ we denote by $\cF_d$ the set of maps from $\cDd$ with a marked corner in the marked boundary face.
We denote by $F_d\equiv F_d(x_d,x_{d+1},x_{d+2};z_0,z_1,\ldots)$ the associated series, with $x_\delta$ (for $\delta\in\{d,d+1,d+2\}$) conjugate to the number of internal faces of degree $\delta$, $z_0$ conjugate to the number of internal vertices, and $z_i$ (for $i\geq 1$) conjugate to the number of non-marked boundary faces of degree $i$. 

We will express $F_d$ as the difference of two series of mobiles. 
First, observe that marking a corner in maps from $\cDd$, is equivalent to marking an ingoing outer half-edge in canonically oriented maps from $\cDd$. Hence 
$$
F_d=K-L,
$$
where $K$ (resp. $L$) is the series for orientations from canonically oriented maps from $\cDd$ with a marked ingoing half-edge (resp. with a marked ingoing inner half-edge). 

Now we make the following observation about the master bijection $\hPhi$ (which easily follows from the definition of $\hPhi$):
\begin{claim}\label{claim:minus}
For $d\geq 1$, the master bijection $\hPhi$ yields bijections (with same parameter correspondences) between:
\begin{compactitem}
\item orientations from $\cOw_{-d}$ with a marked ingoing half-edge and mobiles from $\wB_{-d}$ with a marked bud,
\item orientations from $\cOw_{-d}$ with a marked ingoing inner half-edge and mobiles from $\wB_{-d}$ with a marked half-edge (possibly a leg) incident to a white vertex. 
\end{compactitem}
\end{claim}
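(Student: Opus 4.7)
The plan is to establish both bijections by tracking, edge by edge, the action of the master bijection $\hPhi$ on the half-edges of $O\in\cOw_{-d}$. Recall that $\hPhi$ first applies the local rule of Figure~\ref{fig:local_rule_edge_biori} to each edge of $O$, then erases the outer black vertex $b$ together with the $d$ outer vertices of $O$ and the $d$ mobile edges joining $b$ to the outer vertices, and finally performs the reduction of Figure~\ref{fig:reduce} at each inner-boundary face. Every bud and every half-edge at a white vertex of $T=\hPhi(O)$ therefore originates from a uniquely determined edge of $O$ (unless it is erased along the way), so it suffices to verify the two bijections edge by edge.

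Concretely, for each edge $e$ of $O$ I would compute four local quantities: the number $\alpha(e)$ of ingoing half-edges of $e$; the number $\beta(e)$ of ingoing inner half-edges of $e$ (which equals $\alpha(e)$ unless $e$ is an outer-boundary edge, in which case $\beta(e)=\alpha(e)-1$); the number of buds of $T$ produced by $e$; and the number of half-edges at white vertices of $T$ (legs included) produced by $e$. The proof reduces to checking, in each case, that the buds produced by $e$ are $\alpha(e)$ in number and that the half-edges at white produced by $e$ are $\beta(e)$ in number. The cases to consider are: $e$ internal and $0$-way, $1$-way, or $2$-way; $e$ on an inner boundary (necessarily $1$-way by consistency); and $e$ on the outer boundary (also necessarily $1$-way).

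The verifications are then immediate from the local rule. For example, a $1$-way internal edge has $\alpha=\beta=1$ and produces one bud at the adjacent black vertex on one side and a black-white mobile edge whose white endpoint is a non-boundary white vertex, so it contributes $1$ bud and $1$ half-edge at white. A $1$-way inner-boundary edge also has $\alpha=\beta=1$; it produces one bud at the adjacent internal face's black vertex (not at the inner-boundary black vertex, which carries no bud by the observation preceding Theorem~\ref{theo:master_boundary}) together with a black-white edge which is contracted in the reduction step and replaced by a leg at the inner-boundary white vertex, giving again $1$ bud and $1$ half-edge at white. A $1$-way outer-boundary edge has $\alpha=1$ and $\beta=0$: the local rule places one bud at the adjacent internal face's black vertex (surviving erasure) and one black-white edge from the outer vertex to $b$ (which is erased along with its endpoints), so this contributes $1$ bud and $0$ half-edges at white. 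The $0$-way and $2$-way internal cases are checked analogously: a $0$-way edge produces one black-black mobile edge and no buds or white-vertex halves, while a $2$-way edge produces two buds at the two adjacent black vertices and two half-edges at the white endpoints of the resulting white-white mobile edge.

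Summing over $e$ then yields the two global bijections, and because the pairing is defined entirely by the local rule of $\hPhi$ it is automatically compatible with the other parameter correspondences given in Theorem~\ref{theo:master_boundary}. The step needing the most care will be identifying, in each boundary case, on which side of the edge the bud is placed, so that both the inner-boundary black vertices and $b$ end up carrying no buds; but this is already fixed once the convention for ``left face/right face'' in the local rule of Figure~\ref{fig:local_rule_edge_biori} is fixed, and it is precisely what makes the reduction of Figure~\ref{fig:reduce} work.
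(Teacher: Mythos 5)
Your proposal is correct and matches the paper's (implicit) argument: the paper states Claim~\ref{claim:minus} without proof, asserting that it follows directly from the definition of $\hPhi$, and your edge-by-edge verification via the local rule (each edge contributing as many surviving buds as it has ingoing half-edges, and as many surviving white half-edges or legs as it has ingoing inner half-edges) is exactly that routine check. The only slight imprecision is that the white endpoint of the mobile edge produced by an internal $1$-way edge may become a \emph{boundary} white vertex after reduction (if the head of the edge lies on an inner boundary), rather than a non-boundary one as you state, but this does not affect the count or the correspondence.
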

By Theorem \ref{theo:bij_Dd} and Claim~\ref{claim:minus}, the series $K$ (resp. $L$) is also the series of mobiles from $\cT_d$ with a marked bud (resp. with a marked half-edge incident to a white vertex), with  $x_\delta$ conjugate to the number of black vertices of degree $\delta$, and $z_i$ conjugate to the number of white vertices with $i$ legs. 

Similarly as in Sections~\ref{sec:proof_counting_quad} and~\ref{sec:proof_counting_tri}, we introduce the notion of \emph{planted mobiles} to formulate recursive decompositions of mobiles. 
Precisely, for $d\geq 1$, 
a \emph{planted mobile} $\tau$ of type $d$ is defined as a tree structure that can be obtained as one of the two connected components after cutting a mobile $T\in\cT_d$ at the middle of an edge $e$ (where $e$ is a complete edge, not a bud nor a leg). 
The half-edge $h$ of $e$ belonging to the kept component $\tau$ is called the \emph{root} of $\tau$, the incident vertex is the \emph{root-vertex} of $\tau$ and the weight of $h$ is the \emph{root-weight} of $\tau$. 
For $j\in\{-2,\ldots,d\}$ we denote by $\cW_j$ the set of planted
mobiles of root-weight $d-2-j$, and we let $W_j=W_j(x_d,x_{d+1},x_{d+2};z_0,z_1,\ldots)$ be the associated
series, with as usual $x_\delta$ conjugate to the number of black vertices of degree
$\delta$ and $z_i$ conjugate to the number of 
white vertices with $i$ legs. 
Note that for $j\geq d-2$ the root-vertex is black while for $j<d-2$
the root-vertex is white. Similarly as in~\cite{BeFu12b}, we find for $j\in\{d-2,d-1,d\}$, 
$$
W_j=[u^{j+2}]\sum_{i=d}^{d+2}x_iu^i\big(1+W_0+u^{-1}W_{-1}+u^{-2}\big)^{i-1}.
$$
To treat the case of a white root-vertex we introduce the polynomials
$$
h_j^{(k)}(X_1,\ldots,X_d):=[u^j]\big(1-\sum_{i=1}^du^iX_i\big)^{-k}.
$$
For $j\in\{-2,\ldots,d-3\}$, the series $W_j$ is then given by
$$
W_j=\sum_{k\geq 0}z_k\ \!h_{k+2+j}^{(k+1)}(W_1,\ldots,W_d).
$$
Indeed, when the root-vertex $w_0$ carries $k$ legs, the total weight at $w_0$ is $k+d$, with a contribution $d-2-j$ from the root, and a total contribution $k+2+j$ from the other incident half-edges at $w_0$. 

Then we can express $K$ and $L$ in terms of the $W_j$'s. First, note that $K=W_{d-2}$ (indeed, a mobile from $\cT_d$ with a marked bud identifies to a mobile in $\cW_{d-2}$, upon seeing the marked bud
as a half-edge of weight $0$). We have
$$
L=\sum_{j=-2}^{d-3}W_jW_{d-2-j}+\sum_{k\geq 1}z_k\ \!h_{d+k}^{(k)}(W_1,\ldots,W_d),
$$
where the first sum gathers the cases where the marked half-edge
belongs to an edge, and the second sum gathers the cases 
where the marked half-edge is a leg. Finally, we obtain:

\begin{theo}\label{theo:Fd}
For $d\geq 1$ the counting series $F_d=F_d(x_d,x_{d+1},x_{d+2};z_0,z_1,\ldots)$ is given by
$$
F_d=W_{d-2}-\sum_{j=-2}^{d-3}W_jW_{d-2-j}-\sum_{k\geq 1}z_k\ \!h_{d+k}^{(k)}(W_1,\ldots,W_d),
$$
where the series $W_{-2},W_{-1},\ldots,W_d$ are determined by the system 
\begin{align*}
W_j&=\sum_{k\geq 0}z_k\ \!h_{k+2+j}^{(k+1)}(W_1,\ldots,W_d)\ \ \mathrm{for}\ j\in\{-2,\ldots,d-3\},\\[-.2cm]
W_j&=[u^{j+2}]\sum_{i=d}^{d+2}x_iu^i\big(1+W_0+u^{-1}W_{-1}+u^{-2}\big)^{i-1}\ \ \mathrm{for}\ j\in\{d-2,d-1,d\}.
\end{align*}
\end{theo}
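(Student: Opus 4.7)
The plan is to apply the master bijection (Theorem~\ref{theo:bij_Dd} together with Claim~\ref{claim:minus}) to translate the counting of $\cF_d$ into a generating function computation on $\cT_d$-mobiles, and then to derive the stated recursive system by decomposing planted mobiles at their root-vertex.

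First I would reinterpret $F_d$ at the mobile level. In the canonical biorientation of a map in $\cDd$ (Proposition~\ref{prop:Ddori}), every outer edge of the marked boundary face $f_s$ is 1-way of weights $(0,1)$ with $f_s$ on its right, so the $d$ corners of $f_s$ correspond bijectively to the $d$ ingoing outer half-edges. This gives $F_d = K - L$, where $K$ (resp.\ $L$) counts canonically oriented maps in $\cDd$ with a marked ingoing (resp.\ marked ingoing inner) half-edge. By Claim~\ref{claim:minus} applied through $\hPhi$, the series $K$ equals that of mobiles in $\cT_d$ with a marked bud, while $L$ equals that of mobiles in $\cT_d$ with a marked half-edge---possibly a leg---incident to a white vertex.

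Second I would express $K$ and $L$ in terms of the planted mobile series $W_j$. A marked bud being a half-edge of weight $0$ at a black vertex, the whole mobile can be viewed as a planted mobile rooted at the bud, of root-weight $0$, i.e.\ in $\cW_{d-2}$; hence $K = W_{d-2}$. For $L$ I would split into two cases. If the marked half-edge belongs to a complete edge $e$, cutting $e$ in the middle yields two planted mobiles whose root-weights sum to $d-2$ (since every edge of $\cT_d$ has total weight $d-2$); the half containing the marked half-edge is white-rooted, hence in $\cW_j$ for some $j \in \{-2,\ldots,d-3\}$, and the other half is in $\cW_{d-2-j}$, contributing $\sum_{j=-2}^{d-3} W_j W_{d-2-j}$. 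If instead the marked half-edge is a leg at a white vertex $w_0$ carrying $k \geq 1$ legs, then decomposing at $w_0$ reveals $k$ cyclic slots (the arcs delimited by the $k$ legs around $w_0$, with the distinguished leg fixing the starting point); each slot holds an arbitrary sequence of non-leg half-edges attached to sub-planted mobiles of series $W_\beta$ at white-side weight $\beta \in \{1,\ldots,d\}$. Introducing an auxiliary variable $u$ tracking the total weight $d+k$ at $w_0$, the $k$-slot structure contributes $[u^{d+k}](1-\sum_{i=1}^{d} u^i W_i)^{-k} = h_{d+k}^{(k)}(W_1,\ldots,W_d)$, and summing over $k$ with weight $z_k$ yields the leg term.

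Finally I would derive the recursive system for $W_{-2},\ldots,W_d$ by the same kind of decomposition at the root-vertex of a planted mobile. For $j \in \{-2,\ldots,d-3\}$ the root-vertex is white of weight $d+k$ (for $k \geq 0$ legs); the root half-edge (weight $d-2-j$) together with the $k$ legs splits the cyclic order into $k+1$ slots, so the same $u$-bookkeeping gives $W_j = \sum_{k \geq 0} z_k\, h_{k+2+j}^{(k+1)}(W_1,\ldots,W_d)$. For $j \in \{d-2,d-1,d\}$ the root-vertex is black, of degree $i \in \{d,d+1,d+2\}$ (weighted by $x_i$); its $i-1$ non-root half-edges, placed in the linear order induced by the root, each contribute one of the terms of $1 + W_0 + u^{-1}W_{-1} + u^{-2}$ (a bud, or a complete edge of black-side weight $0$, $-1$, or $-2$ leading to an appropriate sub-planted mobile), and extracting $[u^{j+2}]$ from $\sum_i x_i u^i(\cdots)^{i-1}$ yields the black-root equation. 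The main technical point in the argument is the slot-counting around vertices where the cyclic symmetry is broken by a root half-edge or a distinguished leg (which controls whether the exponent in $h^{(\cdot)}$ is $k+1$ or $k$); once this is set up, the rest of the derivation is routine generating function bookkeeping.
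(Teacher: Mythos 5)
Your proposal follows essentially the same route as the paper: the identity $F_d=K-L$ via marked ingoing (resp.\ ingoing inner) half-edges, the transfer to mobiles through Claim~\ref{claim:minus} and Theorem~\ref{theo:bij_Dd}, the identification $K=W_{d-2}$, the two-case split of $L$ (cut edge versus marked leg), and the root-vertex decomposition of planted mobiles with the slot-counting that produces the $h^{(\cdot)}_{\cdot}$ polynomials. One small point worth noting: your combinatorial description of the black-root equation (each non-root half-edge is a bud or an edge of black-side weight $0$, $-1$ or $-2$ ``leading to an appropriate sub-planted mobile'') actually yields the factor $\big(1+W_0+u^{-1}W_{-1}+u^{-2}W_{-2}\big)^{i-1}$, with a $W_{-2}$ attached to the $u^{-2}$ term; this corrected form is what the specializations in Section~\ref{sec:counting} and the $d=2$ system displayed after Theorem~\ref{theo:Ada} require, so the bare $u^{-2}$ in the statement (which you copied verbatim while describing combinatorics that produces $u^{-2}W_{-2}$) appears to be a typo in the theorem rather than a gap in your argument.
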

Note that Theorem~\ref{theo:Fd} provides a generalization 
of the expression found in~\cite{BeFu12} for the series
of rooted $d$-angulations of girth $d$, for $d\geq 3$ (recovered here
by taking $x_{d+1}=x_{d+2}=0$ and $z_i=0$ for $i\geq 1$).

We can also derive expressions for the generating functions of maps in $\vcAda$. 
More precisely, we determine below the generating function $\vAda=\vAda(x_d,x_{d+1},x_{d+2};z_0,z_1,\ldots)$ of $\vcAda$, where the variable $x_{\delta}$ is conjugate to the number of non-marked  
internal faces of degree $\delta$, $z_0$ is conjugate to the number of 
internal vertices, and for $i\geq 1$ $z_i$ is conjugate to the number of non-marked
boundaries of length $i$. 

Let $\Ud\equiv \Ud(x_d,x_{d+1},x_{d+2};z_0,z_1,\ldots)$  be the series of mobiles from $\cUdv$, with 
 $x_\delta$ conjugate to the number of black vertices of degree $\delta$, and $z_i$ conjugate to the number of white vertices with $i$ legs. 
Since  the mobiles in $\cUdv$ are obtained from the mobiles in 
$\cT_d$ by marking a corner at a black vertex of degree $d$, we get 
$$
\Ud=(1+W_0)^d.
$$
For all $a\geq 1$, let $\Vd^{(a)}\equiv\Vd^{(a)}(x_d,x_{d+1},x_{d+2};z_0,z_1,\ldots)$ be the generating function of mobiles in $\cVdav$ with 
 $x_\delta$ conjugate to the number of black vertices of degree $\delta$, and $z_i$ conjugate to the number of  non-marked white vertices with $i$ legs. 
The decomposition at the marked vertex gives 
$$\Vd^{(a)}=h_{a-d}^{(a)}(W_1,\ldots,W_d).$$
Since Theorem \ref{theo:bij_Ada} yields $\vAda=\Ud\cdot\Vd^{(a)}$ we obtain the following result.
%% We can also derive expressions for the generating functions of maps in $\vcAda$. 
%% For $d,a\geq 1$, we denote by $\vAda$ (resp. $\vBda$) the series of $\vcAda$ (resp. of $\vcBda$) with $x_{\delta}$ conjugate to the number of 
%%  internal faces of degree $\delta$, $z_0$ conjugate to the number of 
%% internal vertices, and $z_i$ (with $i\geq 1$) conjugate to the number of non-marked
%% boundaries of length $i$. We have seen that $\cAdd$ is in bijection
%% with the set of mobiles from $\cT_d$ with a marked black vertex of degree $d$,
%% hence $\vcAdd$ is in bijection with the set of mobiles from $\cT_d$ with a marked
%% corner at a black vertex of degree $d$, 
%% from which we find
%% $$
%% \vAdd=(1+W_0)^d.
%% $$
%% Regarding $\vBda$, it follows from Theorem~\ref{theo:bij_Ada} 
%% that $\vcBda$
%% is in bijection with mobiles from $\cVda$ with a marked leg
%% incident to the marked white vertex, from which we find
%% $$
%% \vBda=h_{a-d}^{a}(W_1,\ldots,W_d).
%% $$
%% Since Lemma~\ref{lem:decompose-d} yields $\vAda=\vAdd\cdot\vBda$ we obtain:

\begin{theo}\label{theo:Ada}
For $a,d\geq 1$ with $a\geq d$, the series $\vAda=\vAda(x_d,x_{d+1},x_{d+2};z_0,z_1,\ldots)$ is given by
$$
\vAda=(1+W_0)^d\cdot h_{a-d}^{(a)}(W_1,\ldots,W_d),
$$
with the same series $W_i$ as in Theorem~\ref{theo:Fd}. 
\end{theo}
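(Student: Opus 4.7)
The plan is to derive the formula as a direct consequence of the bijection of Theorem~\ref{theo:bij_Ada}, which yields the factorization $\vAda = \Ud \cdot \Vd^{(a)}$; hence it suffices to compute the two factors in terms of the planted-mobile series $W_0,W_1,\ldots,W_d$ already introduced in Theorem~\ref{theo:Fd}.

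First I would compute $\Ud$ by decomposing a mobile $U \in \cUdv$ at its marked black vertex $v$, which has degree $d$ and therefore weight $-d+d = 0$. Since the weights of the non-bud half-edges incident to $v$ are non-positive integers summing to $0$, each such half-edge must have weight exactly $0$. As every edge carries total weight $d-2$, the opposite half-edge of any edge incident to $v$ has weight $d-2$, so cutting at that edge and keeping the side away from $v$ produces a planted mobile with root-weight $d-2$, i.e.\ an element of $\cW_0$. The marked corner at $v$ linearizes the cyclic sequence of the $d$ half-edges around $v$: each of these $d$ positions is independently either a bud (contributing $1$) or a non-bud half-edge carrying a planted mobile (contributing $W_0$). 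Since the marked vertex is not tracked by any variable (it corresponds to the marked internal face $f_e$ of $\vAda$), this gives $\Ud = (1+W_0)^d$.

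Next I would compute $\Vd^{(a)}$ by decomposing a mobile $V \in \cVdav$ at its marked white vertex $w$. By the defining properties of $\cVdav$, the vertex $w$ carries $a$ legs (one of which is marked) together with some number $r \geq 0$ of non-leg half-edges whose weights are positive integers in $\{1,\ldots,d\}$ summing to $a-d$. Cutting at a non-leg half-edge of weight $i$ produces, on the far side, a planted mobile of root-weight $d-2-i$, that is, an element of $\cW_i$, counted by $W_i$. Using the marked leg to linearize, the cyclic sequence around $w$ becomes a linear arrangement of $a-1$ indistinguishable legs and $r$ non-leg half-edges, with $\binom{a-1+r}{r}$ arrangements for each choice of $r$. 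Tracking the weight constraint with a formal variable $u$, one obtains
\[
\Vd^{(a)} \;=\; [u^{a-d}] \sum_{r\geq 0} \binom{a-1+r}{r}\Big(\sum_{i=1}^d u^i\, W_i\Big)^r
\;=\; [u^{a-d}]\Big(1-\sum_{i=1}^d u^i W_i\Big)^{-a}
\;=\; h_{a-d}^{(a)}(W_1,\ldots,W_d),
\]
by the very definition of the polynomials $h_j^{(k)}$.

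Multiplying the two expressions then yields $\vAda=(1+W_0)^d\cdot h_{a-d}^{(a)}(W_1,\ldots,W_d)$, which is the desired identity. The only real content beyond the bijection is the weight bookkeeping in the decomposition at the two marked vertices; the main (mild) subtlety to double-check is that the cyclic-to-linear conversion contributes a binomial factor $\binom{a-1+r}{r}$ (rather than $\binom{a+r}{r}$), so as to produce the exponent $-a$ in the series $h_{a-d}^{(a)}$ — this is exactly where fixing the marked leg as the first element of the linear sequence is essential.
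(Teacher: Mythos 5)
Your proof is correct and follows the same route as the paper: the factorization $\vAda=\Ud\cdot\Vd^{(a)}$ from Theorem~\ref{theo:bij_Ada}, followed by the decompositions $\Ud=(1+W_0)^d$ at the marked black vertex and $\Vd^{(a)}=h_{a-d}^{(a)}(W_1,\ldots,W_d)$ at the marked white vertex. The paper states these two identities with almost no justification, so your careful weight bookkeeping (in particular the $\binom{a-1+r}{r}$ count from linearizing at the marked leg, which produces the exponent $-a$) simply fills in the details of the intended argument.
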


Theorem~\ref{theo:Ada} provides a generalization 
of the expression found in~\cite{BeFu12} for the series
of rooted $d$-angulations of girth $d$ (for $d\geq 3$) with a boundary
of length $a$ (recovered here
by taking $x_{d+1}=x_{d+2}=0$ and $z_i=0$ for $i\geq 1$). 

For the case $d=2$, let $G^{(a)}(t,x_3,x_4)$ be the series $A_2^{(a)}$
under the specialization 
$\{x_2=0,z_0=t,z_i=0\ \mathrm{for}\ i\geq 1\}$. This gives
the enumeration of loopless maps with internal faces of degree
either $3$ or $4$ (with respective variables $x_3$ and $x_4$), 
a single boundary face of degree $a$, and a marked
edge (indeed the marked internal face of degree $2$ can be seen as collapsed into a marked edge). By Theorem~\ref{theo:Ada},  
$$
G^{(a)}(t,x_3,x_4)=(1+W_0)^2\cdot [u^{a-2}](1-uW_1-u^2W_2)^{-a},
$$
where the series $W_0,W_1,W_2$ are given by the system
\begin{align*}
W_{-2}&=t,\\
W_{-1}&=tW_1,\\
W_0\ \ &=2x_3W_{-1}(1+W_0)+3x_4W_{-2}(1+W_0)^2+3x_4W_{-1}\ \!\!^{2}(1+W_0),\\ W_1\ \ &=x_3(1+W_0)^2+3x_4W_{-1}(1+W_0)^2,\\
W_2\ \ &=x_4(1+W_0)^3. 
\end{align*}
 
Under the further specialization $\{x_3=1,x_4=0\}$, 
the system simplifies to $\{W_2=0, W_1=(1+W_0)^2, W_0=2t(1+W_0)^3\}$, and we find
\begin{align*}
G^{(a)}(t,1,0)&=(1+W_0)^2\cdot[u^{a-2}](1-uW_1)^{-a}\\
&=(1+W_0)^2\cdot\binom{2a-3}{a-2}W_1^{a-2}=\binom{2a-3}{a-2}(1+W_0)^{2a-2}. 
\end{align*}

The Lagrange inversion formula then gives
\begin{align*}
[t^n]G^{(a)}(t,1,0)&=\binom{2a-3}{a-2}\frac1{n}[y^{n-1}](2a-2)2^n(1+y)^{2a-3+3n}\\
&=2^{n+1}\frac{(2a-3)!}{(a-2)!^2}\cdot\frac{(2a-3+3n)!}{n!(2a-2+2n)!},
\end{align*}
The coefficient $[t^n]G^{(a)}(t,1,0)$ gives the number
of loopless triangulations with one boundary of length $a$, 
$n$ internal vertices, a marked corner in the boundary face, 
and a marked edge. Since such a triangulation has $2a+3n-3$ edges
(by the Euler relation), we recover Mullin's enumeration formula~\cite{mullin1965counting} whose first bijective proof 
was found in~\cite{PS03a}: 
\begin{coro}[\cite{mullin1965counting,PS03a}]\label{coro:mullin}
The number of loopless triangulations with one boundary of length $a$, 
$n$ internal vertices, and a marked corner in the boundary face is 
$\ds 2^{n+1}\frac{(2a-3)!}{(a-2)!^2}\cdot\frac{(2a-4+3n)!}{n!(2a-2+2n)!}$. 
\end{coro}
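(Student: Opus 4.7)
The strategy is immediate from the derivation performed just above the statement: the series $G^{(a)}(t,1,0)$ counts loopless triangulations with one boundary of length $a$, a marked corner on that boundary, a marked edge, and $n$ internal vertices tracked by $t^n$, so Mullin's formula follows at once upon dividing by the common number of edges of such triangulations.

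First I would justify the combinatorial meaning of $[t^n]G^{(a)}(t,1,0)$. Starting from $\vAda$ with $d=2$, which by Theorem~\ref{theo:Ada} enumerates maps in $\vec{\cA}_2^{(a)}$, the specialization $\{x_2=0,\ z_0=t,\ z_i=0 \text{ for } i\geq 1,\ x_3=1,\ x_4=0\}$ forbids non-marked internal faces of degree $2$ and non-marked boundary faces, forces every non-marked internal face to be a triangle, and tracks internal vertices by $t$. The marked internal face of degree $2$ is then interpreted as a marked edge (obtained by contracting that $2$-face back into an edge, as in the reverse of Lemma~\ref{lem:edge_blow_tri}), and the marked boundary corner is the one inherited from $\vec{\cA}_2^{(a)}$. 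The $f_s$-internal girth at least $2$ condition amounts, in this restricted regime where the marked $2$-face is collapsed to an edge, exactly to the triangulation being loopless.

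Next I would apply the Euler relation to compute the edge count. With $V = a+n$ vertices, $F = I+1$ faces ($I$ triangular internal faces plus the single boundary face of degree $a$), and the degree sum $2E = 3I + a$, the identity $V - E + F = 2$ yields $E = 2a + 3n - 3$, independent of the specific triangulation. Consequently, dividing the expression for $[t^n]G^{(a)}(t,1,0)$ derived just above by $E = 2a+3n-3$ removes the choice of the marked edge and converts the factor $(2a-3+3n)!/(2a+3n-3)$ into $(2a-4+3n)!$, giving the announced formula. There is no real obstacle here: all the substantive work, namely solving the polynomial system for $W_{-2},\dots,W_2$ and applying Lagrange inversion, has already been carried out in the preceding lines; the corollary is an immediate bookkeeping consequence.
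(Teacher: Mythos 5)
Your proposal is correct and follows exactly the paper's own route: interpret $[t^n]G^{(a)}(t,1,0)$ as counting loopless triangulations with one boundary of length $a$, a marked boundary corner, a marked edge and $n$ internal vertices, compute $E=2a+3n-3$ by the Euler relation, and divide out the marked edge to convert $(2a-3+3n)!$ into $(2a-4+3n)!$. The only difference is that you spell out the bookkeeping (the combinatorial meaning of the specialization and the Euler computation) that the paper leaves implicit in a single sentence.
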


\vspace{.2cm}

Finally, similarly as in~\cite{BeFu12,BeFu12b}, the 
expressions simplify slightly in the bipartite case. 
Let $d=2b$ with $b\geq 1$. Let $\tF_{2b}:=F_{2b}$ under the
specialization $\{x_{2b+1}=0, z_{2i+1}=0\ \mathrm{for}\ i\geq 0\}$. 
And let $\tA_{2b}^{(2a)}:=A_{2b}^{(2a)}$ under the same 
specialization. Using the fact that, in the bipartite case, 
the weights of internal
half-edges are even in the canonical orientations (and thus, 
so are the half-edge weights in the associated mobiles), we easily
deduce from Theorems~\ref{theo:Fd} and~\ref{theo:Ada} the following expressions:
%derive (by a very similar derivation as to obtain Theorems~\ref{theo:Fd}and~\ref{theo:Ada}) the following expressions:

\begin{coro}[Bipartite case]\label{coro:bip}
The series $\tF_{2b}$ is expressed as
\begin{align*}
\tF_{2b}&=V_{b-1}-\sum_{j=-1}^{b-2}V_jV_{b-1-j}-\sum_{k\geq 1}z_{2k}h_{b+k}^{(2k)}(V_1,\ldots,V_b),
\end{align*}
where the series $V_{-1},V_{0},\ldots,V_b$ are determined by the system 
\begin{align*}
V_j&=\sum_{k\geq 0}z_{2k}\ \!h_{k+1+j}^{(2k+1)}(V_1,\ldots,V_b)\ \ \mathrm{for}\ j\in\{-1,\ldots,b-2\},\\[-.2cm]
V_{b-1}&=x_{2b}(1+V_0)^{2b-1}+(2b+1)\ \! x_{2b+2}V_{-1}(1+V_0)^{2b},\\[.2cm]
V_b &= x_{2b+2} (1+V_0)^{2b+1}.
\end{align*}
The series $\tA_{2b}^{(2a)}$ is expressed as 
$$
\tA_{2b}^{(2a)}=(1+V_0)^{2b}\cdot h_{a-b}^{(2a)}(V_1,\ldots,V_b).
$$
\end{coro}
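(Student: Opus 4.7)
My plan is to derive the corollary directly from Theorems \ref{theo:Fd} and \ref{theo:Ada} by applying the specialization $\{x_{2b+1}=0,\ z_{2i+1}=0\ \text{for}\ i\geq 0\}$ and exploiting a bipartiteness-induced parity constraint on the mobile weights.

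First I would note that the specialization restricts the enumerated maps to be bipartite (every face — internal or boundary — has even degree). By the final assertion of Propositions \ref{prop:Ddori} and \ref{prop:Adaori} (valid since $d=2b$ is even and, in the $\cA$-case, $2a$ is also even), the canonical biorientation of any such map assigns an \emph{even} weight to every internal half-edge. Transported through the master bijection $\hPhi$, this forces every half-edge of the associated mobile (other than buds and legs, which carry no weight) to have an even weight. Consequently, for the planted-mobile series $W_j$ of Theorem \ref{theo:Fd} — whose root-weight equals $d-2-j=2b-2-j$ — the series $W_j$ vanishes under the specialization whenever $j$ is odd. I would then set $V_i:=W_{2i}$ for $i\in\{-1,0,\ldots,b\}$, reducing the problem to rewriting the two systems in terms of the $V_i$.

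The only real computation is a parity identity for the polynomials $h_j^{(k)}$: whenever the odd-indexed arguments vanish, the substitution $v=u^2$ in the definition $h_j^{(k)}(X_1,\ldots,X_{2b})=[u^j](1-\sum_i u^iX_i)^{-k}$ yields
\[
h_j^{(k)}(X_1,\ldots,X_{2b})=\begin{cases} h_{j/2}^{(k)}(X_2,X_4,\ldots,X_{2b}) & \text{if $j$ is even},\\ 0 & \text{if $j$ is odd}.\end{cases}
\]
Applied with $X_{2i}=V_i$ (and $X_{\mathrm{odd}}=0$), this identity contracts every $h$-term appearing in Theorems \ref{theo:Fd} and \ref{theo:Ada} to a polynomial in $V_1,\ldots,V_b$ only.

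With these two ingredients in hand, the reduction is mechanical. In the white-root equations of Theorem \ref{theo:Fd} ($j\in\{-2,\ldots,d-3\}$) I keep only even $j=2i$ and, using $z_{2k'+1}=0$, only even $k=2k'$ in the sum, which yields the first stated line $V_i=\sum_{k\geq 0}z_{2k}\,h_{k+1+i}^{(2k+1)}(V_1,\ldots,V_b)$. In the black-root equations ($j\in\{2b-2,2b-1,2b\}$) I set $W_{-1}=0$ and $x_{2b+1}=0$; the $j=2b-1$ case vanishes automatically (it extracts an odd power of $u$ from a Laurent series in $u^2$), while for $j=2b-2$ and $j=2b$ a direct binomial expansion of $(1+W_0+u^{-2}W_{-2})^{i-1}$ and extraction of the coefficients of $u^0$ and $u^{-2}$ produces the stated expressions for $V_{b-1}$ and $V_b$. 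Finally, for $\tF_{2b}$ the sum $\sum_{j=-2}^{2b-3}W_jW_{2b-2-j}$ collapses to $\sum_{i=-1}^{b-2}V_iV_{b-1-i}$ (both factors vanish unless $j$ is even, in which case $2b-2-j$ is even too), the sum $\sum_{k\geq 1}z_k h_{2b+k}^{(k)}(W_1,\ldots,W_{2b})$ collapses to $\sum_{k\geq 1}z_{2k}h_{b+k}^{(2k)}(V_1,\ldots,V_b)$, and for $\tA_{2b}^{(2a)}$ the factor $h_{2a-2b}^{(2a)}(W_1,\ldots,W_{2b})$ contracts to $h_{a-b}^{(2a)}(V_1,\ldots,V_b)$ while $(1+W_0)^{2b}=(1+V_0)^{2b}$. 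No step is genuinely difficult; the only care needed is to track the $W_{-2}$-dependence (equivalently $V_{-1}$) in the $u^{-2}$ contribution of the black-root formula, which is precisely what produces the $V_{-1}$-factor in the stated expression for $V_{b-1}$.
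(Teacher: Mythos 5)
Your proposal is correct and follows exactly the route the paper takes (the paper gives only a one-sentence justification: bipartiteness forces even weights on internal half-edges in the canonical orientations, hence on the mobile half-edges, and the system of Theorems~\ref{theo:Fd} and~\ref{theo:Ada} contracts accordingly). Your write-up simply fills in the mechanical details — the vanishing of $W_j$ for odd $j$, the parity identity for $h_j^{(k)}$, and the $u^{-2}W_{-2}$ contribution producing the $V_{-1}$ term — all of which check out.
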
 

We now gives analogues of  Theorem~\ref{theo:krikun_triang} and Theorem~\ref{theo:krikun_quad} for other classes of triangulations and quadrangulations.
A map with boundaries having a marked boundary face $f_s$ is called \emph{internally loopless} (resp. \emph{internally simple}) if the $f_s$-internal girth is at least $2$ (resp. at least $3$).  
We give below factorized counting formulas (analogous to Krikun's formula \eqref{theo:krikun_triang})  when prescribing the number of internal vertices and the lengths of the boundaries  for internally loopless and internally simple triangulations, and for internally simple bipartite quadrangulations. 
This yields multivariate generalizations of the known formulas for the case of a single boundary, which were 
originally due to Mullin for loopless triangulations~\cite{mullin1965counting} 
(as already recovered in Corollary~\ref{coro:mullin}), 
and to Brown for simple triangulations~\cite{Br} and simple
quadrangulations~\cite{Br65}.

\begin{theo} \label{thm:krikun-analogues}
Let $m\geq 0$, $r\geq 0$, and let 
$a,a_1,\ldots,a_r$ be positive 
integers.  
For $s\in\{1,2,3\}$ let $\cT_{s}[m;a,a_1,\ldots,a_r]$ be the 
number of triangulations with 
 $m$ internal vertices,  
 $r+1$ (labeled from $0$ to $r$) 
boundary faces $f_0,f_1,\ldots,f_r$ such that $\mathrm{deg}(f_0)=a$, 
$\mathrm{deg}(f_i)=a_i$ for $1\leq i\leq r$, with a distinguished
corner in each boundary face, and such that
the internal girth with respect to $f_0$ is at least $s$. Note
that Theorem~\ref{theo:krikun_triang} gives a formula
for $|\cT_1[m;a,a_1,\ldots,a_r]|$. Analogously, we get  
$$
|\cT_2[m;a,a_1,\ldots,a_r]|=\frac{(2a-3)!}{(a-2)!^2}\cdot 2^{m+r+1}\cdot\frac{(3m+3r+2b-4)!}{m!(2m+2r+2b-2)!}\prod_{i=1}^ra_i\binom{2a_i+1}{a_i},
$$
$$
|\cT_3[m;a,a_1,\ldots,a_r]|=\frac{2(2a-3)!}{(a-3)!(a-1)!}\cdot\frac{(4m+4r+2b-5)!}{m!(3m+3r+2b-3)!}\prod_{i=1}^ra_i\binom{2a_i+2}{a_i},
$$
where  $b=a+a_1+\ldots+a_r$ is the total boundary length.

For $s\in\{1,2\}$ let  $\cQ_{s}[m;a,a_1,\ldots,a_r]$ be the 
number of quadrangulations with 
 $m$ internal vertices,  
 $r+1$ (labeled from $0$ to $r$) 
boundary faces $f_0,f_1,\ldots,f_r$ such that $\mathrm{deg}(f_0)=2a$, 
$\mathrm{deg}(f_i)=2a_i$ for $1\leq i\leq r$, with a distinguished
corner in each boundary face,  and such that
the internal girth with respect to $f_0$ is at least $2s$. Note
that Theorem~\ref{theo:krikun_quad} gives a formula
for $|\cQ_1[m;a,a_1,\ldots,a_r]|$.  Analogously, we get 
$$
|\cQ_2[m;a,a_1,\ldots,a_r]|=\frac{3(3a-2)!}{(a-2)!(2a-1)!}\cdot\frac{(3m+3r+3b-4)!}{m!(2m+2r+3b-2)!}\prod_{i=1}^r2a_i\binom{3a_i+1}{a_i},
$$ 
where  $b=a+a_1+\ldots+a_r$ is the half total boundary length.
\end{theo}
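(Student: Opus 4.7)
The plan is to apply Theorem~\ref{theo:Ada} (and its bipartite variant Corollary~\ref{coro:bip}) specialized so that every non-marked internal face has the minimum admissible degree $d$: take $x_d=1$ and $x_{d+1}=x_{d+2}=0$, with $d=2$ for $\cT_2$, $d=3$ for $\cT_3$, and $d=4$ for $\cQ_2$.

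Under this specialization the black-rooted identities of Theorem~\ref{theo:Fd} force $W_{d-1}=W_d=0$ (resp.\ $V_b=0$) and leave just $W_1=(1+W_0)^{d-1}$ (resp.\ $V_1=(1+V_0)^{2b-1}$). The polynomials $h_\ell^{(k)}$ then collapse to $\binom{\ell+k-1}{\ell}W_1^\ell$, and a short elimination of the three remaining white-rooted identities produces, in each case, a single univariate Lagrange-invertible equation for $Y:=1+W_0$ together with a product form for the counting series:
\begin{equation*}
Y\;=\;1\;+\;\sum_{k\ge 0}C_k\,z_k\,Y^{\alpha k+\beta}, \qquad \vAda\;=\;D(a)\,Y^{P(a)}.
\end{equation*}
Explicitly, $(C_k,\alpha,\beta;D(a),P(a))=\bigl(2\binom{2k+1}{k+1},2,3;\binom{2a-3}{a-2},2a-2\bigr)$ for $\cT_2$, $\bigl(\binom{2k+2}{k+2},2,4;\binom{2a-4}{a-3},2a-3\bigr)$ for $\cT_3$, and $\bigl(\binom{3k+1}{k+1},3,3;\binom{3a-3}{a-2},3a-2\bigr)$ for $\cQ_2$, with the understanding that in the last case only the $z_{2k}$'s are non-vanishing.

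I then extract $[z_0^m\prod_j z_j^{n_j}]\,Y^{P(a)}$ using the Lagrange inversion formula, with the standard ``dummy variable'' trick to handle several $z_j$'s simultaneously. Setting $N:=m+r$ and $U:=Y-1$, Lagrange inversion gives
\begin{equation*}
[z_0^m\textstyle\prod_j z_j^{n_j}]\,Y^{P(a)}\;=\;\frac{P(a)}{N}\,[U^{N-1}](1+U)^{P(a)-1}\Bigl(\textstyle\sum_{k\ge 0} C_k z_k(1+U)^{\alpha k+\beta}\Bigr)^{N}.
\end{equation*}
A multinomial expansion of the $N$-th power, combined with $[U^{N-1}](1+U)^{\star}=\binom{\star}{N-1}$, leaves a single explicit binomial times $C_0^m\prod_{i=1}^r C_{a_i}$. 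To translate back into $|\cT_i|$ or $|\cQ_i|$, I divide by $\prod_j n_j!\prod_{i=1}^r a_i$ to undo the labelling and corner-marking of the $r$ non-$f_0$ boundaries that $\cT_i/\cQ_i$ records but $\vAda$ does not, and multiply by the number of internal faces $n$ (for $\cT_3$ and $\cQ_2$, where the marked degree-$d$ face is chosen among the internal faces) or by the number of edges $e$ (for $\cT_2$, where the marked degree-$2$ face is produced by opening an arbitrary edge of the underlying triangulation). Using the Euler identities $n=2m+2r+b-2$, $e=3m+3r+2b-3$ for triangulations and $n=m+r+b-1$ for bipartite quadrangulations, and the symmetry $\binom{2a_i+\epsilon}{a_i+\epsilon}=\binom{2a_i+\epsilon}{a_i}$, delivers the factored formulas in the theorem.

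The principal obstacle is bookkeeping: one must check that the edge-opening in the $\cT_2$ case gives a bona fide bijection with the maps counted by $\vAda$ at $d=2$ --- in particular, that the $f_s$-\bgirth\ condition from Theorem~\ref{theo:bij_Ada} matches the assumed $f_0$-internal girth $\ge 2$ of $\cT_2$ --- and then carry out the binomial simplifications to match the exact prefactors stated. No new ideas beyond those already used in Sections~\ref{sec:proof_counting_quad} and~\ref{sec:proof_counting_tri} are needed.
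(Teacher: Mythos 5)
Your overall route---specialize Theorem~\ref{theo:Ada} and Corollary~\ref{coro:bip} to a single internal face degree, collapse the system to a one-variable Lagrange equation for $1+W_0$, extract coefficients by Lagrange inversion with the total $z$-degree $N=m+r$, and then convert the coefficient of the generating function into the count of labelled, corner-marked maps---is exactly the paper's proof. Two slips in the set-up: for $\cT_2$ the internal faces are triangles, so the correct specialization at $d=2$ is $x_3=1$, $x_2=x_4=0$ rather than ``$x_d=1$'', and accordingly $W_1=(1+W_0)^2$, not $(1+W_0)^{d-1}$; your displayed parameters $(\beta,P(a))=(3,2a-2)$ are the ones coming from the correct specialization, so this looks like a slip of the pen rather than a computational error. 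Note also that for $\cQ_2$ your $C_k=\binom{3k+1}{k+1}$ admits no symmetry identifying it with the $\binom{3a_i+1}{a_i}$ in the statement (unlike the triangulation cases, where $\binom{2k+1}{k+1}=\binom{2k+1}{k}$ and $\binom{2k+2}{k+2}=\binom{2k+2}{k}$ do hold), so the appeal to ``the symmetry'' does not close that case and the discrepancy must be confronted.

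The step that actually fails is the final translation. The coefficient $[z_0^m\prod_j z_j^{n_j}]A^{(a)}_d$ counts maps carrying a marked internal face of degree $d$ (for $d=2$, equivalently a marked edge) and whose $r$ non-marked boundaries are \emph{unlabelled} and carry \emph{no} distinguished corner, whereas $\cT_i[m;a,a_1,\ldots,a_r]$ carries no marked internal face but labels all boundaries and distinguishes a corner in each. Passing from the coefficient to $|\cT_i|$ therefore requires \emph{multiplying} by $\prod_j n_j!\,\prod_i a_i$ (to install the labels and corners) and \emph{dividing} by the number of edges $e$ (resp.\ by the number of internal faces) to forget the marked edge (resp.\ marked face); this is precisely the paper's $|\cT_2|=\tfrac1e\bigl(\prod_i i^{n_i}n_i!\bigr)[z_0^m z_1^{n_1}\cdots]A^{(a)}_2$. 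You do the opposite on both counts, so your answer differs from the correct one by the factor $e^2/\bigl(\prod_j n_j!\prod_i a_i\bigr)^2$---already for $r=0$ it is off by $e^2$---and does not reproduce the stated formulas. The fix is purely a matter of reversing the two operations, but as written the proof's concluding step is wrong.
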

\begin{proof}
The proof is very similar to the proof of Theorem~\ref{theo:krikun_triang} and Theorem~\ref{theo:krikun_quad}. It simply relies on the Lagrange inversion formula starting from the generating function expressions given in Theorem~\ref{theo:Ada} and Corollary~\ref{coro:bip}.  
We treat first the case of internally loopless triangulations. We consider the series  $A^{(a)}_2=\binom{2a-3}{a-1}(1+W_0)^{2a-2}$, where $W_0$ is defined by   $W_0=2\sum_{k\geq 0}z_k\binom{2k+1}{k}(1+W_0)^{2k+3}$. Theorem~\ref{theo:Ada} gives 
$$
|\cT_2[m;a,a_1,\ldots,a_r]|=\frac1{e}\Big(\prod_{i\geq 1}i^{n_i}n_i!\Big)\cdot[z_0^{m}z_1^{n_1}\ldots z_h^{n_h}]A^{(a)}_2,
$$
where  $e=3m+3r+2b-3$ corresponds to the number of edges, $n_i$ is the number of occurrences of $i$ among $a_1,\ldots,a_r$, and $h$ is the maximum of $a_1,\ldots,a_r$.
%% $$
%% |\cT_2[m;a,a_1,\ldots,a_r]|=\frac1{e}\Big(\prod_{i\geq 1}i^{n_i}n_i!\Big)\cdot[z_0^{m}z_1^{n_1}\ldots z_h^{n_h}]A^{(a)}_2,
%% $$
%% where $A^{(a)}_2=\binom{2a-3}{a-1}(1+W_0)^{2a-2}$, and $W_0=y$ is given by $y=2\sum_{k\geq 0}z_k\binom{2k+1}{k}(1+y)^{2k+3}$
%% (and we have taken $h$ to be the largest face-degree among $f_1,\ldots,f_r$).  
Then, the Lagrange inversion formula gives\footnote{To apply the formula we let $n=m+n_1+\cdots+n_h$ and let $y=t\phi(y)$, with 
$\phi(y)=2\sum_{k\geq 0}z_k\binom{2k+1}{k}(1+y)^{2k+3}$. Then, with $\psi(y)=(1+y)^{2a-2}$, we have 
$[t^n]\psi(y)=\frac{1}{n}[y^{n-1}]\psi'(y)\phi(y)^n$, so that
$[z_0^{m}z_1^{n_1}\ldots z_h^{n_h}]\psi(y)=\frac{1}{n}[y^{n-1}]\psi'(y)[z_0^{m}z_1^{n_1}\ldots z_h^{n_h}]\phi(y)^n$.} 
$$
[z_0^{m}z_1^{n_1}\ldots z_h^{n_h}]A^{(a)}_2=\frac{(2a-2)!}{(a-2)!(a-1)!}2^{m+r}\cdot\frac{(3m+3r+2b-3)!}{m!(2m+2r+2b-2)!}\cdot\prod_{i\geq 1}\frac1{n_i!}\binom{2i+1}{i}^{n_i},
$$
which yields the formula for $|\cT_2[m;a,a_1,\ldots,a_r]|$. 

To treat the case of internally simple triangulations, we consider the series $A^{(a)}_3=\binom{2a-4}{a-1}(1+W_0)^{2a-3}$, where $W_0$ is given by $W_0=\sum_{k\geq 0}z_k\binom{2k+2}{k}(1+W_0)^{2k+4}$. Theorem~\ref{theo:Ada} gives 
$$
 |\cT_3[m;a,a_1,\ldots,a_r]|=\frac1{f}\Big(\prod_{i\geq 1}i^{n_i}n_i!\Big)\cdot[z_0^{m}z_1^{n_1}\ldots z_h^{n_h}]A^{(a)}_3,
$$
where $f=2m+2r+b-2$ corresponds to the number of internal faces,  $n_i$ is the number of occurrences of $i$ among $a_1,\ldots,a_r$, and $h$ is the maximum of $a_1,\ldots,a_r$.
%%  we have, with $f=2m+2r+b-2$ 
%% the corresponding number of internal faces, and  
%% $n_i$ the 
%% number of faces of degree $i$ among $f_1,\ldots,f_r$,
%% $$
%% |\cT_3[m;a,a_1,\ldots,a_r]|=\frac1{f}\Big(\prod_{i\geq 1}i^{n_i}n_i!\Big)\cdot[z_0^{m}z_1^{n_1}\ldots z_h^{n_h}]A^{(a)}_3,
%% $$
%% where $A^{(a)}_3=\binom{2a-4}{a-1}(1+W_0)^{2a-3}$, and $W_0=y$ is given by $y=\sum_{k\geq 0}z_k\binom{2k+2}{k}(1+y)^{2k+4}$
%% (and we have taken $h$ to be the largest face-degree among $f_1,\ldots,f_r$).  
Then, the Lagrange inversion formula gives 
$$
[z_0^{m}z_1^{n_1}\ldots z_h^{n_h}]A^{(a)}_3=\frac{(2a-3)!}{(a-3)!(a-1)!}\cdot\frac{(4m+4r+2b-4)!}{m!(3m+3r+2b-3)!}\cdot\prod_{i\geq 1}\frac1{n_i!}\binom{2i+2}{i}^{n_i},
$$
which yields the formula for $|\cT_3[m;a,a_1,\ldots,a_r]|$. 
 
Finally, for internally simple quadrangulations, we consider the series $\widetilde{A}^{(2a)}_4=\binom{3a-3}{a-2}(1+V_0)^{3a-2}$,  where $V_0$ is given by
 $V_0=\sum_{k\geq 0}z_{2k}\binom{3k+1}{k}(1+V_0)^{3k+3}$. Corollary~\ref{coro:bip}  gives 
$$
 |\cQ_2[m;a,a_1,\ldots,a_r]|=\frac1{f}\Big(\prod_{i\geq 1}(2i)^{n_i}n_i!\Big)\cdot[z_0^{m}z_2^{n_1}\ldots z_{2h}^{n_h}]\widetilde{A}^{(2a)}_4, 
$$
where $f=m+r+b-1$  corresponds to the number of internal faces,   $n_i$ is the number of occurrences of $i$ among $a_1,\ldots,a_r$, and $h$ is the maximum of $a_1,\ldots,a_r$.
%% have, with $f=m+r+b-1$  
%% the corresponding number of internal faces, and  
%% $n_i$ the 
%% number of faces of degree $2i$ among $f_1,\ldots,f_r$,
%% $$
%% |\cQ_2[m;a,a_1,\ldots,a_r]|=\frac1{f}\Big(\prod_{i\geq 1}(2i)^{n_i}n_i!\Big)\cdot[z_0^{m}z_2^{n_1}\ldots z_{2h}^{n_h}]\widetilde{A}^{(2a)}_4, 
%% $$
%% where $\widetilde{A}^{(2a)}_4=\binom{3a-3}{a-2}(1+V_0)^{3a-2}$, and $V_0=y$ is given by $y=\sum_{k\geq 0}z_{2k}\binom{3k+1}{k}(1+y)^{3k+3}$
%% (and we have taken $2h$ to be the largest face-degree among $f_1,\ldots,f_r$).  
Then, the Lagrange inversion formula gives 
$$
[z_0^{m}z_2^{n_1}\ldots z_{2h}^{n_h}]\widetilde{A}^{(2a)}_4=\frac{(3a-2)!}{(a-2)!(2a-1)!}\cdot\frac{(3m+3r+3b-3)!}{m!(2m+2r+3b-2)!}\cdot\prod_{i\geq 1}\frac1{n_i!}\binom{3i+1}{i}^{n_i},
$$
which yields the formula for $|\cQ_2[m;a,a_1,\ldots,a_r]|$. 
\end{proof}

%%%%%%%%%%%%%%%%%%%%%%%%%%%%%%%%%%%%%%%%%%%%%%%%%%%%%%%%%%%%%%%%%%%%%%%%%%%%%%%

\section{Existence and uniqueness of the canonical orientations}\label{sec:proofs}
In this section, we give the proof of Propositions~\ref{prop:Ddori} and~\ref{prop:Adaori} (which also imply Propositions \ref{prop:2outerquad}, \ref{prop2}, \ref{prop3}, and \ref{prop4}).

\subsection{Preliminary results}
We first set some notation and preliminary results about orientations. 
%We start with some definitions and preliminary results. 
%Recall that an \nb\emph{-biorientation} is a plane map endowed with a weighted biorientation such that the weight of every outgoing half-edge is 0, and the weight of every ingoing half-edge is a positive integer.
We call \emph{$\NN$-biorientation}, a $\ZZ$-biorientation with no negative weight (that is, the weight of every outgoing half-edge is 0, and the weight of every ingoing half-edge is a positive integer). 

\begin{Definition}\label{def:alphabeta}
Let $M$ be a map with boundaries. Let $V$ be the set of internal vertices, let $E$ be the set of internal edges, and let $B$ be the set of boundaries. Let $\alpha$ be a function from $V\cup B$ to $\NN$, and let $\beta$ be a function from $E$ to $\NN$.
An $\alpha/\beta$\emph{-orientation} of $M$ is a consistent $\NN$-biorientation, such that any vertex or boundary $x\in V\cup B$ has weight $\alpha(x)$, and any edge $e$ has weight $\beta(e)$.
\end{Definition}
The following two lemmas are immediate consequences of the results in~\cite[Lemma 2 and Lemma 3]{BeFu12} applied to the map obtained from $M$ by contracting each boundary into a single vertex.

\begin{lem}\label{lem:exists-alpha}
Let $M,V,E,B,\al,\be$ be as in Definition~\ref{def:alphabeta}. 
There exists a consistent $\alpha/\beta$-orientation of $M$ if and only if 
\begin{compactenum}
\item[(i)] $\sum_{x\in V\cup B}\alpha(x)=\sum_{e\in E}\beta(e)$,
\item[(ii)] for each subset $X\subseteq V\cup B$, $\sum_{x\in X}\alpha(x)\geq \sum_{e\in E_X}\beta(e)$, where $E_X\subseteq E$ is the subset of internal edges for which both endpoints are internal vertices in $X$ or boundary vertices incident to a boundary in $X$.
\end{compactenum}
Moreover, $\alpha/\beta$-orientations are accessible from an internal vertex $v$ (resp. boundary vertex $v$) if and only if
\begin{compactenum}
\item[(iii)] for each subset $X\subseteq V\cup B$ not containing $v$ (resp. not containing the boundary face incident to $v$), $\sum_{v\in X}\alpha(x)> \sum_{e\in E_X}\be(e)$.
\end{compactenum}
\end{lem}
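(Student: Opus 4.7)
The plan is to reduce to the existing results \cite{BeFu12}, as suggested in the text preceding the lemma. Let $\siM$ denote the map obtained from $M$ by contracting each boundary $C\in B$ to a single vertex $v_C$ (thus collapsing the edges of $C$ to a point). Then $\siM$ has vertex set $V\cup\{v_C : C\in B\}$ and edge set $E$ (possibly with loops and multiple edges). I will extend $\alpha$ to $\siM$ by setting $\alpha(v_C):=\alpha(C)$ for each $C\in B$.

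The first step is to establish a natural bijection between consistent $\alpha/\beta$-orientations of $M$ and $\alpha/\beta$-orientations of $\siM$. Since consistency forces each boundary edge to be $1$-way with weights $(0,1)$ and the incident boundary face on its right, the orientation data of a consistent biorientation of $M$ is carried entirely by the internal edges; restricting this data to $E$ yields a biorientation of $\siM$. The conversion preserves weights: at an internal vertex $v\in V$ the total weight of ingoing half-edges is the same in $M$ and in $\siM$, while at $v_C\in\siM$ the total weight of ingoing half-edges matches, by definition, the weight $\alpha(C)$ of the boundary $C$ in $M$ (recall that boundary half-edges at vertices of $C$ contribute $0$ to this sum since they are either outgoing or ingoing of weight $0$, using the fact that the two boundary half-edges at any vertex of $C$ have weights $(0,0)$ or $(0,1)$ adding up to at most the edge weight, but more importantly the definition explicitly excludes half-edges lying on edges of $C$). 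The reverse direction is immediate: given an $\alpha/\beta$-orientation of $\siM$, one orients the boundary edges of $M$ consistently as above. This correspondence preserves the edge-weight constraints.

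Once the bijection is set up, the existence criteria (i) and (ii) are the direct translation of \cite[Lemma 2]{BeFu12} applied to $\siM$: subsets $X^*\subseteq V(\siM)$ are in natural bijection with subsets $X\subseteq V\cup B$, and the set $E_X$ defined in the statement is precisely the set of edges of $\siM$ with both endpoints in $X^*$. For accessibility, \cite[Lemma 3]{BeFu12} applied to $\siM$ gives the accessibility of $\alpha/\beta$-orientations of $\siM$ from a chosen vertex $w$ iff condition (iii) holds for subsets $X^*$ not containing $w$. For $v\in V$, taking $w=v$ yields the stated criterion verbatim. For a boundary vertex $v$ on boundary $C$, one takes $w=v_C$; this corresponds to accessibility of the biorientation of $M$ from the boundary face incident to $v$, which coincides with accessibility from $v$ itself because the boundary edges of $C$ form a directed cycle (each $1$-way with the boundary face on the right), so all vertices of $C$ are mutually accessible.

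The main technical point to verify is the weight correspondence at the contracted vertex $v_C$ and the handling of internal edges whose endpoints both lie on a common boundary (which become loops at $v_C$); both are straightforward once one uses the fact that boundary-edge half-edges at a vertex of $C$ carry weight $0$ by the consistency convention, so they contribute nothing to $\alpha(C)$. With this bookkeeping in place, the three conditions of the lemma are exactly the translations of the hypotheses of \cite[Lemmas 2--3]{BeFu12} for $\siM$, and no additional argument is required.
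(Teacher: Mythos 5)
Your proposal is correct and follows exactly the route the paper takes: the paper disposes of this lemma in one sentence by citing \cite[Lemmas 2 and 3]{BeFu12} applied to the map obtained by contracting each boundary into a single vertex, and your write-up simply fills in the (correct) bookkeeping for that reduction — the weight correspondence at the contracted vertices, the identification of $E_X$, and the equivalence of accessibility from a boundary vertex and from its boundary face via the directed contour cycle.
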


\begin{lem}\label{lem:uniqueminimal}
Let $M$ be a plane map with boundaries, and let $V,E,B,\al,\be$ be as in Definition~\ref{def:alphabeta}. 
Suppose that there exists an $\alpha/\beta$-orientation $\Om$ of $M$. 
If the outer face of $M$ is an internal face, then $M$ admits a unique minimal $\alpha/\beta$-orientation $\Om_0$. 
If the outer face of $M$ is a boundary face $f$ satisfying $\alpha(f)=0$, then $M$ admits a unique almost-minimal $\alpha/\beta$-orientation $\Om_0$. 
In addition, in both cases, $\Om$ is accessible from a vertex $v$ if and only if $\Om_0$ is accessible from $v$. 
\end{lem}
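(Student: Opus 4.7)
The plan is to reduce to the already-known case of maps without boundaries, namely~\cite[Lemma 3]{BeFu12}, as suggested in the paper. I would start by defining a plane map $M'$ obtained from $M$ by contracting each boundary $C\in B$ to a single vertex $v_C$, deleting the boundary edges in the process. Because consistency forces every boundary edge to be 1-way with weights $(0,1)$, boundary edges carry no choice; hence $\alpha/\beta$-orientations of $M$ are in canonical bijection with $\alpha'/\beta'$-orientations of $M'$, where $\alpha'=\alpha$ on internal vertices, $\alpha'(v_C)=\alpha(C)$, and $\beta'=\beta$. This bijection matches the definitions of the weight at $v_C$ in $M'$ and of the weight of the boundary $C$ in $M$, since both exclude the half-edges of $C$. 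In particular, existence of an $\alpha/\beta$-orientation of $M$ implies existence of an $\alpha'/\beta'$-orientation of $M'$.

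Next, I would analyze how ccw cycles transform under this contraction. An inner boundary cycle of $M$ is always \emph{cw}, since its 1-way edges have the boundary face on the right and that face lies in the interior of the cycle; hence inner boundary cycles play no role in minimality. By the same argument, the outer boundary cycle in Case~2 is always \emph{ccw}, because the outer boundary face lies in the exterior. In Case~1 I would take the same outer face for $M'$ and observe that, under the orientation bijection, the ccw cycles of $M$ are in bijection with the ccw cycles of $M'$. In Case~2 I would choose as outer face of $M'$ any face incident to $v_f$ (equivalently place $v_f$ at infinity), and observe that the ccw cycles of $M$ other than the outer boundary contour are in bijection with the ccw cycles of $M'$.

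I would then apply~\cite[Lemma 3]{BeFu12}, which yields a unique minimal $\alpha'/\beta'$-orientation $\Om_0'$ of $M'$. Transporting $\Om_0'$ back across the bijection gives, in Case~1, a consistent $\alpha/\beta$-orientation $\Om_0$ of $M$ with no ccw cycle (hence minimal and unique), and in Case~2, a consistent $\alpha/\beta$-orientation $\Om_0$ of $M$ whose only ccw cycle is the outer boundary contour (hence almost-minimal and unique). The accessibility statement transfers similarly from~\cite[Lemma 3]{BeFu12}: accessibility from an internal vertex $v$ in $M$ is equivalent to accessibility from $v$ in $M'$, and accessibility from a boundary vertex $v\in C$ in $M$ is equivalent to accessibility from $v_C$ in $M'$, since the 1-way boundary edges of $C$ allow one to reach every vertex of $C$ from any one of them.

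The main technical point I expect is the ccw-cycle correspondence: one must verify that no ccw cycle of $M$ uses boundary edges in a way not accounted for by the outer boundary in Case~2. This reduces to a short case analysis based on the fact that a 1-way boundary edge has the boundary face on its right, so any cycle crossing a boundary edge in its 1-way direction has the boundary face in its exterior, which is geometrically incompatible with the cycle following a boundary only partially (since the two sides of a boundary cycle are separated by its simple contour).
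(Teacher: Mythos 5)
Your overall strategy---contract each boundary to a single vertex and invoke \cite[Lemma 3]{BeFu12}---is exactly the reduction the paper itself indicates (and leaves undetailed), and your handling of the orientation bijection and of the accessibility transfer is fine. The genuine gap is in the step you yourself identify as the main technical point: the correspondence of ccw cycles. Your argument there rests on the claim that a ccw cycle of $M$ cannot follow a boundary ``only partially'', and this claim is false. Take an inner boundary $C$ with boundary face $f$, a proper arc $A$ of $C$ from $x$ to $y$ (in the direction of the $1$-way orientation of $C$), and a path $P$ of internal edges from $y$ back to $x$ meeting $C$ only at $x,y$. The theta-graph $C\cup P$ bounds three regions $f$, $F_{P,A}$ and $F_{P,A^c}$ (where $A^c$ is the complementary arc); if the outer face lies in $F_{P,A^c}$ and the edges of $P$ are $2$-way or $1$-way with $F_{P,A}$ on their left, then $\gamma=A\cup P$ is a simple ccw cycle: its interior is $F_{P,A}$, so $f$ lies in its exterior and every edge of $A$ has the interior on its left. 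There is no geometric incompatibility; the exterior of $\gamma$ simply reaches $f$ through the gap $A^c$.

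Consequently the asserted bijection between ccw cycles of $M$ and of $M'$ fails as stated: a ccw cycle of $M$ meeting a boundary in two disjoint arcs projects to a figure-eight at $v_C$ rather than a simple cycle, and conversely a ccw cycle of $M'$ through $v_C$ has two lifts (one per arc of $C$), of which only one is ccw in $M$, the correct choice depending on the position of the outer face. What the lemma actually requires is the weaker two-way implication that $M$ has a ccw cycle other than the outer contour if and only if $M'$ has a ccw cycle, and establishing it needs precisely the case analysis you skipped: extracting a ccw simple cycle from a non-simple projected walk, selecting the correct arc when lifting, and, in the boundary-outer-face case, checking that the choice of outer face of $M'$ among the faces incident to $v_f$ (together with the hypothesis $\alpha(f)=0$, which your argument never uses) does not spoil the correspondence. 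As written, the transfer of minimality and of uniqueness is therefore not justified.
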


%% Given a mapping $\alpha$ from $V\cup B$ to $\mathbb{N}$, we call \emph{$\alpha$-orientation} of $M$ an orientation which is \emph{consistent} (that is, every boundary edge is oriented with the boundary face on its right), and such that each internal vertex $v$ has indegree $\alpha(v)$, and each boundary $f$ has indegree $\alpha(f)$. The following lemma is an immediate consequence of the results in~\cite{Fe03} (upon seeing boundaries as ``big'' vertices).

%% \begin{lem}\label{lem:uniqueminimal}
%% Let $M$ be a plane map with boundaries, let $V$ be its set of internal vertices, and let $B$ be its set of boundaries. 
%% Let $\alpha:V\cup B\to \mathbb{N}$ be such that there exists an $\alpha$-orientation $X$ of $M$. 
%% If the outer face of $M$ is an internal face, then 
%% $M$ admits a unique minimal $\alpha$-orientation $X_0$. 
%% If the outer face of $M$ is a boundary face and the outer boundary $C_0$ satisfies
%% $\alpha(C_0)=0$, then $M$ admits a unique almost-minimal $\alpha$-orientation $X_0$. 
%% In addition, in both cases, 
%% $X$ is accessible from a vertex $v$ if and only if $X_0$ is accessible from $v$. 
%% \end{lem}

Next, we state a parity lemma for orientations in $\cOw_d$.

\begin{lem}\label{lem:parity}
Let $O$ be a consistent $\ZZ$-biorientation in $\cOw_d$ 
(for some $d\in\mathbb{Z}\backslash\{0\}$), such that every internal edge, internal vertex, internal face, boundary, has even weight. Then every 
internal half-edge also has even weight. 
\end{lem}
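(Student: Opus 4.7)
The plan is to pull the statement back to the associated boundary mobile $T=\hPhi(O)\in \wB_d$ via the master bijection, where the tree structure of $T$ makes the parity claim fall out of a leaf-pruning induction. Concretely, it suffices to establish (a) that every vertex of $T$ has even weight and (b) that every edge of $T$ (i.e., the sum of its two half-edge weights) has even weight, and then to induct on the size of $T$.

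For (a), the parameter correspondences recalled just after Theorem~\ref{theo:master_boundary} identify every black vertex of $T$ with an inner internal face of $O$ of the same weight, every non-boundary white vertex with an internal vertex of $O$ of the same weight, and every boundary white vertex with an inner boundary of $O$ of the same weight. The four parity hypotheses on $O$ therefore force each vertex of $T$ to have even weight. (The outer face of $O$ when $d>0$ and the outer boundary of $O$ when $d<0$ are both absorbed into the excess $d$ and contribute no vertex to $T$, so they cause no complication.) For (b), inspection of the local rule of Figure~\ref{fig:local_rule_edge_biori} shows that each edge of $O$ produces exactly one mobile edge carrying the same pair of half-edge weights, the mobile-edge type (black--black, black--white, or white--white) merely recording whether the $O$-edge is $0$-way, $1$-way, or $2$-way. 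The mobile edges arising from boundary edges of $O$, which carry the odd pair $(0,1)$, are precisely those incident to a black vertex sitting in a boundary face, and these are exactly the edges that get contracted during the reduction from $\Phi(O)$ to $\hPhi(O)$. Hence the edges surviving in $T$ correspond bijectively to the internal edges of $O$ and inherit their even weight.

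Given (a) and (b), a leaf-pruning induction on the size of $T$ finishes the proof. At a leaf $v$ of $T$ (regarding buds and legs as dangling), the unique non-bud/non-leg half-edge $h_v$ satisfies $w(h_v)=w(v)$, which is even by (a); the opposite half-edge $h_u$ on the same tree edge $e$ then satisfies $w(h_u)=w(e)-w(h_v)$, which is also even by (b). Removing $e$ decreases $w(u)$ by the even quantity $w(h_u)$ and leaves all other vertex and edge weights unchanged, so the smaller tree again satisfies (a) and (b); by induction, every half-edge of $T$ has even weight. Translating back through $\hPhi$ yields that every internal half-edge of $O$ has even weight. The only step that genuinely requires care is the bookkeeping for (b): one must verify that the local rule is weight-preserving on each of the three edge types and that the single odd half-edge of every boundary edge of $O$ is invariably swallowed by the reduction step rather than surviving as a half-edge of $T$.
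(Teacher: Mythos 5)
Your proof is correct and takes essentially the same route as the paper: both pass to the mobile $T=\hPhi(O)$, check that the hypotheses force every vertex and every (reduced-mobile) edge of $T$ to have even weight, and then exploit acyclicity to propagate evenness to the half-edges. The paper's finish --- the subforest of odd edges has every vertex of even degree, hence no leaf, hence no edge --- is just a repackaging of your leaf-pruning induction.
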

\begin{proof}
Let $T$ be the boundary mobile associated with $O$ by the master bijection 
(Theorem~\ref{theo:master_boundary}). The parity conditions of $O$ imply that all edges and vertices of $T$
have even weight. In particular an edge $e$ of $T$ either has its two half-edges of odd weight, in which case $e$ is called \emph{odd}, or has its two half-edges of even weight, in which case $e$ is called \emph{even}. 
Let $F$ be the subforest of $T$ formed by the odd edges. Since every vertex of $T$ has even weight, it is incident to an even number of edges in $F$. Hence $F$ has no leaf, so that $F$ has no edge. Thus all edges of $T$ are even, and by the local rules of the master bijection it implies that all internal half-edges of $O$ have even weight. 
\end{proof}

\subsection{Regular orientations}
We now prove the existence of certain canonical orientations for bipartite maps of internal girth $2b$. This extends results proved in \cite{BeFu12} to maps with boundaries.

%Let $M$ be a map with boundaries, and let $f_s$ be a face. Recall that a $d$-angulation with boundaries, is a map such that every internal face has degree $d$.
Recall that a \emph{$d$-angulation with boundaries} is a map such that every internal face has degree $d$.
Let $M$ be a bipartite $2b$-angulation with boundaries, and let $f_s$ be a distinguished boundary face.
We call \emph{$b/(b-1)$-orientation of $(M,f_s)$} an $\al/\be$-orientation of $M$ where,
\begin{compactitem}
\item $\al(v)=b$ for every internal vertex, and $\be(e)=b-1$ for every internal edge,
\item $\al(f)=\deg(f)/2+b$ for every boundary face $f\neq f_s$, and $\al(f_s)=\deg(f_s)/2-b$.
\end{compactitem}
 
We say that a vertex $x$ of $M$ is \emph{$d$-blocked from the face $f_s$} if there is an internally-surrounded set of faces $S$ containing all the faces incident to $x$ but not $f_s$, and having contour-length $d$. 

\begin{lem} \label{lem:b-orient}
Let $M$ be a bipartite $2b$-angulation with boundaries, and let $f_s$ be a distinguished boundary face. 
If $M$ has $f_s$-internal girth at least $2b$, then there exists a $b/(b-1)$-orientation of $(M,f_s)$. 
Moreover, any such orientation is accessible from the vertices incident to $f_s$, and also from any vertex $v$ which is not $2b$-blocked from $f_s$.
\end{lem}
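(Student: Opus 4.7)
I would apply Lemma~\ref{lem:exists-alpha} to the functions $\alpha, \beta$ specified by the $b/(b-1)$-orientation: existence of such an orientation reduces to checking condition~(i) (the global sum equality) and condition~(ii) (the Hall-type inequalities), while accessibility reduces to the strict version~(iii). Condition~(i) is routine. Writing $\ell = \sum_{f\in B}\deg(f)$ for the total boundary length, the pairwise-vertex-disjointness of the boundaries yields $|V_\partial| = |E_\partial| = \ell$; the face-degree identity $2(|E|+|E_\partial|) = 2b|F_{\text{int}}| + \ell$ gives $|E| = b|F_{\text{int}}| - \ell/2$; and Euler's formula yields $|V| = 2 + |E| - |F_{\text{int}}| - |B|$. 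Substituting these into
\[
\sum_{x \in V \cup B} \alpha(x) \;=\; b|V| + \ell/2 + b(|B|-1) - b
\]
produces exactly $(b-1)|E|$, as required.

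\noindent\textbf{Conditions (ii) and (iii).} Given $X \subseteq V \cup B$, I would set $V_X = X \cap V$, $B_X = X \cap B$, $\widetilde X = V_X \cup V(B_X)$, and define $S(X)$ to be the smallest internally-enclosed set of faces containing $B_X$ together with every internal face whose vertices all lie in $\widetilde X$. Provided $f_s \notin B_X$ and $S(X) \neq \emptyset$, the $f_s$-\bgirth\ hypothesis gives that the contour-length of $S(X)$ is at least $2b$. I would then convert this geometric inequality into the Hall inequality $\sum_{x \in X} \alpha(x) \geq (b-1)|E_X|$ by a careful incidence count: internal edges of $E_X$ are split into those with both sides, one side, or no side lying in the internal-face part of $S(X)$; boundary edges of $B_X$ are split analogously according to whether the adjacent internal face lies in $S(X)$; and one uses the identity $\sum_{f \in S(X) \cap F_{\text{int}}} \deg(f) = 2b\,|S(X) \cap F_{\text{int}}|$ to close the count. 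The case $f_s \in X$ is handled by applying the previous case to the complement $X^c$ (which satisfies $f_s \notin X^c$) and using~(i) to transfer the result back. For accessibility, the same count yields strict inequality whenever $S(X)$ is either empty or has contour-length strictly greater than $2b$: this immediately gives accessibility from any vertex of $f_s$, and from any internal vertex $v$ not $2b$-blocked from $f_s$, since in the latter case any $X \not\ni v$ for which $S(X)$ contains all faces incident to $v$ must have contour-length strictly exceeding $2b$ by the non-$2b$-blocked hypothesis.

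\noindent\textbf{Main obstacle.} The technical heart of the argument is the edge-counting step above: one must correctly book-keep the additional internal faces absorbed when taking the internally-enclosed closure of the ``seed'' set $B_X \cup \{f \in F_{\text{int}} : V(f) \subseteq \widetilde X\}$, the boundary edges of $B_X$ (which belong to the contour of $S(X)$ but not to $E_X$), and the internal edges of $E_X$ whose endpoints lie in $\widetilde X$ but which are not incident to any face of $S(X)$. The bookkeeping must be arranged so that the contour-length bound translates term-by-term into the Hall inequality, in the same spirit as the proofs in~\cite{BeFu12}. Once the right incidence equation is written, the argument becomes largely formal; selecting the correct $S(X)$ and the correct incidence decomposition is where the substantive work lies.
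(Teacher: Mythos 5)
Your overall frame (reduce everything to Lemma~\ref{lem:exists-alpha} and check (i), (ii), (iii)) is the same as the paper's, and your verification of (i) is correct. The gap is in how you invoke the girth hypothesis for condition (ii): you apply it \emph{once}, to the single internally-enclosed set $S(X)$, and a single contour bound of $2b$ is quantitatively insufficient. To see this, consider the clean situation where $B_X=\emptyset$, $X$ is exactly the vertex set of $S(X)$ and $E_X$ is exactly its edge set; writing $\ell$ for the contour-length of $S(X)$ and $\chi$ for the Euler characteristic of the region it covers, the face--edge incidence relation $2b|S(X)|=2|E_X|-\ell$ and Euler's formula give
\[
\sum_{x\in X}\alpha(x)-(b-1)|E_X| \;=\; b\chi+\tfrac{\ell}{2},
\]
and $\chi=c-k$ where $c$ is the number of components of $S(X)$ and $k$ its number of holes. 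Your bound $\ell\geq 2b$ only yields $b(1+c-k)$, which is negative as soon as $S(X)$ has $c+2$ or more holes --- and such $X$ certainly exist (take $X$ to be all internal vertices except a few pairwise-distant ones). What is actually needed is a bound of $2b$ \emph{per hole}, i.e.\ per complementary region. This is exactly what the paper does: it forms the submap $M_X$ induced by $X$, observes that each non-boundary face $f$ of $M_X$ corresponds to an internally-enclosed set of faces of $M$ (the key point being that a boundary face lying inside such a region has none of its vertices on $M_X$, since boundaries are pairwise vertex-disjoint, so all its neighbours lie in the same region), and hence $\deg_{M_X}(f)\geq 2b$ for every such face except possibly the one containing $f_s$; summing these per-face bounds gives $2b|F_X|\leq 2|E_X|+|E_X'|-(2b-1)\ONE_{f_s\notin X}$ and then the Hall inequality. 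You should redirect the girth hypothesis from your ``inside'' set $S(X)$ to these complementary regions.

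Two further points. First, your reduction of the case $f_s\in X$ to the complement does not work: by (i), the deficiencies of $X$ and $X^c$ sum to $\sum\beta(e)$ over the edges with one endpoint on each side, so knowing that the deficiency of $X^c$ is nonnegative gives only the useless lower bound $-\mathrm{def}(X^c)$ for $\mathrm{def}(X)$. (In the paper's computation the case $f_s\in X$ is in fact the \emph{easier} one, since then $f_s$ is a face of $M_X$ and no exceptional face needs special treatment.) Second, your strictness criterion for (iii) is miscalibrated: accessibility from $v$ requires strict inequality for \emph{every} $X$ not containing $v$, including the many $X$ for which $S(X)$ is nonempty with contour exactly $2b$ but far from $v$; with the per-face bounds the correct statement is that equality forces every face of $M_X$ to have degree exactly $2b$, and in particular the face containing $v$ yields a $2b$-contour internally-enclosed set surrounding $v$, i.e.\ $v$ is $2b$-blocked. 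The paper actually proves this last accessibility claim by a direct argument on a given orientation (looking at the set of vertices admitting a directed path to $f_s$) rather than via (iii), but a (iii)-based argument along the lines just sketched is also viable.
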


\begin{proof} We want to use Lemma~\ref{lem:exists-alpha}. We start by checking Condition (i).
Let $V$, $E$, and $F$ be the sets of internal vertices, edges and faces respectively. 
Let $V'$, $E'$ and $B$ be the sets of boundary vertices, edges and faces respectively. 
By definition, 
$$\ds \sum_{x\in V\cup B}\alpha(x)-\sum_{e\in E}\beta(e)=b|V|+b(|B|-2)+\frac{1}{2}\sum_{f\in B}\deg(f)-(b-1)|E|.$$
Moreover, the Euler formula gives $|V|+|F|+|B|=2+|E|$ (because $|V'|=|E'|$), while the incidence relation between faces and edges gives $\sum_{f\in B}\deg(f)=|E'|$ and $2b|F|=2|E|+|E'|$. Using these identities gives $\ds \sum_{x\in V\cup B}\alpha(x)-\sum_{e\in E}\beta(e)=0$ as wanted.
We now check Condition (ii). Let $X\subseteq V\cup B$, let $V_X=V\cap X$, and let $B_X=B\cap X$. Let $V_X'$ and $E_X'$ be the sets of vertices and edges incident to faces in $B_X$ (so that $E_X\cup E_X'$ are the edges of $M$ with both endpoints in $V_X\cup V_X'$). Note that it is sufficient to check Condition (ii) for the subsets $X$ such that the graph $G_X:=(V_X\cup V_X',E_X\cup E_X')$ is connected. Indeed, the quantity $\sum_{x\in X}\alpha(x)-\sum_{e\in E_X}\beta(e)$ is additive over the connected components of $G_X$. So we now assume that $G_X$ is connected and consider the corresponding submap $M_X$ of $M$. 
By definition, $\ds \sum_{x\in X}\alpha(x)-\sum_{e\in E_X}\beta(e)=b|V_X|+b|B_X|-2b\ONE_{f_s\in X}+\frac{|E'_X|}{2}-(b-1)|E_X|$, where $\ONE_{f_s\in X}$ is 1 if $f_s\in X$ and $0$ otherwise. 
Let $F_X$ be the non-boundary faces of $M_X$. The Euler formula reads $|V_X|+|F_X|+|B_X|=2+|E_X|$, so
$$\ds \sum_{x\in X}\alpha(x)-\sum_{e\in E_X}\beta(e)=2b\ONE_{f_s\notin X} +\frac{|E'_X|}{2}+|E_X|-b|F_X|.$$
Now, any face $f\in F_X$ corresponds to a set of faces $S\subseteq F\cup B$ of $M$ which is internally-enclosed. Since $M$ has $f_s$-internal girth at least $2b$, this implies that the faces in $F_X$ have degree at least $2b$, except possibly for the face $f\in F_X$ containing $f_s$ (in the case $f_s\notin X$). Thus, the incidence relation gives $2b|F_X|\leq 2|E_X|+|E_X'| -(2b-1)\ONE_{f_s\notin X}$. Thus, 
$$\ds \sum_{x\in X}\alpha(x)-\sum_{e\in E_X}\beta(e)\geq \ONE_{f_s\notin X}.$$
This proves (ii) so there exists a $b/(b-1)$-orientation of $(M,f_s)$. Moreover (iii) is also true for any vertex incident to $f_s$, so that any $b/(b-1)$-orientation is accessible from the vertices incident to $f_s$.
 
Lastly, we consider a vertex $v$ such that a $b/(b-1)$-orientation $\Om$ of $(M,f_s)$ is not accessible from $v$ and want to show that $v$ is $2b$-blocked from $f_s$. Note that there is no directed path from $v$ to $f_s$ in $\Om$ (since $\Om$ is accessible from the vertices incident to $f_s$). 
Let $U$ be the set of vertices of $M$ from which there is a directed path toward $f_s$, and let $M'$ be the submap of $M$ made of $U$ and the edges with both endpoints in $U$. The vertex $v$ lies strictly inside a face $f$ of $M'$, and we consider the set $S$ of faces of $M$ corresponding to $f$. This is clearly an internally-enclosed set of faces of $M$ (since boundary faces are directed cycles). Moreover any edge of $M$ strictly inside $f$, having one endpoint on $f$, is oriented away from this vertex, so that the total weight $W_e$ of the edges strictly inside $f$ is equal to the total weight $W_v$ of the vertices strictly inside $f$. By combining the Euler relation and the incidence relation as above, the relation $W_e=W_v$ becomes $\deg(f)=2b$. 
%%%%%%%%%%%%%%%%%%%
% OB. Computation:
% We denote $V_f$ and $E_f$ and $F_f$ (resp. $V_f'$ and $E_f'$, $B_f$) the set of internal (resp. boundary) vertices and edges of $M$ lying strictly inside $f$.
% $W_v-W_e= b|V_f|+f|B_f|+|E'_f|/2-(b-1)|E_f|.
% Euler: $V+B+F-E=1$.
% Incidence: $b|F|=|E'|/2+|E|+\deg(f)/2$.
%%%%%%%%%%%%%%%%%%%
Thus, the internally-enclosed set $S$ has contour-length $2b$. Hence, $v$ is $2b$-blocked from $f_s$.
\end{proof}

For $M$ a map with boundaries, we call \emph{star map} of $M$, and denote $\siM$, the map obtained from $M$ by inserting a vertex $v_f$, called \emph{star-vertex}, in each internal face $f$, and joining $v_f$ by an edge to each corner of $f$. The star map $\siM$ is considered as a map with boundaries (same boundaries as $M$). A star map is shown in Figure~\ref{fig:star-map}. The vertices and edges of $\siM$ which are in $M$ are called $M$-vertices and $M$-edges, while the others are called star-vertices and star-edges.
If $M$ is bipartite and $f_s$ is a distinguished boundary, we call \emph{$b$-regular orientation of $(\siM,f_s)$} an $\al/\be$-orientation of $\siM$ where,
\begin{compactitem}
\item $\al(v)=b$ for every internal $M$-vertex, and  $\al(v)=\deg(v)/2+b$ for every star-vertex,
\item $\al(f)=\deg(f)/2+b$ for every boundary face $f\neq f_s$, and $\al(f_s)=\deg(f_s)/2-b$,
\item $\be(e)=b-1$ for every internal $M$-edge, and $\be(e)=1$ for every star-edge.
\end{compactitem}

\fig{width=.8\linewidth}{star-map}{A map with boundaries $M$, and the corresponding star map $\siM$ (with star-vertices colored black and $M$-vertices colored white).}

\begin{prop} \label{lem:b-regular-orient}
Let $M$ be a bipartite map with boundaries, and let $f_s$ be a distinguished boundary face. 
If $M$ has $f_s$-internal girth at least $2b$, then there exists a $b/(b-1)$-regular orientation of $(\siM,f_s)$. Moreover, any such orientation is accessible from the vertices incident to $f_s$, and also from any $M$-vertex $v$ which is not $2b$-blocked from $f_s$.
\end{prop}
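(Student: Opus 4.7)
The plan is to apply Lemma~\ref{lem:exists-alpha} to the star map $\siM$ (rather than to $M$), equipped with the $(\al,\be)$ data of a $b$-regular orientation, and to verify the three conditions (i), (ii), (iii) as well as the final accessibility statement. This is the same template as the proof of Lemma~\ref{lem:b-orient}, but with the complication that $\siM$ carries both $M$-edges (of weight $b-1$) and star-edges (of weight $1$).

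For condition (i) I would expand the difference $\sum_{x}\al(x)-\sum_{e}\be(e)$ over $\siM$ and collapse it using Euler's formula applied to $\siM$ together with the identities $|E_{\siM}|=|E|+\sum_{f\text{ int}}\deg(f)$ and $|F_{\siM}|=\sum_{f\text{ int}}\deg(f)$ (each internal face $f$ of $M$ contributes $\deg(f)$ star-edges and gets subdivided into $\deg(f)$ triangles). The computation mirrors the start of the proof of Lemma~\ref{lem:b-orient} and produces the balance identity.

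For condition (ii), given $X\subseteq V_{\siM}\cup B$, I would split $X=X_M\sqcup X_\star\sqcup X_B$ into its $M$-vertex part, star-vertex part and boundary part, and assume without loss of generality that the induced subgraph of $\siM$ is connected. The idea is to associate to $X$ the set of faces $S$ of $M$ formed by $X_B$ together with the internal faces $f$ of $M$ such that $v_f\in X_\star$, and to regroup $\sum_{x\in X}\al(x)-\sum_{e\in E_X}\be(e)$ as a sum over the internal faces of $M$ of a local contribution, plus an Euler-plus-incidence term on $S$ of the exact same form as the one bounded in Lemma~\ref{lem:b-orient}. The hypothesis that $M$ has $f_s$-internal girth at least $2b$ then supplies the bound $\deg(f)\geq 2b$ for every $f\in S$ (when $f_s\notin S$), which yields the desired inequality; it becomes strict whenever $f_s\notin X_B$, which is (iii) for a vertex incident to $f_s$. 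For the final accessibility statement concerning an $M$-vertex $v$ that is not $2b$-blocked from $f_s$, I would mimic the last paragraph of the proof of Lemma~\ref{lem:b-orient}: assuming $\Om$ is not accessible from $v$, take the set $U$ of vertices of $\siM$ from which $f_s$ is reachable, the submap $\siM'$ of $\siM$ on $U$, and the face $f$ of $\siM'$ containing $v$. Each edge strictly inside $f$ with an endpoint on the contour of $f$ is oriented away from $U$, so a weight balance inside $f$ (again Euler plus incidence) forces the set $S$ of faces of $M$ corresponding to $f$ to be internally-enclosed of contour length exactly $2b$, contradicting the hypothesis that $v$ is not $2b$-blocked.

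The main obstacle will be condition (ii), precisely because of the interplay between the two edge types of $\siM$ and because the inclusion of a star-vertex $v_f$ in $X_\star$ is a priori decoupled from the inclusion of the corners of $f$ in $X_M\cup V'_{X_B}$. The cleanest formulation I would aim for is to write the defect $\sum_{x\in X}\al(x)-\sum_{e\in E_X}\be(e)$ as a sum, over each internal face $f$ of $M$, of a local term depending on $[v_f\in X_\star]$ and on the number of corners of $f$ lying in $X_M\cup V'_{X_B}$, plus the global Euler-plus-girth term on $S$; the girth hypothesis then propagates face by face and closes the inequality.
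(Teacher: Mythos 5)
Your route is genuinely different from the paper's: you propose to verify the feasibility conditions of Lemma~\ref{lem:exists-alpha} directly on the star map $\siM$, whereas the paper never does this. Instead it reduces to Lemma~\ref{lem:b-orient} by \emph{filling} each internal face $f$ of $M$ with a chordless $2b$-angulation $Q_f$ of girth $2b$ joined to the corners of $f$ by paths of length $b-1$, applies Lemma~\ref{lem:b-orient} to the resulting $2b$-angulation $N$ (which still has $f_s$-internal girth $2b$), and then contracts each $Q_f$ to a star-vertex; the weight $\deg(f)/2+b$ at the star-vertex falls out of an Euler-plus-incidence count inside $Q_f$, and accessibility survives contraction. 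Your condition (i) does check out (I verified the balance identity), and your plan for the last accessibility claim is a reasonable transcription of the paper's argument.

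The gap is in condition (ii), exactly where you flag "the main obstacle," and I do not think your proposed fix closes it as stated. The set $S$ you attach to $X$ (boundaries in $X_B$ together with faces $f$ with $v_f\in X_\star$) is not what the Euler computation sees: after applying Euler's formula to the connected plane graph $G_X$, the term you must control is $b$ times the number of faces of $G_X$, and a face of $G_X$ is bounded by a \emph{mixture} of $M$-edges and star-edges. Such a face is a union of triangles of $\siM$, i.e.\ of \emph{pieces} of faces of $M$ (a face $f$ can have $v_f\in X_\star$ with only some of its corners in $X$, or all corners in $X$ with $v_f\notin X_\star$), so it does not correspond to an internally-enclosed set of faces of $M$, and the $f_s$-internal-girth hypothesis --- which bounds contour lengths of internally-enclosed sets of faces of $M$ only --- cannot be invoked "face by face" on $G_X$ without an extra argument converting mixed contours into genuine contours of face-sets of $M$ of controlled length. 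This is precisely the interaction between the two edge types that the paper's fill-and-contract construction is designed to avoid: in $N$ every short mixed cycle is forced to traverse paths of length $b-1$ and girth-$2b$ gadgets, so the girth hypothesis applies verbatim via Lemma~\ref{lem:b-orient}. Your approach may well be completable, but the decisive inequality is asserted ("the girth hypothesis then propagates face by face") rather than proved, so as written the proposal does not establish existence.
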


\fig{width=.8\linewidth}{angulate_f}{Filling a face of $M$ with a $2b$-angulation without reducing the $f_s$-internal girth (here $b=3$): first insert a $2b$-angulation $Q_f$ with one boundary $f'$ of length $\deg(f)$ inside $f$ (middle picture), and then connect each corner of $f$ to a distinct corner of $f'$ by a path of length $b-1$ (right picture). 
}

\begin{proof}
We start by defining an orientation of $\siM$, and we will show later that it has the desired properties.
We first construct a $2b$-angulation with boundaries $N$, by inserting a $2b$-angulation in each internal face of $M$ in the manner illustrated in Figure~\ref{fig:angulate_f}. 
Precisely, for each internal face $f\neq f_s$ of $M$, we do the following: 
\begin{compactitem}
\item We insert a $2b$-angulation $Q_f$ with one boundary $f'$ of length $\deg(f)$. The $2b$-angulation $Q_f$ is chosen to have girth $2b$,  and $Q_f$ is placed so that $f$ and $f'$ are facing each other as in Figure~\ref{fig:angulate_f}.
\item We join by an edge each corner $c$ of $f$ to a distinct corner of $f'$ by a path $P_c$ of length $b-1$ (without creating edge crossings).
\end{compactitem}
It is easy to see that $N$ has $f_s$-internal girth $2b$.  
Moreover, we can choose the maps $Q_f$ to be chordless (i.e., no 
inner edge of $Q_f$ joins two vertices on the outer contour
of $Q_f$). This easily ensures that any vertex of $M$ which is not $2b$-blocked from $f_s$ in $M$ is not $2b$-blocked from $f_s$ in $N$ (because no cycle of length at most $2b$ using some edges of $N\setminus M$ can surround a vertex of $M$). 
Since $N$ has $f_s$-internal girth $2b$, Lemma~\ref{lem:b-regular-orient} ensures the existence of a $b/(b-1)$-orientation $X$ of $N$. It is easy to see that, for each corner $c$ of $M$, the weight $w_c$ of the half-edge of the path $P_c$ incident to $c$ is either 0 or 1 (otherwise the weight of the $b-2$ vertices on the path $P_c$ cannot all be equal to $b$). We then define an orientation $Y$ of $\siM$ by replacing each quadrangulation $Q_f$ of $N$ by a star-vertex $v_f$, and replacing each path $P_c$ by a single edge $e_c$ from the corner $c$ to $v_f$ with weight $w_c$ on the half-edge incident to $c$ and $1-w_c$ on the half-edge incident to $v_f$.
We now prove that $Y$ is a $b$-regular orientation of $(\siM,f_s)$.
First, it is clear that the weight of every $M$-edge is $b-1$, and the weight of every star-edge is 1. Second it is clear that the weight of every $M$-vertex is $b$, and for every boundary face $f$ the
weight of the corresponding boundary    
is $\deg(f)/2+b$ (same as in $X$). Thus it only remains to check that the weight of each star-vertex $v_f$ is $\deg(f)/2+b$. Let $W_f$ be the total weight, in $X$, of the half-edges incident to $Q_f$ on the paths $P_c$. It is easy to see that the weight of $v_f$ in $Y$ is $W_f$ (because for any corner $c$ the half-edge of $P_c$ incident to $Q_f$ has weight $1-w_c$), hence we need to show that $W_f=b+\deg(f)/2$.
Let $v$ and $e$ be the number of vertices and edges of $Q_f$. The Euler relation together with the face-edge incidence relation imply that $bv=(b-1)e+b+\deg(f)/2$. In $X$ the total weight of vertices in $Q_f$ is $bv$ and the total weight coming from edges in $Q_f$ is $(b-1)e$. Hence, $W_f=bv-(b-1)e=b+\deg(f)/2$, as wanted. Thus, $Y$ is a $b$-regular orientation of $(\siM,f_s)$.

We know from Lemma~\ref{lem:b-regular-orient} that the orientation $X$ of $N$ is accessible from the vertices incident to $f_s$, and from any vertex which is not $2b$-blocked from $f_s$. Since the accessibility properties can only improve when contracting $X$ into $Y$, the orientation $Y$ of $\siM$ is also accessible from the vertices incident to $f_s$, and from any 
vertex of $M$ which is not $2b$-blocked from $f_s$.
\end{proof}

\subsection{Transfer lemma}
Let $N$ be a map and $\siN$ be the corresponding star map.
We say that an $\NN$-orientation of $\siN$ is \emph{transferable} if for any star-edge $\eps$ with an ingoing half-edge incident to an $N$-vertex $v$, the $N$-edge $e$ following $\eps$ in clockwise order around $v$ is oriented 1-way toward $v$. This property is illustrated in Figure~\ref{fig:transfer}(a).

%We say that an $\NN$-orientation of $\siN$ is \emph{transferable} if for any star-edge $e$ oriented from a star-vertex $u$ to a vertex $v$ of $N$, the $N$-edge $e'$ following $e$ in clockwise order around $v$ is also oriented toward $v$; see Figure~\ref{fig:transferable}(b).

\fig{width=\linewidth}{transfer}{(a) Condition for an orientation of $\siN$ to be \emph{transferable}: if the star-edge $\eps$ has an ingoing half-edge incident to a $N$-vertex $v$, then the following $N$-edge $e$ is oriented 1-way toward $v$. (b) Illustration of the transfer rule from the orientation $Y$ of $\siN$ to the orientation $Z$ of $N$. (c) Construction of the edges $e_1,e_2,e_3,\ldots$ forming a path directed toward $v$. (d) Proof of the uniqueness of the $\ZZ$-biorientation in $\wO_d$.}

\begin{lem}\label{lem:transfer}
Let $N$ be a plane map with boundaries having its outer face $f_0$ of degree $d$, and let $\siN$ be the corresponding star map.
Let $V$ be the set of internal $N$-vertices, let $E$ be the set of internal $N$-edges, and let $B$ be the set of boundaries of $\siN$. Let $V'$ be the set of star-vertices, and let $E'$ be the set of star-edges of $\siN$. Let $\alpha:V\cup V'\cup B\to \NN$, and let $\beta: E\cup E'\to \NN$, be functions such that 
there exists an $\al/\be$-orientation of $\siN$ which is accessible from the outer vertices of $N$. 

In the case where the outer face $f_0$ is a boundary face, we consider the star map $\siN$ as a plane map with outer face $f_0$. Suppose that $\al(f_0)=0$ and moreover the almost-minimal $\al/\be$-orientation of $\siN$ (which is unique by Lemma~\ref{lem:uniqueminimal}) is transferable. In this case, there exists a unique $\ZZ$-biorientation in $\wO_{-d}$ such that 
\begin{compactitem}
\item[(i)] %the weights of half-edges are in $\{-1\}\cup \NN$, and 
any edge $e\in E$ has weight $\be(e)$, and any vertex $v\in V$ has weight $\al(v)$, 
\item[(ii)] any boundary face $f\in B$ has weight $\al(f)$, and any internal face $f$ has weight $\ds\al(v_f)-\sum_{e\in E'\textrm{ incident to }v_f}\be(e)$.
\end{compactitem}
%where for $v\in V'$, one denotes $\delta(v):=\sum_{e\in E'\textrm{ incident to }v}\be(e)$.

In the case where the outer face $f_0$ is an internal face, we can consider $\siN$ as a plane map by choosing a face $\tilde{f}$ incident to the star-vertex $v_{f_0}$ to be the outer face of $\siN$. Suppose that $\ds \al(v_{f_0})=\sum_{e\in E'\textrm{ incident to }v_{f_0}}\be(e)$, and $\be(e)>0$ for every $N$-edge $e$ incident to $f_0$, and moreover the minimal $\al/\be$-orientation of $\siN$ (which is unique by Lemma~\ref{lem:uniqueminimal}) is transferable. In this case, there exists a unique $\ZZ$-biorientation of $N$ in $\wO_d$ satisfying the conditions (i-ii) above.
\end{lem}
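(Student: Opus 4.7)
The plan is to construct the required $\ZZ$-biorientation $Z$ of $N$ from the (almost-)minimal transferable $\alpha/\beta$-orientation $Y$ of $\siN$ by applying the local transfer rule of Figure~\ref{fig:transfer}(b), and then to verify all required properties by local arguments. The transfer rule acts corner-by-corner, absorbing the weights of the star-edges into the neighboring $N$-half-edges to produce $\ZZ$-weights from $\NN$-weights; the transferability hypothesis on $Y$ is precisely what ensures that, on each $N$-edge, the two resulting half-edge weights respect the $\ZZ$-biorientation sign convention (ingoing weight positive, outgoing weight non-positive). A short case analysis over the three types of $N$-edges (0-way, 1-way, 2-way in $Y$) and the local orientations of the adjacent star-edges should close this well-definedness check.

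Next, I would verify conditions~(i), (ii), and the boundary-edge consistency. Condition~(i) follows from the conservation built into the transfer rule: the star-edge contributions around each $N$-edge and around each internal vertex add up to $\beta(e)$ and $\alpha(v)$ respectively. Condition~(ii) for an internal face $f$ is a direct calculation: summing over the corners of $f$, the total outgoing weight of $N$-half-edges with $f$ on their right equals $\alpha(v_f)$ minus the total $\beta$-weight of star-edges incident to $v_f$, as required. The boundary-edge consistency --- 1-way with weights $(0,1)$ and the boundary face on the right --- follows from the hypothesis $\alpha(f_s)=0$ in the first case, and from the positivity of $\beta$ on $N$-edges incident to $f_0$ together with the weight-balance at $v_{f_0}$ in the second case.

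Then I would establish the (almost-)minimality and accessibility of $Z$. The crucial claim is that any ccw cycle $C$ of $Z$ in $N$ lifts to a ccw cycle of $Y$ in $\siN$: the transferability condition at the corners of $C$ on its interior side ensures that the orientations of the $N$-edges of $C$ are compatible between $Y$ and $Z$, and the star-vertices inside can be bypassed while preserving the ccw character. This lift transfers (almost-)minimality from $Y$ to $Z$, identifying in the first case the outer boundary of $N$ as the unique ccw cycle of $Z$. Conversely, any directed path in $Y$ from an outer vertex to an internal vertex of $N$ becomes a directed path in $Z$ after replacing each visit to a star-vertex $v_f$ by a detour along the contour of $f$, so accessibility transfers as well; together, these place $Z$ in $\wO_{\mp d}$.

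For uniqueness, the key observation is that any $\ZZ$-biorientation $Z' \in \wO_{\mp d}$ satisfying~(i)--(ii) must coincide with $Z$ on all boundary edges (by the definition of $\wO_{\mp d}$), on all internal edge weights and internal vertex weights (by~(i)), and on all face weights (by~(ii)). Two consistent $\ZZ$-biorientations of $N$ agreeing on these data differ only by a reversal of a set of edges supported on a union of simple cycles, and the (almost-)minimality condition rules this out (cf.\ Figure~\ref{fig:transfer}(d)). I expect the main obstacle to be the local analysis underlying both the well-definedness of the transfer rule and the lift of ccw cycles, which rely essentially on the transferability hypothesis and for which the sign bookkeeping in the case analysis must be carried out with care.
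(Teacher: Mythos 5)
Your construction of $Z$ by the transfer rule, the verification of conditions (i)--(ii), and the observation that minimality passes from $Y$ to $Z$ (a ccw cycle of $Z$ consists of $N$-edges whose orientation agrees with $Y$) all match the paper's proof. However, two of your steps have genuine gaps.

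First, accessibility. You propose to take a directed path of $Y$ from an outer vertex to $v$ and to ``replace each visit to a star-vertex $v_f$ by a detour along the contour of $f$''. This is not justified: if the path enters $v_f$ from $u$ and exits to $w$ via a star-edge ingoing at $w$, transferability only hands you a single $N$-edge oriented toward $w$ whose other endpoint $w'$ lies on the contour of $f$; nothing makes $w'$ reachable from $u$, and the contour of $f$ is not a directed cycle in general. The paper instead builds a path made of $N$-edges directly by a backward iteration from $v$: at each step it takes the first ingoing half-edge in counterclockwise order after the previously used outgoing half-edge, uses transferability to show this half-edge belongs to an $N$-edge, and shows that if the iteration never reaches an outer vertex it closes into a cw-cycle all of whose left-incident half-edges are outgoing, contradicting the accessibility of $Y$. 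Some argument of this kind is needed; the detour idea as stated does not close. (Relatedly, in the case where $f_0$ is internal, the requirement that every $N$-edge incident to $f_0$ be 2-way or 1-way with an inner face on its right also needs an accessibility-plus-minimality argument; positivity of $\be$ only rules out 0-way edges.)

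Second, and more seriously, uniqueness. You claim that two consistent $\ZZ$-biorientations agreeing on the data of (i)--(ii) ``differ only by a reversal of a set of edges supported on a union of simple cycles'' of $N$, so that minimality forces equality. This is false for weighted biorientations: conditions (i)--(ii) fix only the total weight of each edge, the total ingoing weight at each vertex, and the total outgoing weight along each face, not the individual half-edge weights. Two biorientations $Z\neq Z'$ satisfying (i)--(ii) can have the \emph{same} underlying orientation of $N$ (hence the same ccw cycles, so both can be minimal) and differ only in how the weights are split between half-edges; the residual freedom is a circulation on the star map $\siN$, not on $N$. This is exactly why the paper proves uniqueness by inverting the transfer rule, pulling $Z'$ back to a transferable $\al/\be$-orientation $Y'\neq Y$ of $\siN$, invoking Lemma~\ref{lem:uniqueminimal} to conclude that $Y'$ is accessible but not minimal, and then deriving a contradiction from an innermost ccw cycle of $Y'$: such a cycle must pass through a star-vertex, and transferability produces a strictly enclosed ccw cycle. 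Your uniqueness argument cannot be repaired without returning to $\siN$ in this way.
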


\begin{proof}
We treat the case where $f_0$ is an internal face; the case where $f_0$ is a boundary face is almost identical. Let $Y$ be the unique minimal $\al/\be$-orientation of $\siN$. We are assuming that the $\NN$-orientation $Y$ is accessible from the outer vertices of $N$, that $\ds \al(v_{f_0})=\sum_{e\in E'\textrm{ incident to }v_{f_0}}\be(e)$, that $\be(e)>0$ for every $N$-edge $e$ incident to $f_0$, and that $Y$ is transferable. We need to prove the existence of a unique $\ZZ$-biorientation of $N$ in $\wO_d$ satisfying the conditions (i-ii).

We first define a $\ZZ$-biorientation $Z$ of $N$ starting from $Y$. The biorientation $Z$ is obtained by keeping the orientation of the $N$-edges as in $Y$, and putting weights according to the following \emph{transfer rule} for every internal $N$-edge $e$:
\begin{compactitem}
\item If $e$ is is 0-way or 2-ways, then the weights of the half-edge of $e$ are kept as in the orientation $Y$.
\item If $e$ is oriented 1-way, we consider the star-edge $\eps$ preceding $e$ clockwise around the end $v$ of $e$. Denoting $x$ the weight of the half-edge of $\eps$ incident to $v$, we set the weight of the outgoing half-edge of $e$ to $-x$, and we add $x$ to the weight of the ingoing half-edge of of $e$.
\end{compactitem}
The transfer rule is illustrated in Figure~\ref{fig:transfer}(b).
Because $Y$ is a transferable consistent $\al/\be$-orientation of $\siN$, it is easy to see that $Z$ is a consistent $\ZZ$-biorientation satisfying conditions (i-ii). 
%% \begin{compactitem}
%% \item[(*)] put weights $-1$ and $2$ on the outgoing and ingoing half-edges of $e$ if $e$ is an internal edge such that the star-edge preceding $e$ clockwise around the end $v$ of $e$ is oriented toward $v$, and otherwise put weights $0$ and $1$. 
%% \end{compactitem}
%% Since $Y$ is a transferable consistent $\al$-orientation of $\siN$, it is easy to see that $Z$ is a consistent orientation of $N$ satisfying conditions (i-iii). 

We now want to prove that $Z$ is in $\wO_{d}$. Since $Z$ is consistent, this amounts to proving that $Z$ is in $\cO_{d}$.
First note that the minimality of $Y$ clearly implies the minimality of $Z$ (since any ccw-cycle of $Z$ would be a ccw-cycle of $Y$). Second, we prove that every $N$-edge $e$ incident to $f_0$ is either 2-way or 1-way with an inner face on its right in $Z$. Since $Z$ is a $\ZZ$-biorientation and $\beta(e)>0$ we already know that $e$ is either 2-way or 1-way. Suppose by contradiction that the edge $e=(u,v)$ is 1-way with $f_0$ on its right. By hypothesis, the orientation $Y$ is accessible from $v$, so there is a directed path $P$ from $v$ to $u$ in $Y$. The path $P$ does not pass through $v_{f_0}$ since the condition $\ds \al(v_{f_0})=\sum_{e\in E'\textrm{ incident to }v_{f_0}}\be(e)$ implies that the star-edges incident to $v_{f_0}$ are either 0-way or 1-way directed toward $v_{f_0}$. Thus, the cycle $P\cup\{e\}$ is a ccw-cycle of $Y$. This contradicts the minimality of $Y$. This completes the proof that every $N$-edge $e$ incident to $f_0$ is either 2-way or 1-way with an inner face on its right in $Z$. 
 %%%%%%%%%%%%%%%%%%%%%%%%%%%%%%%%%%%%%%%
 %%% UPDATED WITH ERIC'S ARGUMENT\\
 %%%%%%%%%%%%%%%%%%%%%%%%%%%%%%%%%%%%%%
Lastly, we need to prove that $Z$ is accessible from every outer vertex. It suffices to prove that for every internal $N$-vertex $v$ there is a directed path $P_v$ of $Y$ made of $N$-edges going from an outer vertex to $v$. 
Let $v$ be an inner vertex. Since $v$ is accessible from the outer vertices in $Y$, there exists an ingoing half-edge incident to $v$. Moreover, since $Y$ is transferable, there exists a ingoing half-edge $h_1$ incident to $e$ and belonging to an $N$-edge $e_1$. Let $h_1'$ be the other half-edge of $e_1$, and let $v_1$ be the endpoint of $h_1'$. If $v_1$ is an outer vertex, then we can take the path $P_v=(e_1)$. Otherwise, we consider the first ingoing half-edge $h_2$ of $\siN$ following $h_1'$ counterclockwise around $v_1$. This construction is illustrated in Figure~\ref{fig:transfer}(c). Because $Y$ is transferable, $h_2$ is part of an $N$-edge $e_2$. Let $h_2'$ be the other half-edge of $e_2$, and let $v_2$ be the endpoint of $h_2'$. If $v_2$ is an outer vertex, then we can take the path $P_v=(e_2,e_1)$. Otherwise we continue and define a sequence $N$-edges $e_3,e_4,\ldots$ and the $N$-vertices $v_3,v_4,\ldots$ according to the same process. We claim that there exists $i>0$ such that $v_i$ is an outer vertex, so that one can take the path $P_v=(e_i,e_{i-1},\ldots,e_1)$. Indeed, suppose by contradiction that this is not true. In this case, there exists $i<j$, with $e_i= e_j$ and we consider the directed cycle $C=(e_i,e_{i+1},\ldots,e_{j-1})$. Without loss of generality we can assume that $C$ is a simple cycle, and since $Y$ is minimal it is a cw-cycle. Hence, all the outer-vertices are in the part of $N$ on the left of $C$. However, by construction, all the half-edges of $\siN$ incident to the vertices of $C$ and on the left of $C$ are outgoing. This shows that the vertices of $C$ are not accessible from the outer vertices in $Y$. This is a contradiction. This concludes the proof that $Z$ is accessible from every outer vertex, hence that $Z$ is in $\wO_d$.

It only remains to prove that there is no $\ZZ$-biorientation $Z'\neq Z$ in $\wO_{d}$ satisfying (i-ii). Suppose, by contradiction, that $Z'$ is such an orientation. It is easy to see that there exists a transferable consistent $\al/\be$-orientation $Y'\neq Y$ of $\siN$ such that $Z'$ is obtained from $Y'$ by the transfer rule defined above. Since $Y$ and $Y'$ are both $\al/\be$-orientations, Lemma~\ref{lem:uniqueminimal} ensures that $Y'$ is accessible from the outer vertices of $N$ but is not minimal. Since $Y'$ is not minimal, there exists a ccw-cycle $C$ of $Y'$ which encloses no other ccw-cycles. Since $Z'$ is minimal, and the orientation of $N$-edges in $Y'$ and $Z'$ coincide, there must be a star-vertex $u$ on $C$. The star-edge $\eps$ following the vertex $u$ on $C$ has an ingoing half-edge incident to an $N$-vertex $v$. And since $Y'$ is transferable, the $N$-edge $e$ preceding $\eps$ around $v$ is oriented 1-way toward $v$ in $Y'$. Note that $e$ is enclosed by $C$; see Figure~\ref{fig:transfer}(d). Now consider a directed path $P$ in $Y'$ going from an outer vertex of $N$ to the origin of $e$. It is clear that $C\cup P\cup \{e\}$ contains a ccw-cycle enclosed in $C$. This contradicts our choice of $C$, and completes the proof.
\end{proof}

\subsection{Proof of Proposition~\ref{prop:Ddori}}
%% \begin{prop}\label{prop:Ddori}
%% Let $M$ be a map in $\cEd$ considered as a plane map by taking the outer face to be the marked boundary face $f_s$.
%% Then $M$ admits a $d/(d-2)$-orientation if and only if $M\in\cDd$. 
%% If $M\in\cDd$, then $M$ has a unique $d/(d-2)$-orientation 
%% in $\wO_{-d}$. We call it the \emph{canonical biorientation} of $M$. 
%% In the case of even $d$, $d=2b$, $M\in \cDd$ is bipartite if and only if all
%% weights on internal edges are even in the canonical 
%% biorientation of $M$. 
%% \end{prop}
In this subsection we prove Proposition~\ref{prop:Ddori}. In this entire subsection, $M$ is a map in $\cEd$, and $f_s$ is its marked boundary face. We start by showing the necessity of the internal girth condition.
\begin{claim}\label{claim:necessity-girth} 
%Let $M$ be a map in $\cEd$ and let $f_s$ be its marked boundary face. 
If $M$ has a $d/(d-2)$-orientation, then it has $f_s$-\bgirth\ at least $d$.
\end{claim}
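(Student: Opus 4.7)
The plan is a double-counting argument carried out inside the region $S$. Suppose $M$ has a $d/(d-2)$-orientation $\Omega$, and let $S$ be a non-empty internally-enclosed set of faces with $f_s\notin S$ and contour-length $c$; the aim is to show $c \geq d$. The argument applies cleanly when the sub-map $M_S$ induced by $S$ is connected and the complement of $S$ in the sphere is connected, and I would first reduce to this case: if $S$ is disconnected, the argument applied to each connected component suffices; if the complement of $S$ has several components, then choosing a minimum-contour internally-enclosed $S$ one verifies that the ``holes'' cannot reduce the contour without losing the internally-enclosed property, and one can work with the outer cycle bounding the component containing $f_s$.

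Introduce the following notations: $R^{\mathrm{int}}$ for the internal $M$-faces in $S$; $B_S$ for the boundaries of the boundary $M$-faces in $S$; $V_S^{\circ,i}$ for the internal $M$-vertices all of whose incident faces lie in $S$; and $E_S^{\mathrm{int},i}$ (resp.\ $E_S^{\mathrm{int},b}$) for the internal (resp.\ boundary) $M$-edges with both incident faces in $S$. The internally-enclosed hypothesis yields three useful book-keeping identities: the boundary $M$-vertices of $V_S^\circ$ are exactly the vertices lying on boundaries in $B_S$, so $|E_S^{\mathrm{int},b}| = \sum_{C \in B_S}\deg(f_C)$; every contour edge separates a face of $R^{\mathrm{int}}$ from a face outside $S$; and $\sum_{f \in R^{\mathrm{int}}} \deg(f) = 2|E_S^{\mathrm{int},i}| + |E_S^{\mathrm{int},b}| + c$.

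The central step is to compute the sum
\[
\Sigma := \sum_{v \in V_S^{\circ,i}} w(v) + \sum_{C \in B_S} w(C) + \sum_{f \in R^{\mathrm{int}}} w(f)
\]
in two different ways. On one hand, using the defining weights of $\Omega$ together with the identities above, one obtains $\Sigma = d\bigl(|V_S^{\circ,i}| + |B_S| + |R^{\mathrm{int}}|\bigr) - 2|E_S^{\mathrm{int},i}| - c$. On the other hand, viewing $\Sigma$ as a sum of half-edge weights and carrying out an edge-by-edge case analysis, one checks that each edge of $E_S^{\mathrm{int},i}$ contributes at most $d-2$ to $\Sigma$, while each edge of $E_S^{\mathrm{int},b}$ and each contour edge (internal or boundary) contributes at most $0$. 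The key reasons are that outgoing half-edge weights are non-positive, ingoing weights are at least $1$, and no ingoing half-edge at a contour vertex is counted in $\Sigma$ (since such vertices lie neither in $V_S^{\circ,i}$ nor on a boundary in $B_S$, and boundary half-edges on an element of $B_S$ are excluded from $w(C)$). This yields $\Sigma \leq (d-2)|E_S^{\mathrm{int},i}|$.

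Combining the two expressions gives $d\bigl(|V_S^{\circ,i}| + |B_S| + |R^{\mathrm{int}}|\bigr) \leq d|E_S^{\mathrm{int},i}| + c$. To conclude, I would apply Euler's formula to $M_S$: under the connectedness reductions this yields the identity $|V_S^{\circ,i}| + |B_S| + |R^{\mathrm{int}}| = |E_S^{\mathrm{int},i}| + 1$, and substituting gives $c \geq d$, as desired. The hardest parts of the proof are the edge-by-edge case analysis establishing the bound on $\Sigma$ (the sign of each contribution depends delicately on whether the adjacent vertices lie on the contour and on whether half-edges are ingoing or outgoing, and requires careful treatment of the boundary-edge cases) and the topological reduction to a case where the Euler identity takes its simplest form.
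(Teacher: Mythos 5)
Your proposal is correct and follows essentially the same route as the paper's own proof: a weight double-count over the internally-enclosed region $S$ (summing the prescribed vertex, boundary and internal-face weights, bounding the same quantity by $(d-2)$ times the number of internal edges inside $S$), combined with the face--edge incidence relation and the Euler relation for $S$. The only differences are presentational — you make the edge-by-edge half-edge bookkeeping and the reduction to a simply connected $S$ explicit, where the paper states the corresponding inequality and the connectedness reduction more tersely.
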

%1. Necessity of being in $\cDd$.

\begin{proof}
Suppose that $M$ has a $d/(d-2)$-orientation $\Om$. Let $S$ be a non-empty internally-enclosed set of faces not containing $f_s$. We want to show that the contour length $\ell$ of $S$ is at least $d$. We can therefore assume, without loss of generality, that the set $S$ of faces forms a connected region. Let $V$ and $E$ be respectively the sets of internal vertices and internal edges of $M$ incident only to faces in $S$.  Let $W$ be the total weight of edges in $E$ (for the $d/(d-2)$-orientation $\Om$ of $M$). By definition of $d/(d-2)$-orientations, 
$$(d-2)|E|=W\geq\sum_{v\in V}w(v)+\sum_{f\in S}w(f)= d|V|+d|S|-\sum_{f\in S,\textrm{ internal}}\deg(f)+ \sum_{f\in S,\textrm{ boundary}}\deg(f).$$
Moreover the incidence relation between edges and faces gives
$$\sum_{f\in S,\textrm{ internal}}\deg(f)=2|E|+ \ell+\sum_{f\in S,\textrm{ boundary}}\deg(f),$$
and the Euler relation gives $|V|-|E|+|S|=1$. Combining these relations gives $\ell\geq d$ as wanted.
\end{proof}

From now on, we assume that $M$ is in $\cDd$. We need to show the existence of a unique $d/(d-2)$-orientation of $M$. Let $N$ be the map with boundaries obtained from $M$ by inserting 3 vertices, called \emph{edge-vertices}, on each edge of $M$. For every face $f$ of $M$, we denote $f'$ the corresponding face of $N$. For every boundary $f$ of $M$, we consider $f'$ as a boundary of $N$. We also consider $f_s'$ as the outer face of $N$.
 Observe that $N$ is a bipartite map of $f_s'$-internal girth $4d$. Hence, by Proposition~\ref{lem:b-regular-orient}, there exists a $2d/(2d-1)$-regular orientation of  $(\siN,f_s')$. Moreover, any such orientation is accessible from the vertices incident to $f_s'$. By Lemma~\ref{lem:uniqueminimal} there exists a unique almost-minimal $2d/(2d-1)$-regular orientation $X$ of $(\siN,f_s')$. 

\begin{claim} \label{claim:transferable}
The orientation $X$ of $\siN$ is transferable. 
\end{claim}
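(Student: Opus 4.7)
The plan is to argue by contradiction, leveraging the almost-minimality of $X$ to forbid certain CCW cycles in $\siN$. Suppose $X$ is not transferable: there is a star-edge $\eps_0 = (v_f, v_0)$ ingoing at $v_0$ (with $v_f$ the star-vertex of some internal face $f$ of $N$) while the $N$-edge $e_0$ clockwise-next to $\eps_0$ at $v_0$ is not $1$-way toward $v_0$. Let $v_1$ be the other endpoint of $e_0$, and let $\eps_1 = (v_1, v_f)$ be the third edge of the triangular face $t_0$ of $\siN$ bounded by $\eps_0, e_0, \eps_1$. Traversing $\partial t_0$ counterclockwise as $v_f \to v_0 \to v_1 \to v_f$, the first two edges are compatible with this direction: $\eps_0$ by the ingoing hypothesis, and $e_0$ by the failure of being $1$-way toward $v_0$ (so $e_0$ is $1$-way forward or $2$-way). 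Since $t_0$ is a face of $\siN$ distinct from the outer boundary $f_s'$, almost-minimality rules out $\partial t_0$ being a CCW cycle, which forces $\eps_1$ to be ingoing at $v_1$.

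Iterate this dichotomy around $\partial f$: at each corner $c_i$ of $f$ at a vertex $v_i$, if the star-edge $\eps_i$ is ingoing at $v_i$ and the CW-next $N$-edge $e_i$ at $v_i$ is not $1$-way toward $v_i$, then the next corner $c_{i+1}$ at the CCW-neighbor $v_{i+1}$ on $\partial f$ automatically has $\eps_{i+1}$ ingoing. We thus produce a walk along $\partial f$ in the CCW direction around $f$, whose $N$-edges $e_i$ are all $1$-way forward or $2$-way, continuing as long as the CW-next $N$-edge at each visited corner is not $1$-way toward it (we call such an obstruction a \emph{wall}).

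Two outcomes arise. If the iteration returns to $c_0$ after a full trip around $\partial f$ without hitting a wall, then extracting a simple cycle from the resulting closed walk (splitting at pinch points of $\partial f$ if $\partial f$ is not simple) yields a simple CCW cycle of $X$ contained in $\partial f$; since $\partial f \neq \partial f_s'$, this contradicts almost-minimality. If instead the iteration hits a wall at some $v_k$, the edge $e_k$ is $1$-way toward $v_k$; since every boundary edge of $N$ has its incident boundary face on the right by the consistency convention, a boundary edge on $\partial f$ at $v_k$ would have to be $1$-way forward around $\partial f$, so $e_k$ must be an internal $N$-edge of weight $2d-1$. The ingoing contributions at $v_k$ from $\eps_k$ (weight $1$), from $e_k$ (weight $2d-1$), and from $e_{k-1}$ (positive, being forward or $2$-way at $v_k$) then sum to at least $2d+1$. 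This exceeds $\alpha(v_k) = 2d$ if $v_k$ is an internal $N$-vertex, and it exceeds $\alpha(\partial f_s') = 0$ if $v_k$ lies on $\partial f_s'$, so both of these subcases yield a contradiction.

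The hard part is the remaining subcase, where $v_k$ lies on an inner boundary $\partial f_b'$ with $f_b' \neq f_s'$, since there the per-vertex and outer-boundary weight bounds no longer give an immediate contradiction. The plan is to extend the directed walk from $v_k$ along the boundary edges of $\partial f_b'$ (which are all consistently directed forward by the consistency convention) until it either returns to $v_f$ via an outgoing star-edge at some vertex common to $\partial f$ and $\partial f_b'$, or else completes a full circuit of $\partial f_b'$; in either subcase, the resulting closed walk yields, after extraction of a simple sub-cycle, a CCW cycle of $X$ distinct from $\partial f_s'$, contradicting almost-minimality. Exploiting that $N$ is a subdivision of $M$, and so the boundaries of $N$ inherit simplicity and pairwise vertex-disjointness from those of $M$, should make this extension argument carry through cleanly.
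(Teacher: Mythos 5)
Your opening move --- the triangular face of $\siN$ bounded by $\eps_0$, $e_0$ and a second star-edge, together with almost-minimality forcing that second star-edge to be ingoing at $v_1$ --- is exactly the first step of the paper's proof, and your propagation around $\partial f$ combined with the weight count at a wall ($1$ from the star-edge, $2d-1$ from a $1$-way internal $N$-edge, plus a positive contribution from the previous contour edge, exceeding $\alpha(v_k)=2d$) is sound where it applies. The problem is that the one case you defer --- a wall at a vertex of an \emph{inner} boundary $\partial f_b'$ --- is precisely the case your method cannot close, and your sketched plan for it fails. The contour of $f_b'$ is oriented with $f_b'$ on its \emph{right}; since $f_b'$ is a bounded face (the outer face being $f_s'$) and its contour is simple, that contour is a \emph{cw}-cycle, so ``completing a full circuit of $\partial f_b'$'' contradicts nothing: almost-minimality forbids only ccw cycles. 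The other branch (an outgoing star-edge back to $v_f$ at a vertex of $\partial f\cap\partial f_b'$) is neither shown to exist, nor, granting a closed directed walk, is it shown that some extracted simple cycle has its interior on its \emph{left} --- which is exactly the delicate point. A weight count does not help either: the star-edge and the $1$-way internal edge contribute only $2d$ toward $\alpha(f_b')=\deg(f_b')/2+2d$, so there is no numerical contradiction there.

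A structural warning sign is that your argument nowhere uses that internal faces have degree in $\{d,d+1,d+2\}$, i.e.\ that every star-vertex has outdegree $\deg/2-2d\le 4$. The concluding remarks of the paper single out Claim~\ref{claim:transferable} as the place where this restriction is forced, and the paper's proof uses the outdegree bound in both of its cases: when $v$ is an $M$-vertex it propagates along the four edges of one subdivided $M$-edge to produce five outgoing star-edges at the star-vertex; when $v$ is an edge-vertex it uses the bound to locate an ingoing star-edge on the adjacent $M$-edge of $f$ and then assembles a ccw cycle of star-edges encircling an $M$-vertex. Your tangential walk around $\partial f$ is a genuinely different propagation, but without the outdegree bound (or an argument of comparable strength replacing the paper's second case) the inner-boundary wall configuration remains unexcluded, so the proof is incomplete. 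Two smaller points: in your ``full trip'' outcome, extracting a ccw (rather than cw) simple cycle from a possibly pinched contour $\partial f$ needs justification, though that outcome is anyway impossible for the simpler reason that $v_f$ would then have indegree $0$ while $\alpha(v_f)=\deg(v_f)/2+2d>0$; and your wall count tacitly assumes $e_{k-1}\ne e_k$.
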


\begin{proof}
We consider an edge $e$ of $\siN$ oriented from a star-vertex $u$ to an $N$-vertex $v$, and consider the $N$-edge $e'=\{w,v\}$ following $e$ in clockwise direction around $v$. We want to show that $e'$ is oriented 1-way toward $v$. In Figures~\ref{fig:transferable-gle1} and~\ref{fig:transferable-gle2}, we suppose by contradiction that the half-edge of $e'$ incident to $w$ is ingoing. 

Let us suppose first that $v$ is a vertex of $M$, as in Figure~\ref{fig:transferable-gle1}. By definition, the indegree of $u$ is $2d+\deg(u)/2$, and $\deg(u)\in\{4d,4(d+1),4(d+2)\}$, so the outdegree of $u$ is at most 4. Now, observe that the star-edge $\{u,w\}$ must be oriented toward $w$ to avoid creating a ccw-cycle. Since $w$ is either a boundary vertex or a vertex of weight $2d$, the $N$-edge $e''\neq e'$ incident to $w$ is not oriented 1-way toward $w$. Iterating this reasoning three times shows that $u$ has outdegree at least 5, which is a contradiction. 
%However this is impossible since by definition the outdegree of $u$ is $\deg(u)/2-2d$, and $\deg(u)\in\{4d,4(d+1),4(d+2)\}$. We reach a contradiction.

\fig{width=.9\linewidth}{transferable-gle1}{Proof that $M$ is transferable: case where $v$ is a vertex of $M$.}
\fig{width=\linewidth}{transferable-gle2}{Proof that $M$ is transferable: case where $v$ is an edge-vertex of $N$.}

We now suppose that $v$ is an edge-vertex of $N$, as in Figure~\ref{fig:transferable-gle2}. Let $f$ be the face of $N$ containing $u$, let $a$ be the edge of $M$ on which $v$ lies, let $b$ be the edge of $M$ preceding $a$ clockwise around $f$, and let $t$ be the vertex of $M$ preceding $v$ clockwise around $f$. 
Let $P$ be the path of $N$ made of the edges between $v$ and $t$ on $a$. Reasoning as above we conclude that $P$ is a directed path from $v$ to $t$ (its edges are either 2-ways or 1-way in the direction of $P$) and the star-edges between $P$ and $u$ are oriented toward $P$.
We now consider the first star-edge $\eps_1$ oriented toward $u$ following $e$ in counterclockwise order around $u$, and we denote $v_1$ its origin. 
Since $v_1$ is not on $P$ and the star-vertex $u$ has outdegree at most 4, $v_1$ must be an edge-vertex on $b$; see Figure~\ref{fig:transferable-gle2}. Moreover, the $N$-edge preceding $\eps_1$ in clockwise order around $v_1$ must be oriented 1-way away from $v_1$ (to avoid a ccw-cycle). This implies that $v_1$ is internal, and that the other star-edge $e_1$ incident to $v_1$ must be oriented toward $v_1$ (because the weight of $v_1$ is $2d$). At this point we can apply to $e_1$ the reasoning we just applied to $e$. Iterating this process proves the existence of some edges $\eps_1,e_1,\ldots,\eps_{\deg(t)},e_{\deg(t)}$, forming a ccw-cycle around $t$; see Figure~\ref{fig:transferable-gle2}. We again reach a contradiction, which completes the proof.
\end{proof}

We can now conclude the proof of Proposition~\ref{prop:Ddori}. We apply the transfer Lemma~\ref{lem:transfer} to $X$ (with the outer face $f_0$ being the marked boundary face $f_s'$ of $N$). It implies the existence of a unique $\ZZ$-orientation $Y$ of $N$ which is in $\wO_{-4d}$ and such that 
\begin{compactitem}
\item[(i)] any internal edge has weight $2d-1$, and any internal vertex $v$ has weight $2d$, 
\item[(ii)] $f_s'$ has weight 0, any boundary face $f'\neq f_s'$ of $N$ has weight $2d+\deg(f')/2=2d+2\deg(f)$,  
\item[(iii)] any internal face $f'$ of $N$  has weight $2d-\deg(f')/2=2d-2\deg(f)$.
\end{compactitem}
We now use the rules indicated in Figure~\ref{fig:fromYtoZ} in order to obtain from $Y$, an orientation $Z$ of $M$. It is easy to check that the orientation $Z$ of $M$ is in $\wO_{-d}$ (indeed the almost-minimality and accessibility are clearly preserved from $Y$ to $Z$) and such that 
\begin{compactitem}
\item[(i')] any internal edge has weight $2d-4$, and any internal vertex $v$ has weight $2d$, 
\item[(ii')] $f_s$ has weight 0, any boundary face $f\neq f_s$ of $M$ has weight $2d+2\deg(f)$,
\item[(iii')] any internal face $f$ of $M$ has weight $2d-2\deg(f)$.
\end{compactitem}
Moreover, by Lemma~\ref{lem:parity}, the weight of internal half-edges of the orientation $Z$ are all even. Upon dividing by 2 the weight of every internal half-edge, one obtains a $d/(d-2)$-orientation $\Om$ of $M$ in $\wO_{-d}$.

\fig{width=.8\linewidth}{fromYtoZ}{Possible weights at a subdivided edge of $M$ for the $2d/(2d-1)$-orientation $Y$, and the associated orientation $Z$ of $M$. We use the notation $b=2d$, $b'=2d-1$, and $b''=2d-2$. Case (a-e) correspond to internal edges. In (a) the weights $w_1,w_2$ of the half-edges incident to the vertices of $M$ are both non-negative (in this case, $w_1+w_2=4(b-1)-3b=2d-4$ and the other weights are determined uniquely from $w_1,w_2$). In (b-e) we assume $w_1=-1$ and consider the possible sequences of weights.}

We now argue that $\Om$ is the unique $d/(d-2)$-orientation of $M$ in $\wO_{-d}$. Indeed, suppose by contradiction that there is another $d/(d-2)$-orientation $\Om'$ in $\wO_{-d}$. By doubling the weight of every internal half-edge, one obtains an orientation $Z'\neq Z$ satisfying the conditions (i'), (ii'), (iii') above, and upon inverting the rule represented in Figure~\ref{fig:fromYtoZ}, one gets an orientation $Y'\neq Y$ of $N$ in $\wO_{-4d}$ satisfying conditions (i), (ii), (iii) above. By inverting the ``transfer rule'', one would get from it an almost-minimal $2d/(2d-1)$-regular orientation $X'\neq X$ of $(\siN,f_s')$. This contradicts the uniqueness of the  almost-minimal $2d/(2d-1)$-regular orientation of $(\siN,f_s')$. This concludes the proof of the existence and uniqueness of a $d/(d-2)$-orientation $\Om$ of $M$ in $\wO_{-d}$. 

The additional statement about the parity of the weights in the case where $d$ is even is a direct consequence of Lemma~\ref{lem:parity}, hence the proof of Proposition~\ref{prop:Ddori} is complete.

\subsection{Proof of Proposition~\ref{prop:Adaori}}
%% \begin{prop}\label{prop:Adaori}
%% Let $M$ be a map in $\cGda$, let $f_s$ be its marked boundary face, and let $f_e$ be its marked internal face. We consider $M$ as a plane map by taking $f_e$ to be the outer face.
%% The map $M$ admits a $d/(d-2)$-orientation if and only if $M$ has $f_s$-\bgirth\ at least $d$ (i.e. $M$ is in $\cAda$). 
%% Moreover, if $M$ is in $\cAda$, then $M$ has a $d/(d-2)$-orientation in $\wO_{d}$ if and only if $M$ is reduced (i.e. $M$ is in $\cBda$). In this case, the $d/(d-2)$-orientation in $\wO_{d}$ is unique. We call it the \emph{canonical biorientation} of $M$. 
%% Lastly, in the case where $d$ and $a$ are both even, a map $M\in \cBda$ is bipartite if and only if all the internal half-edges of $M$ have even weight in its canonical biorientation. 
%% \end{prop}
%In this subsection we prove Proposition~\ref{prop:Adaori}. 
In this entire subsection, $M$ is a map in $\cGda$, $f_s$ is its marked boundary face, and $f_e$ is its marked internal face $f_e$. We consider $M$ as a plane map by taking $f_e$ to be the outer face of $M$.

First, we observe that the following claim about the \bgirth\ condition has the exact same proof as Claim~\ref{claim:necessity-girth}.
\begin{claim}
If $M$ has a $d/(d-2)$-orientation, then it has $f_s$-\bgirth\ at least $d$.
\end{claim}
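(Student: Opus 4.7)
The plan is to adapt verbatim the counting argument used in the proof of Claim~\ref{claim:necessity-girth}, noting that the key inequality only involves faces in the chosen set $S$ together with the internal vertices and edges lying strictly inside $S$, so the change in the weight convention for $f_s$ (now $\alpha(f_s)=-d+\deg(f_s)$ rather than $0$) plays no role because $f_s\notin S$ by hypothesis. The only real content is a careful bookkeeping of weights versus degrees, combined with Euler's relation for the submap enclosed by $S$.

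More concretely, I would start from an assumed $d/(d-2)$-orientation $\Omega$ of $M$ and pick a non-empty internally-enclosed set of faces $S$ with $f_s\notin S$; the goal is to prove that the contour-length $\ell$ of $S$ is at least $d$. Without loss of generality $S$ is connected (otherwise apply the argument to a connected component, which is itself internally-enclosed). Let $V$ and $E$ be respectively the internal vertices and internal edges of $M$ incident only to faces in $S$. Writing $W$ for the total weight of the edges in $E$, the definition of a $d/(d-2)$-orientation gives
\[
(d-2)|E|=W\geq \sum_{v\in V} w(v)+\sum_{f\in S} w(f) = d|V|+d|S|-\!\!\sum_{f\in S\text{ internal}}\!\!\deg(f)+\!\!\sum_{f\in S\text{ boundary}}\!\!\deg(f),
\]
using $w(v)=d$, $w(f)=d-\deg(f)$ for internal $f$, and $w(f)=d+\deg(f)$ for boundary $f\in S$ (here the hypothesis $f_s\notin S$ is used to avoid the exceptional weight $-d+\deg(f_s)$). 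The inequality is because internal vertices of $V$ and faces of $S$ may receive additional ingoing weight from half-edges outside of $E$. Then the face-edge incidence relation reads
\[
\sum_{f\in S\text{ internal}}\deg(f)=2|E|+\ell+\sum_{f\in S\text{ boundary}}\deg(f),
\]
and the Euler relation applied to the subregion bounded by $S$ gives $|V|-|E|+|S|=1$. Substituting these into the first inequality and simplifying yields $\ell\geq d$, establishing that the $f_s$-internal girth of $M$ is at least $d$.

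Since this is merely a transcription of the earlier argument, I do not anticipate any real obstacle; the one thing I would pay attention to is the bookkeeping step where a boundary face $f\in S$ contributes $+\deg(f)$ both to the vertex/face weight sum and (through its contour edges lying outside $E$) to the incidence relation, so the two contributions must cancel correctly. As long as $f_s\notin S$, the weight convention exceptional to $f_s$ does not enter, and the proof goes through unchanged.
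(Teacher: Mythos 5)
Your proposal is correct and follows exactly the route the paper takes: the paper itself disposes of this claim by remarking that it ``has the exact same proof as Claim~\ref{claim:necessity-girth}'', and your transcription of that counting argument (weight inequality over the internally-enclosed set $S$, face--edge incidence relation, Euler relation), together with the observation that the exceptional weight of $f_s$ never enters because $f_s\notin S$, is precisely that proof.
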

From now on we assume that $M$ has $f_s$-\bgirth\ at least $d$ (i.e. is in $\cAda$).
Next we prove the necessity of $M$ being reduced.  
\begin{claim}\label{claim:necessity-reduced} 
If $M$ has a $d/(d-2)$-orientation in $\wO_{d}$, then $M$ is reduced.
\end{claim}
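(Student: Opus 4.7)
The plan is to suppose, for contradiction, that $M$ admits both a $d/(d-2)$-orientation $\Omega\in\wO_d$ and a blocked region $S$ of contour length $d$ with $|S|\geq 2$, and to derive an impossibility. I will first handle the case where $\partial S$ is a single simple cycle $C$ of length $d$ (so that the closure $\overline S$ is a topological disk); since $|S|\geq 2$, the contour $\partial f_e$ then lies strictly inside $S$, not on $C$.

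The key weight-accounting step proceeds as follows. Let $E$ denote the internal edges of $M$ with both adjacent faces in $S$, and let $V$, $V_{B,S}$, $V_C$ be respectively the internal vertices strictly inside $S$, the boundary vertices strictly inside $S$, and the vertices on $C$. I expand $(d-2)|E|$ as the sum of the ingoing and outgoing half-edge weights of the $E$-edges, using the prescribed values $w(v)=d$, $W(C')=d+\deg C'$, and $w(f)=d-\deg f$ of the $d/(d-2)$-orientation; the incidence identity $\sum_{f\in F_{\mathrm{int}}^S}\deg f = 2|E|+B_S+d$ (where $B_S$ is the total boundary length in $S$); and the Euler identity $|V|+|S|-|E|=1$ for the disk $\overline S$. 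The outcome is the identity $X=Y$, where $X\geq 0$ collects the ingoing half-edge weights of $E$-edges at $V_C$ and $Y\leq 0$ collects the outgoing half-edge weights on $C$-edges whose right face lies in $S$. The sign constraints then force $X=Y=0$; in particular, \emph{no $E$-edge has an ingoing half-edge at a vertex of $C$}.

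The accessibility step converts this into a contradiction. Because $\Omega\in\wO_d$, every vertex of $M$ is accessible from every outer vertex, that is, from every vertex of $\partial f_e$. Fix any $v\in C$ and a directed path $v_0\to v_1\to\cdots\to v_k=v$ with $v_0\in\partial f_e$, and let $j$ be the smallest index with $v_j\in C$. The vertices $v_0,\ldots,v_{j-1}$ must all lie strictly inside $S$, since a single-edge jump from $S$'s strict interior to $T$'s strict interior would require that edge to have its two adjacent faces simultaneously in $S$ (as seen at its $S$-interior endpoint) and in $T$ (as seen at its $T$-interior endpoint). Consequently $e_j$ has both adjacent faces in $S$, so either $e_j\in E$ (forcing an ingoing half-edge at $v_j\in C$ and contradicting $X=0$) or $e_j$ is a boundary edge on some boundary face $f'\in S$ (in which case the internally-enclosed property of $S$ forbids the vertices of $f'$ to lie on $C$, contradicting $v_j\in C$).

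The main obstacle I expect is the reduction to the simply-connected case. When $M\setminus S$ has $b\geq 2$ connected components $T_1,\ldots,T_b$ with $f_s\in T_{i_0}$, redoing the weight accounting on $\overline S$ yields only the weaker identity $X-Y=d(b-1)$, which no longer forces $X=0$. To address this, I would apply the argument instead to the enlargement $\widetilde S:=M\setminus T_{i_0}$, after iteratively adjoining to $\widetilde S$ any face that shares a vertex with one of its boundary faces so as to restore the internally-enclosed property, while checking that this enlargement process never reaches $f_s$ and preserves contour length equal to $d$ using the $f_s$-internal-girth hypothesis and the closure-under-union property of blocked regions of contour length $d$ established earlier in the section.
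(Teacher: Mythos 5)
Your overall strategy --- an Euler/incidence weight count over the blocked region $S$ followed by an accessibility argument from the vertices of the outer face $f_e$ --- is the same as the paper's, and the weight-counting step is essentially right. The genuine gap is the assertion that $|S|\geq 2$ forces $\partial f_e$ to lie strictly inside $S$, disjoint from $C$. Since $f_e$ is an \emph{internal} face, the internally-enclosed property of $S$ says nothing about the faces meeting $f_e$, and $\partial f_e$ can meet $C$ while $S\supsetneq\{f_e\}$: for $d=3$ take $C=(p,q,r)$ with an interior vertex $x$ joined to $p,q,r$ and $f_e=pqx$ (an internally-enclosed region with $|S|=3$, contour length $3$, and two vertices of $\partial f_e$ on $C$). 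In such a situation your chosen path may start with $v_0\in C$, so $j=0$, there is no edge $e_j$, and no contradiction results. In this particular example one could repair the argument by starting the path at the interior vertex $x$, but you give no reason why $\partial f_e$ must have \emph{any} vertex strictly inside $S$, and ruling out the configuration where all vertices of $\partial f_e$ lie on $C$ is precisely the delicate point. The paper closes it as follows: accessibility shows every vertex of $f_e$ lies on the contour of $S$, and then the $\cO_d$-condition that every outer edge is $2$-way or $1$-way with an inner face on its right, combined with your own conclusion $X=0$ (no $E$-edge is $2$-way at, or $1$-way into, a contour vertex) and the fact that boundary faces of $S$ cannot touch the contour, forces every edge of $\partial f_e$ to separate $f_e$ from a face outside $S$; since the contour of $S$ has exactly $d$ edges, it coincides with $\partial f_e$ and $S=\{f_e\}$. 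Some version of this final step is indispensable.

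By contrast, the ``main obstacle'' you single out is not really one. If $M\setminus S$ had a connected component $T$ not containing $f_s$, then $S\cup T$ would again be internally-enclosed (a face sharing a vertex with a boundary face of $T$ lies in $T\cup S$, and $S$ is already internally-enclosed) and would have contour length strictly smaller than $d$, contradicting the standing hypothesis that the $f_s$-internal girth is at least $d$. So the complement of $S$ is automatically connected and $\overline{S}$ is a disk; your proposed enlargement is the right idea, but it can be carried out in one line rather than left as an unresolved difficulty, and it is not where the proof actually needs work.
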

\begin{proof}
Let $M$ be a map in $\cAda$ admitting a $d/(d-2)$-orientation $\Om$ in $\wO_{d}$. We denote by $w(x)$ the weight of a vertex or boundary $x$ in $\Om$. Let $S$ be a blocked region of contour length $d$. We need to show $S=\{f_e\}$. Let $V$ and $E$ be respectively the set of inner vertices, and inner edges of $M$ incident only to faces in $S$. Let $U$ be the set of vertices which are incident to both a face in $S$ and a face not in $S$, and let $H$ is the set of half-edges which are part of an edge in $E$ and whose incident vertex is in $U$. By definition of $d/(d-2)$-orientations, 
$$(d-2)|E|\geq \sum_{h\in H,w(h)> 0} w(h)+\sum_{v\in V}w(v)+\sum_{f\in S}w(f) = d|V|+d|F|-\sum_{f\in S,\textrm{ internal}}\deg(f)+ \sum_{f\in S,\textrm{ boundary}}\deg(f).$$
Combining this relation with the incidence relation between faces and edges ($\ds \sum_{f\in S,\textrm{ internal}}\deg(f)=2|E|+ d+\sum_{f\in S,\textrm{ boundary}}\deg(f)$) and the Euler relation ($|V|-|E|+|S|=1$) gives $\sum_{h\in H,w(h)> 0} w(h)\leq 0$. Hence, the weights of half-edges in $H$ are non-positive. Equivalently no edge of $E$ is oriented 1-way toward a vertex in $U$ or oriented 2-way with a vertex in $U$. Moreover, since $S$ is internally-enclosed, there cannot be any boundary edge incident only to faces of $S$ and oriented toward a vertex of $U$ either. This implies that no vertex of $V$ can reach a vertex of $U$. Since $\Om\in\wO_{d}$ is accessible from the vertices incident to $f_e$, it must be that one of the vertices incident to $f_e$ is in $U$. Moreover, since $\Om\in\wO_{d}$ all the edges incident to $f_e$ are 2-ways or 1-way with an inner face on its right. This implies that the edges on the contour of $S$ are incident to $f_e$. Thus $S=\{f_e\}$.
\end{proof}

From now on we assume that $M$ is reduced (i.e. is in $\cBda$). It remains to prove that $M$ has a unique $d/(d-2)$-orientation in $\wO_{d}$. The proof is similar to the proof of Proposition~\ref{prop:Ddori}. Let $N$ be the map with boundaries obtained from $M$ by inserting 3 vertices, called \emph{edge-vertices}, on each edge of $M$. For every face $f$ of $M$, we denote $f'$ the corresponding face of $N$. For every boundary $f$ of $M$, we consider $f'$ as a boundary of $N$. We also consider $f_e'$ as the outer face of $N$.
Observe that $N$ is a bipartite map of $f_s'$-internal girth at least $4d$.
Hence by Proposition~\ref{lem:b-regular-orient} there exists a $2d/(2d-1)$-regular orientation of the star map $(\siN,f_s')$. By Lemma~\ref{lem:uniqueminimal} there exists a unique minimal $2d/(2d-1)$-regular orientation $X$ of $(\siN,f_s')$.  Moreover, since $M$ is reduced, the vertices incident to $f_e'$ are not $4d$-blocked from $f_s$. Thus (by the second claim of Proposition~\ref{lem:b-regular-orient}), $X$ is accessible from the vertices incident to $f_e'$. We now want to apply the transfer Lemma~\ref{lem:transfer} to $X$ (with outer face the internal face $f_0=f_e'$). 
First, with the notation of Lemma~\ref{lem:transfer}, we have $\be(e)=1$ for every star-edge $e$ and $\al(v_{f_e'})=\deg(f_e')/2+2d=4d=\sum_{e\in E'\textrm{ incident to }v_{f_e'}}\be(e)$. Moreover, $X$ is shown to be transferable by the same proof as for Claim~\ref{claim:transferable}. Thus we can apply Lemma~\ref{lem:transfer}, which implies the existence of a unique $\ZZ$-orientation $Y$ of $N$ which is in $\wO_{4d}$ and such that 
\begin{compactitem}
\item[(i)] any internal edge has weight $2d-1$, and any internal vertex $v$ has weight $2d$, 
\item[(ii)] $f_s'$ has weight $2a-2d$, any boundary face $f'\neq f_s'$ of $N$ has weight $2d+\deg(f')/2$, 
\item[(iii)] any internal face $f'$ of $N$ has weight $2d-\deg(f')/2$.
\end{compactitem}
We now use the rules indicated in Figure~\ref{fig:fromYtoZ} in order to obtain from $Y$, an orientation $Z$ of $M$. It is easy to check that the orientation $Z$ of $M$ is in $\wO_{d}$ (indeed the minimality and accessibility are preserved from $Y$ to $Z$) and such that 
\begin{compactitem}
\item[(i')] any internal edge has weight $2d-4$, and any internal vertex $v$ has weight $2d$, 
\item[(ii')] $f_s$ has weight $2a-2d$, any boundary face $f\neq f_s$ of $M$ has weight $2d+2\deg(f)$, 
\item[(iii')] any internal face $f$ of $M$ has weight $2d-2\deg(f)$.
\end{compactitem}
Moreover, by Lemma~\ref{lem:parity}, the weight of internal half-edges of the orientation $Z$ are all even. Upon dividing by 2 the weight of every internal half-edge, one obtains a $d/(d-2)$-orientation $\Om$ of $M$ in $\wO_{d}$. 

The uniqueness of the $d/(d-2)$-orientation of $M$ in $\wO_{d}$ is proved in the same way as for Proposition~\ref{prop:Ddori}, and the additional statement about the parity of the weight in the case where $d,a$ are even follows from Lemma~\ref{lem:parity}. This completes the proof of Proposition~\ref{prop:Adaori}.

\section{Concluding remarks}\label{sec:extension}
In this article we gave a bijective proof to all the known enumerative results for maps with boundaries (such as Krikun's formula for triangulations with boundaries \eqref{eq:krikun_triang}), and also established new enumerative results (such as the formula for quadrangulations with boundaries \eqref{eq:krikun_quad}). 

However, in view of the results established in~\cite{BeFu12,BeFu12b,BeFu13},  one could have hoped to obtain enumerative results for slightly more general, or more natural, classes of maps with boundaries. Ideally one could hope to enumerate any class of maps with boundaries with control on the girth and on the degrees of both boundary and internal faces. Alas, the girth parameter we are able to control through our bijections, the \bgirth\, is a bit unnatural for maps with several boundaries (because it depends on a choice of a marked boundary). Second, for maps of \bgirth\ $d$ we could only allow internal faces degrees in $\{d,d+1,d+2\}$. These limitations were dictated by the proofs of our master bijection approach (existence of canonical orientations, and in particular Claim \ref{claim:transferable}), and we do not think that further improvements are possible in this direction. 

Looking at other approaches, we remark that one can count certain classes of maps with boundaries, with a control on a different girth parameter. Indeed, let us define the \emph{contractible girth} of a plane map with boundaries, as the smallest length of a simple cycle enclosing a region with no boundary face (note that \emph{girth} $\leq$ \emph{blocking girth} $\leq$ \emph{contractible girth}). It is not hard to see that using a generating function approach (by substitution) would allow one to compute the generating function of triangulations with boundaries of contractible girth $d=2$ or $3$ (starting from our results for $d=1$), and the generating function of bipartite quadrangulations with boundaries, of contractible girth $d=4$ (starting from our results for $d=2$). 
But we do not know if this method can be extended further, or if a direct bijective approach would work for these classes of maps. 
Still, there may be hope to obtain bijective results for other values of $d$ for maps having at most 2 boundaries (to be investigated\ldots).

\vspace{.2cm}

\ni{\bf Acknowledgments.} The authors thank J\'er\'emie Bouttier and Juanjo Ru\'e for very interesting discussions. Olivier Bernardi was partially supported by NSF grant DMS-1400859. \'Eric Fusy was partly supported by the ANR grant ``Cartaplus'' 12-JS02-001-01 and the ANR grant ``EGOS'' 12-JS02-002-01. 

\bibliographystyle{plain}
\bibliography{mabiblio}

\begin{thebibliography}{10}

\bibitem{albenque2013generic}
M.~Albenque and D.~Poulalhon.
\newblock Generic method for bijections between blossoming trees and planar
  maps.
\newblock {\em Electron. J. Combin.}, 22(2):P2.38, 2015.

\bibitem{BeFu12}
O.~Bernardi and \'E. Fusy.
\newblock A bijection for triangulations, quadrangulations, pentagulations,
  etc.
\newblock {\em J. Combin. Theory Ser. A}, 119(1):218--244, 2012.

\bibitem{BeFu12b}
O.~Bernardi and \'E. Fusy.
\newblock Unified bijections for maps with prescribed degrees and girth.
\newblock {\em J. Combin. Theory Ser. A}, 119(6):1352--1387, 2012.

\bibitem{BeFu13}
O.~Bernardi and \'E. Fusy.
\newblock Unified bijections for planar hypermaps with general cycle-length
  constraints.
\newblock arXiv:1403.5371, 2014.

\bibitem{BoDFGu03}
J.~Bouttier, P.~Di Francesco, and E.~Guitter.
\newblock Geodesic distance in planar graphs.
\newblock {\em Nucl. Phys.}, B663:535--567, 2003.

\bibitem{bouttier2009distance}
J.~Bouttier and E.~Guitter.
\newblock Distance statistics in quadrangulations with a boundary, or with a
  self-avoiding loop.
\newblock {\em Journal of Physics A: Mathematical and Theoretical},
  42(46):465208, 2009.

\bibitem{Br65}
W.~G. Brown.
\newblock Enumeration of quadrangular dissections of the disk.
\newblock {\em Canad. J. Math.}, 17:302--317, 1965.

\bibitem{Br}
W.G. Brown.
\newblock Enumeration of triangulations of the disk.
\newblock {\em Proc. Lond. Math. Soc.}, pages 746--768, 1964.

\bibitem{FlaSe}
P.~Flajolet and R.~Sedgewick.
\newblock {\em Analytic combinatorics}.
\newblock Cambridge University Press, 2009.

\bibitem{Kri}
M.~Krikun.
\newblock Explicit enumeration of triangulations with multiple boundaries.
\newblock {\em Electronic J. Combin.}, v14 R61, 2007.

\bibitem{mathai1972enumeration}
AM~Mathai and PN~Rathie.
\newblock Enumeration of almost cubic maps.
\newblock {\em J. Combin. Theory Ser. B}, 13(1):83--90, 1972.

\bibitem{mullin1965counting}
R.C. Mullin.
\newblock On counting rooted triangular maps.
\newblock {\em Canad. J. Math}, 17(3):373--382, 1965.

\bibitem{PS03a}
D.~Poulalhon and G.~Schaeffer.
\newblock A bijection for triangulations of a polygon with interior points and
  multiple edges.
\newblock {\em Theoret. Comput. Sci.}, 307(2):385--401, 2003.

\bibitem{PS03b}
D.~Poulalhon and G.~Schaeffer.
\newblock Optimal coding and sampling of triangulations.
\newblock {\em Algorithmica}, 46(3-4):505--527, 2006.

\bibitem{Stan1999}
R.~P. Stanley.
\newblock {\em Enumerative combinatorics. Volume 2}.
\newblock Cambridge studies in advanced mathematics. Cambridge university
  press, Cambridge, New York, 1999.

\bibitem{staudacher1990yang}
M.~Staudacher.
\newblock The {Y}ang-{L}ee edge singularity on a dynamical planar random
  surface.
\newblock {\em Nuclear Physics B}, 336(3):349--362, 1990.

\bibitem{T62b}
W.~T. Tutte.
\newblock A census of slicings.
\newblock {\em Canad. J. Math.}, 14:708--722, 1962.

\end{thebibliography}

\end{document}